\numberwithin{equation}{section}    % Gleichungen mit Sections nummeriert
\theoremstyle{plain}
\newtheorem{Theorem}{Theorem}[section]
\newtheorem{Proposition}[Theorem]{Proposition}
\newtheorem{Corollary}[Theorem]{Corollary}
\newtheorem{Lemma}[Theorem]{Lemma}
\theoremstyle{definition}
\newtheorem{Definition}[Theorem]{Definition}
\newtheorem{Examples}[Theorem]{Examples}
\theoremstyle{remark}
\newtheorem{Remark}[Theorem]{Remark}
\newcommand{\one}{\mathbf{1}} % char. Funktion
\newcommand{\FF}{\mathbb{F}}
\newcommand{\RR}{\mathbb{R}}
\newcommand{\NN}{\mathbb{N}}
\newcommand{\ZZ}{\mathbb{Z}}
\newcommand{\cC}{\mathcal{C}}
\newcommand{\cF}{\mathcal{F}}
\newcommand{\cP}{\mathcal{P}}
\newcommand{\cD}{\mathcal{D}}
\newcommand{\cB}{\mathcal{B}}
\newcommand{\cS}{\mathcal{S}}
\newcommand{\cA}{\mathcal{A}}
\newcommand{\cZ}{\mathcal{Z}}
\newcommand{\cG}{\mathcal{G}}
\newcommand{\cN}{\mathcal{N}}
\newcommand{\cO}{\mathcal{O}}
\newcommand{\cK}{\mathcal{K}}
\newcommand{\cQ}{\mathcal{Q}}
\newcommand{\cR}{\mathcal{R}}
\newcommand{\cJ}{\mathcal{J}}
\newcommand{\cI}{\mathcal{I}}
\newcommand{\cT}{\mathcal{T}}
\newcommand\abs[1]{\left|#1\right|}
\newcommand\inn[1]{\left\langle #1 \right\rangle}
\newcommand\set[1]{\left\{{#1}\right\}}
\newcommand{\hm}[1]{\textbf{*}\leavevmode{\marginpar{\tiny%
$\hbox to 0mm{\hspace*{-0.5mm}$\leftarrow$\hss}%
\vcenter{\vrule depth 0.1mm height 0.1mm width \the\marginparwidth}%
\hbox to 0mm{\hss$\rightarrow$\hspace*{-0.5mm}}$\\\relax\raggedright #1}}}
\begin{document}

%opening
\title[The Shannon-McMillan-Breiman theorem for group actions]{The Shannon-McMillan-Breiman theorem \\ beyond amenable groups}

%\subjclass{999999 ?????????}
\keywords{Ergodic group actions, entropy, measured equivalence relations, 
hyperfiniteness.}

\date{\today}

%\author[Amos Nevo, Felix Pogorzelski]{Amos Nevo, Felix Pogorzelski}
\author{Amos Nevo}
\address{Technion, Israel Institute of Technology}
\email{anevo@tx.technion.ac.il}
%\address{Technion Campus Amado Building, 32000 Haifa, Israel}

\author{Felix Pogorzelski}
\address{University of Leipzig}
\email{felix.pogorzelski@math.uni-leipzig.de}
%\urladdr{http://www.analysis-lenz.uni-jena.de/Team/Felix+Pogorzelski.html}

\thanks{The first author acknowledges the support of  ISF Moked grant \# 2095/15. The second author gratefully acknowledges support through a Technion post-doctoral Fine fellowship}

%\begin{document}
\begin{abstract}
We introduce a new isomorphism-invariant notion of entropy for measure preserving actions of arbitrary countable groups on probability spaces, which we call orbital Rokhlin entropy.  It employs Danilenko's orbital approach to entropy of a partition,  and is motivated by  Seward's recent generalization of Rokhlin entropy from amenable to general groups. 
A key ingredient in our approach is the use of an auxiliary probability-measure-preserving hyperfinite equivalence relation. 
Under the assumption of ergodicity of the auxiliary equivalence relation, 
 our main result is a Shannon-McMillan-Breiman pointwise almost sure convergence theorem for the orbital entropy of partitions in measure-preserving group actions, the first such convergence result going beyond the realm of amenable groups. As a special case, we obtain a Shannon-McMillan-Breiman theorem for 
all strongly mixing actions of {\em any} countable group. Furthermore, we compare orbital Rokhlin entropy to Rokhlin entropy, and using an important recent result of Seward
we show that they coincide for free, ergodic actions of any countable group. 

Finally, we consider actions of non-Abelian free groups and demonstrate the geometric significance of the entropy equipartition property implied by the Shannon-McMillan-Breiman theorem.  We show that the orbital entropy of a partition is the limit of the information functions of the sequence of partitions arising from refining any given finite partition along almost every horoball in the group.   

%The class of groups in question consists of groups admitting an injective cocycle from a probability measure preserving amenable equivalence relation to the group in question, a condition that may hold for the class of all countable groups. 

\end{abstract}

\maketitle
%\begin{center}
%Preliminary version
%\end{center}

%\begin{center}
%{\bf  Address : } 
%Technion, Israel Institute of Technology \\
%Technion City, 32000 Haifa, Israel\\
%{\tt felixp@tx.technion.ac.il, anevo@tx.technion.ac.il}\\
%\end{center}

\tableofcontents
%{\bf Keywords:} entropy theory, ergodic actions, measured equivalence relations, 
%hyperfiniteness. \\ %-- Entropy. \\

%{Mathematics subject classification:} %\quad {05C25, 05C80, 46L60, 47A35}.

\section{Introduction and statement of main results}\label{sec:intro}
\subsection{Shannon-McMillan-Breiman theorem for free groups} 
 The main result of this paper is the  Shannon-McMillan-Breiman entropy equipartition theorem 
 %using the techniques class-bijective measured equivalence relations, 
 stated in Theorem~\ref{thm:MAIN_SMBhyp} below.  
% Here, our focus will be on p.m.p.\@ actions 
% of arbitrary countable groups which via group-valued cocycles over an auxiliary probability space gives rise to p.m.p.\@ actions of hyperfinite equivalence relations. In this way, one gets a Shannon-McMillan-Breiman theorem for a large class of countable groups.  
%We illustrate this fact by 
To explain our main result, let us begin by stating it in a special case, namely that of probability-measure-preserving (p.m.p.) actions of the free group $\FF_r$, $ 2\le r < \infty$. We  refer to section \ref{sec:freegroup} for full discussion and proof. We recall that the Cayley graph of $\FF_r$ with respect to free generators can be identified with a $2r$-valent tree, where the group elements are identified with its vertices. We recall the associated word-metric balls $B_n(e)$ centered at the identity $e$, the boundary $\partial \FF_r$ identified with the Cantor set of infinite reduced words, and the horoballs $B^\eta$ determined by a boundary point $\eta\in \partial \FF_{2r}$, passing at the identity $e$. The set $B_{2n}\cap B^\eta:=B_{2n}^\eta$ are called the horospherical balls determined by $\eta$, and they form a sequence of increasing finite sets, exhausting the horoball $B^\eta$. We can now state the following Shannon-McMillan-Breiman pointwise convergence theorem for actions of $\FF_r$ (See Subsection~\ref{sec:SMBtheorem} for all unexplained notions.)
% which is a verbal statement of Theorem~\ref{free gps2}.
 \begin{Theorem}\label{free gps} 
Let $(X,\mu)$ be an essentially free p.m.p. action of the free group $\FF_r$, $2\le r< \infty$. Assume that the action is ergodic under the index $2$ subgroup of even words $\FF_r^e$. Fix any finite partition $\cP$ of $X$. For a boundary point $\eta \in \partial \FF_r$, consider the sequence of partitions obtained from $\cP$ by refining it along the horospherical balls $B_{2k}^\eta$ determined by $\eta$, namely the partitions $\bigvee_{g\in B_{2k}^\eta}g \cP$. 
%$$\bigvee_{g\in B_{2k}^\eta}g \cP=\bigvee_{\xi\in \cR_k(\eta)}\alpha(\xi,\eta)^{-1}\cP\,.$$
Then the normalized information functions of the refined partitions converge to a constant value $h_\cP^\ast$, for almost every $x\in X$ and for almost every $\eta \in \FF_r$. 

The infimum of $h_\cP^\ast$ over all countable generating partitions of $X$ is an isomorphism-invariant of the action denoted $h^{\operatorname{OR}}\big(\FF_r \curvearrowright X\big)$, and will be called the orbital Rokhlin entropy.
 %for reasons that will be explained later on. 
 It coincides with the Rokhlin entropy and with the finitary entropy of the free group action.
\end{Theorem}

 \subsection{Historical background } 
 
 The concept of entropy for a measure-preserving transformation on a probability space was defined by Kolmogorov and Sinai, who introduced it as an isomorphism invariant for dynamical systems \cite{Ko58,Ko59,Si59}. For a fixed finite partition of the probability space Breiman \cite{Br57} proved pointwise almost-everywhere convergence of the information function associated with the partition, thus establishing asymptotic "entropy equipartition" under the transformation. Incorporating previous results of Shannon \cite{Sh48} and McMillan \cite{Mc53}, pointwise almost-everywhere entropy equipartition is usually referred to as the {\em Shannon-McMillan-Breiman theorem}. 
 
 The generalization of Kolmogorov-Sinai entropy to p.m.p.\@ actions of countable amenable groups was developed by several authors, including Kieffer \cite{Ki75}, Ollagnier \cite{Ol85} and Ornstein-Weiss \cite{OW87}. In particular, Ornstein-Weiss  raised the natural question of extending the entropy equipartition theorem to actions of amenable groups, and proved an analog of the Shannon-McMillan-Breiman theorem along regular 
 F{\o}lner sequences in amenable groups. For ergodic actions, this result was generalized by Lindenstrauss to  tempered F{\o}lner sequences \cite{Li01} (satisfying a very mild growth condition). The method of proof developed in \cite{Li01} motivate the techniques developed and used in the present paper. In the survey paper \cite{W03} Weiss gives a combinatorial proof in the case of possibly non-ergodic actions and for general tempered F{\o}lner sequences. 
 
 The development of entropy theory for p.m.p.\@ actions of non-amenable groups has been a long-standing open problem, which has seen revolutionary developments in the past decade, initiated by the breakthrough work on sofic entropy by Bowen \cite{Bo10c, Bo12}, and its refinements by  Kerr and Li \cite{KL11, KL13}. In a separate very important recent development, for ergodic p.m.p.\@ actions of general countable groups Seward introduced the notion of {\em Rokhlin entropy} \cite{Se15a,Se15b}. In \cite{Se16} Seward establishes a remarkable inequality which implies that Rokhlin entropy coincides with another notion of entropy, which we will call finitary entropy.  Seward's inequality will play a crucial role in our considerations below. 
 
 These developments naturally raise the question of the validity of the Shannon-McMillan-Breiman theorem in the far more general context of actions of non-amenable groups. Motivated by an approach developled by Danilenko and Park for actions of amenable groups \cite{Da01, DP02}, we define the {\em orbital entropy} for a partition, for any ergodic p.m.p.\@ action of any countable group. With these values at hand, we define the of concept {\em orbital Rokhlin entropy} of the underlying group action. We will show using Seward's inequality that it fact coincides with Rokhlin entropy for free actions, cf.\@ Proposition~\ref{thm:ENT}. Most importantly, for orbital entropy, we establish a general Shannon-McMillan-Breiman theorem, cf.\@ Theorem~\ref{thm:MAIN_SMBhyp}.

Let us now turn to describe the general context and the main results. 

\subsection{Probability measure preserving actions of groups and cocycle extensions}
%In the present section, we will briefly mention some key definitions, and then state our main results. A detailed discussion of all relevant concepts will appear in subsequent sections devoted to the proofs of the main results.  

Throughout the paper, we consider standard Borel equivalence relations with countable classes 
$\cR\subset Y\times Y$, where $(Y,\nu)$ is a probability space. $[y]=\cR(y)\subset Y$ denotes the $\cR$-class of $y\in Y$. We assume that the equivalence relation preserves the probability measure $\nu$. It is further assumed that $\cR$ is hyperfinite, or equivalently,  that $\cR$ is amenable in the sense of \cite{CFW81}.  
%and recall that this property was shown\cite{CFW81} that amenable
%equivalence relations are in fact hyperfinite,
Thus  $\cR$ can be 
written as an increasing union of equivalence subrelations $\cR_n\subset \cR$, each 
with finite classes, i.e.
\[
\cR(y) := \bigcup_{n=1}^{\infty} \cR_n(y),\,\text{ for $\nu$-almost every } y\in Y.
\] 
Such a representation of $\cR$ %as a union of a sequence of finite subrelations $(\cR_n)$
 will be called a {\em hyperfinite exhaustion}.  If for every $n$, the relations $\cR_n$ are bounded, namely 
 the size of the equivalence classes $\cR_n(y)$ is essentially bounded (with a bound depending on $n$), we will call the representation a 
 {\it bounded hyperfinite exhaustion}. Let  us note that hyperfinite relations always admit a bounded hyperfinite exhaustion, as will be seen below. In fact it is possible to construct a hyperfinite exhaustion satisfying $\abs{\cR_n(y)}\le n$ almost surely, as noted in \cite{W84} in a more general context. %\cite[p. 420]

Throughout the paper, $\Gamma$ denotes a countable group. 
The collection of all finite subsets of $\Gamma$ is denoted
by $Fin\big( \Gamma \big)$. We consider a probability measure preserving (p.m.p.) ergodic action 
$\Gamma \curvearrowright (X,\lambda)$, and aim to define a notion of entropy for the action.

To that end, assume that there is an amenable p.m.p.\@ equivalence relation $\cR$
over $(Y,\nu)$, admitting a measurable cocycle $\alpha : \cR\to \Gamma$. 
%which is linked to the group action via a cocycle. Precisely,
%Namely we are given a measurable map $\alpha: \cR \to \Gamma$ such that 
Thus for $\nu$-almost every $y \in Y$ and every $w,u,z \in [y]=\cR(y)$, 
the cocycle identity holds :
\[
\alpha\big(z,u\big) = \alpha\big(z,w \big)\cdot \alpha\big(w,u \big).
\]
A crucial construction in our discussion is the equivalence relation, denoted $\cR^X$, which is the extension of the equivalence relation $\cR$ by the cocycle $\alpha$ and the $\Gamma$-action on $X$. 
The {\em extended equivalence relation $\cR^X$}  over
$\big( X \times Y, \lambda \times \nu \big)$ is defined by the condition 
\[
\big( (x,y), (x^\prime,y^\prime) \big) \in \cR^X \Longleftrightarrow y\cR y^\prime \,\,\text{ and } x=\alpha(y,y^\prime)x^\prime
\,.\]
%Note that $\cR^X$ is defined equivalently by the condition
%\[
%\big( (x,y), (x^\prime,y^\prime) \big) \in \cR^X \Longleftrightarrow y\cR y^\prime \,\,\text {and } 
%\, \exists\, g \in\Gamma:\,
%g = \alpha(y,y^\prime), \,\, gx^\prime = x.
%\]

Assuming that the measure $\nu$ is $\cR$-invariant, it follows that $ \lambda \times \nu$ is $\cR^X$-invariant,
since the $\Gamma$-action on $X$ preserves $\lambda$. The projection map $\pi : \cR^X\to \cR$ given by $(x,y)\to y$ is {\it class injective}, namely injective on almost every $\cR^X$-equivalence class.

Further, it 
is well known that an extension of an amenable action is amenable, and thus in particular if $\cR$ is amenable, so is $\cR^X$. Thus, when $\cR$ is hyperfinite, so is $\cR^X$.  But since the extension is class-injective, in fact every hyperfinite exhaustion $(\cR_n)$ of $\cR$ can be canonically lifted to a hyperfinite exhaustion $(\cR_n^X)$ of $\cR^X$, via $\cR_n^X((x,y))=\set{(\alpha(z,y)x,z)\,;\, z\in \cR_n(y)}$. Note that if $(\cR_n)$ is a bounded hyperfinite exhuastion, then so is $(\cR_n^X)$, with the same bounds on the equivalence classes.

The cocycle $\alpha$ is called {\it class  
injective} if for $\nu$-almost every $y \in Y$, 
we have that $\alpha(y,z) \neq \alpha(y,w)$ whenever $w\neq z$.
%So $(y,z)$ shall be contained in $\cR$ if and only if there is 
%some $g \in \Gamma$ such that $g = \alpha(z,y)$. 
In order to avoid degenerate cases, we will assume in the
sequel that the cocycle under consideration is class injective.

\subsection{Definition of orbital Rokhlin entropy}

In order to define the measure theoretic orbital entropy of the p.m.p.\@ $\Gamma$-action on $X$, let us begin by recalling the following construction, due to Danilenko \cite{Da01}. First, for a 
countable measurable partition 
$\cP = \{A_i\,;\,i\in I\}$ of $X$ the Shannon entropy $H(\cP)$ 
is defined as
\[
H\big( \cP \big):= - \sum_{A \in \cP} \lambda(A)\,\log\big( \lambda(A) \big),
\] 
where we use the convention that $0\cdot \log\,0 = 0$.  
For two countable partitions $\cP$ and $\cQ$, their common refinement
is $\cP \vee \cQ:= \big\{ P \cap Q\,|\, P \in \cP,\, Q\in \cQ \}$. 
%This leads to the definition of refinements induced by a finite subset 
%of $\Gamma$. Precisely, for a countable partition
%$\cP$ and 
For a finite set $F \in Fin(\Gamma)$, we set
\[
\cP^F := \bigvee_{g \in F} g^{-1}\,\cP,
\] 
where $g^{-1}\cP=\set{g^{-1}A_i\,;\, i\in I}$. If $F$ is the empty set, then we define $\cP^F$ to be the trivial partition, which of course has Shannon entropy zero. 
% means the obvious, namely shifting every set $P \in \cP$by $g^{-1}$. 
Given two partitions $\cP$ and $\cQ$,  
$\cQ$ is called {\em finer than} $\cP$ or a {\em refinement of} $\cP$, denoted 
$\cQ \geq \cP$, if for every $Q \in \cQ$, there is some $P \in \cP$ such 
that $Q \subseteq P$ up to $\lambda$-measure zero.

Now for a subset function $\cF: Y \to Fin(Y)$ (as always, with $\cF(y)\subset \cR(y)$ a.e.) and a countable 
partition $\cP$ of $X$ with $H(\cP) < \infty$, we consider the entropy function, defined by Danilenko in \cite{Da01} : 

%The first author acknowledges the  support of an ISF grant. The second author gratefully acknowledges support through a Technion Fine fellowship. 
 
\[
h^{\cP}(\cF): Y \to [0,\infty): h^{\cP}(\cF)(y) := 
H\Big( \bigvee_{z \in \cF(y)} \alpha(z,y)^{-1}\,\cP \Big).
\]

%We can now state our first main theorem which shows that we can attach 
In was noted by Danilenko in \cite[Cor. 2.7]{Da01} that one can attach a notion of orbital entropy to every partition $\cP$ with 
finite Shannon entropy, and that the following holds.  

\begin{Theorem}[see {\cite[Cor. 2.7]{Da01}} and {\cite[Prop. 1.2]{DP02}}]  \label{thm:MAINcocycleentropy}
Given a class injective cocycle on $\cR$,   
for every countable partition $\cP$ of $X$ with $H(\cP) < \infty$, there is a number $h_{\cP}^{*}(\alpha)$
such that for every bounded hyperfinite exhaustion $(\cR_n)$,
\begin{eqnarray*}
h_{\cP}^{*}(\alpha) := \lim_{n \to \infty} \int_Y \frac{h^{\cP}\big( \cR_n \big)(y)}{\big| \cR_n(y)\big|}\, d\nu(y).
\end{eqnarray*} 
\end{Theorem}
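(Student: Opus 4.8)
The plan is to isolate two purely structural features of the entropy function $h^{\cP}$ and then run a monotonicity argument followed by a boundary estimate, arranged so that one never has to leave the category of finite subrelations; this is what lets the Ornstein--Weiss quasi-tiling machinery be bypassed. First I would record the two features. \emph{Subadditivity:} whenever a finite set $\cF(y)$ is broken into blocks $A_1,\dots,A_k$, subadditivity of Shannon entropy gives $h^{\cP}(\cF)(y)=H\big(\bigvee_{z\in\cF(y)}\alpha(z,y)^{-1}\cP\big)\le\sum_j H\big(\bigvee_{z\in A_j}\alpha(z,y)^{-1}\cP\big)$. \emph{Invariance:} writing $\alpha(z,y)^{-1}=\alpha(y,z)$, the cocycle identity $\alpha(y',z)=\alpha(y',y)\alpha(y,z)$ together with $\Gamma$-invariance of $H$ (the action preserves $\lambda$) shows that the entropy $H\big(\bigvee_{z\in C}\alpha(y,z)\cP\big)$ attached to a finite set $C$ inside one $\cR$-class depends only on $C$, not on the base point $y$. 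In particular $g_n:=h^{\cP}(\cR_n)$ is constant on each $\cR_n$-class, and $0\le h^{\cP}(\cR_n)(y)\le |\cR_n(y)|\,H(\cP)$, so the averages $a_n:=\int_Y h^{\cP}(\cR_n)(y)/|\cR_n(y)|\,d\nu(y)$ lie in $[0,H(\cP)]$.

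For existence along a fixed exhaustion, I would exploit that for $n\le m$ the $\cR_n$-classes exactly partition each $\cR_m$-class --- the clean, overlap-free ``tiling'' that hyperfiniteness supplies automatically. Applying subadditivity to this partition and evaluating each block $C$ by a representative (using $\cR_n$-invariance of $g_n$ and $|\cR_n(z)|=|C|$) yields the pointwise bound
\[
h^{\cP}(\cR_m)(y)\ \le\ \sum_{z\in\cR_m(y)}\frac{g_n(z)}{|\cR_n(z)|}.
\]
Dividing by $|\cR_m(y)|$ makes the right-hand side the $\cR_m$-conditional average of $g_n/|\cR_n|$, and the mass-transport identity $\int_Y |\cR_m(y)|^{-1}\sum_{z\in\cR_m(y)}f(z)\,d\nu(y)=\int_Y f\,d\nu$ (valid since $\nu$ is $\cR_m$-invariant with finite classes) gives $a_m\le a_n$. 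Thus $(a_n)_n$ is non-increasing and bounded below, hence convergent.

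The genuine content is independence of the exhaustion, and this is the step I expect to be the main obstacle. Given a second bounded hyperfinite exhaustion $(\cS_m)$, I would fix $n$ and split each $\cS_m(y)$ into those $\cR_n$-classes lying entirely inside it plus a boundary set $B_m(y)$ of points whose $\cR_n$-class escapes $\cS_m(y)$. Subadditivity over this partition, the same representative/mass-transport bookkeeping on the interior, and the crude bound $H(\cP)$ per boundary point produce $a_m(\cS)\le a_n(\cR)+H(\cP)\int_Y |B_m(y)|/|\cS_m(y)|\,d\nu$. Everything now rests on the boundary term. Since each boundary point has at least one escaping $\cR_n$-partner, $|B_m(y)|\le\sum_{w\in\cS_m(y)}|\cR_n(w)\setminus\cS_m(y)|$, so after mass transport the boundary average is dominated by $\int_Y |\cR_n(y)\setminus\cS_m(y)|\,d\nu(y)$. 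Because $\cR_n\subseteq\cR=\bigcup_m\cS_m$ and $\cR_n$ has uniformly bounded classes, $|\cR_n(y)\setminus\cS_m(y)|\downarrow 0$ pointwise under a fixed bound, so dominated convergence sends it to $0$ as $m\to\infty$. Hence $\limsup_m a_m(\cS)\le a_n(\cR)$; letting $n\to\infty$ and invoking the monotonicity gives $\limsup_m a_m(\cS)\le\lim_n a_n(\cR)$, and the symmetric argument yields the reverse inequality, so the two limits coincide and define $h^{*}_{\cP}(\alpha)$.

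In effect this realizes the integrated Ornstein--Weiss principle (Theorem~\ref{thm:INT}) for the subadditive invariant function $h^{\cP}$: what in the amenable \emph{group} setting requires a Følner hypothesis and a delicate quasi-tiling with controlled overlaps is here replaced by the exact partitioning of nested finite subrelations together with a one-line dominated-convergence boundary estimate. The places where the hypotheses are essential are precisely those I would flag in the write-up: $\nu$-invariance of $\cR$ for the mass-transport identity, and boundedness of the exhaustion ($|\cR_n(y)|\le K_n$) to secure the dominating function in the boundary estimate.
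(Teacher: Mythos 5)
Your proposal is correct, and it reaches the conclusion by a route that is organized differently from the paper's, even though the underlying toolkit is the same (subadditivity of $H$, cocycle invariance via Proposition~\ref{prop:propertiesofH}~(iv), the mass-transport identity coming from $\cR$-invariance of $\nu$, and dominated convergence). The paper abstracts $h^{\cP}$ into a general class of subadditive functions (Definition~\ref{defi:subadditive}, Proposition~\ref{prop:hpsubadditive}) and proves the stronger Theorem~\ref{thm:INT}: the limit along \emph{any} bounded hyperfinite exhaustion equals the infimum of $\int_Y \mathfrak{H}(\cT)(y)/|\cT(y)|\,d\nu(y)$ over \emph{all} non-trivial bounded sub-equivalence relations $\cT\subseteq\cR$; exhaustion-independence is then automatic, since the limit is identified with an exhaustion-free quantity. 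The smallness of the ``bad'' part of $\cR_n(y)$ relative to $\cT$ is handled there pointwise on a large set via Lemma~\ref{lemma:hyperfinitefolner}~(ii), i.e.\@ with an exceptional set produced by Markov's inequality. You instead (a) exploit the exact nesting $\cR_n\subseteq\cR_m$ to get monotonicity $a_m\le a_n$ with no error term at all --- a step the paper never isolates --- and (b) prove exhaustion-independence by a symmetric pairwise comparison of two exhaustions, controlling the boundary term in integrated form by $\int_Y|\cR_n(y)\setminus\cS_m(y)|\,d\nu(y)\to 0$, which is a one-shot mass-transport-plus-dominated-convergence estimate replacing the exceptional-set bookkeeping of Lemma~\ref{lemma:hyperfinitefolner}. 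What the paper's formulation buys is the variational (infimum) characterization of the limit and a statement applying to arbitrary bounded subrelations and arbitrary subadditive functions, which the authors advertise as independently useful (e.g.\@ for topological entropy); what yours buys is a leaner, self-contained argument plus monotonicity of the approximants. Two points you should still write out to make the argument airtight, both handled in Proposition~\ref{prop:hpsubadditive} of the paper: the measurability of $y\mapsto h^{\cP}(\cR_n)(y)$ and of $y\mapsto|\cR_n(y)\setminus\cS_m(y)|$ (so that your integrals $a_n$ and boundary terms are defined), and the measurable choice of representatives for the $\cR_n$-classes in your decompositions (available because a finite-class Borel relation admits measurable sections).
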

Let us highlight the crucial fact that it is part of the conclusion of Theorem \ref{thm:MAINcocycleentropy} that the limit is independent of the choice of the bounded hyperfinite exhaustion.
\begin{Remark}
As can be seen from the proof of Theorem \ref{thm:MAINcocycleentropy}, the  
assumption that the cocycle being class injective is not necessary for this convergence result. 
 But in our discussion below we will dispense
with this additional generality and focus on the set-up explained above of free p.m.p. actions of countable groups. The injectivity assumption then makes sure that the values $h_{\cP}^{*}(\alpha)$ accurately reflect entropy theoretic information regarding the action of $\Gamma$ on $X$. 

Furthermore, the results we develop below can be formulated more generally, for all class-injective extensiosn of arbitrary hyperfinite relations. But again, we will focus below on our main goal, which is the development of entropy theory for p.m.p. actions of non-amenable groups.
\end{Remark}

Following \cite{Da01}, the number $h_{\cP}^{*}(\alpha)$ shall be called {\em the orbital entropy} of the
partition $\cP$ for the action $\Gamma \curvearrowright (X,\lambda)$. Danilenko \cite[Def. 2.4]{Da01} proceeds to define the orbital entropy of the associated dynamical systems as the supremum of $h_{\cP}^{*}(\alpha)$ over all finite partitions.

We propose a different definition, motivated by the developments in entropy theory for non-amenable groups due to Bowen and to Seward, particularly by Seward's recent work on Rokhlin entropy (see the discussion below). To define  {\it measure-theoretic entropy for the dynamical system} $\Gamma \curvearrowright (X,\lambda)$ itself, we restrict ourselves to {\em generating partitions},
i.e.\@ countable partitions $\cP$ for $X$ such that (modulo null sets) 
\[
\bigvee_{g \in \Gamma} g \, \cP = \cB\,.
\]
%where $A \circeq B$ if $\lambda(A \triangle B) =0$.  
%We now define :
% the {\it  cocycle entropy} for the dynamical system  $\Gamma \curvearrowright (X,\lambda)$, as follows.

\begin{Definition}[Orbital Rokhlin entropy]
 Let $(X,\lambda)$ be a p.m.p.\@ ergodic action of $\Gamma$.
Let $\alpha: \cR \to \Gamma$ be a class injective cocycle defined on a hyperfinite relation. The number 
\begin{eqnarray*}
h^{\operatorname{OR}}(\Gamma \curvearrowright X) := \inf\big\{h^{*}_{\cP}(\alpha)\,| \, \cP \mbox{ countable, generating partition}\big\}
\end{eqnarray*}
is called the {\em orbital Rokhlin entropy} for the action $\Gamma \curvearrowright (X,\lambda)$ and for the injective cocycle $\alpha: \cR \to \Gamma$.  
%(As the abbreviations indicate, the infimum is taken over all countable, generating partions.)
\end{Definition}

%This is a new concept for attaching an entropy value to p.m.p.\@ group actions.
As we shall see presently, the orbital Rokhlin entropy of a free ergodic action is an intrinsic invariant, independent of the p.m.p.\@ hyperfinite relation $\cR$, the bounded hyperfinite exhaustion $\cR_n$, and the class injective cocycle $\alpha$ used to define it. 
%equivalence was defined using an class injective cocycle $\alpha$ defined on an equivalence relation $\cR$. But in fact, the same value  is obtained  for {\em any} class  
%injective coycle on a hyperfinite p.m.p. equivalence relation. 
This remarkable fact is ultimately based on the important recent result established by Seward relating two completely different notions of entropy, which we now proceed to describe.

\subsection{Orbital Rokhlin entropy, finitary entropy and Rokhlin entropy}

The notion of Rokhlin entropy has its origin in Rokhlin's studies of entropy for a single transformation. It has recently been greatly generalized and studied intensively
in the context of general group actions by Seward in the series of papers \cite{Se15a, Se15b, Se16}. 
In particular, Seward defines  
\begin{Definition}[Rokhlin entropy]
Let $\Gamma \curvearrowright (X,\lambda)$ be a p.m.p.\@ ergodic group action. Then, the number
\[
h^{\operatorname{Rok}}(\Gamma \curvearrowright X) := \inf\big\{ H(\cP)\,|\, 
\cP \mbox{ countable, generating partition}\big\}
\]
is called the {\em Rokhlin entropy} of the group action. 
\end{Definition}

Seward and Tucker-Drob showed in \cite{ST12}
that for free ergodic actions of amenable groups, Rokhlin entropy coincides
with the classical Kolmogorov-Sinai entropy. 
In a recent important breakthrough  Seward \cite{Se16} established the following upper bound for the Rokhlin entropy for every countable, generating partition $\cP$, assuming the $\Gamma$ action on $X$ is essentially free. 
\begin{eqnarray}\label{thm:seward}
h^{\operatorname{Rok}}\big( \Gamma \curvearrowright X \big) \leq 
\inf_{T \in Fin(\Gamma)} \frac{1}{|T|}\, H\Big( \bigvee_{g \in T} g^{-1}\cP \Big).
\end{eqnarray}
(In fact, Theorem~1.5 in \cite{Se16} shows a stronger statement
involving entropy conditioned on $\Gamma$-invariant $\sigma$-algebras.)

Let us now note that by the standard subadditivity property of Shannon entropy,  
%(see Proposition~\ref{prop:propertiesofH}~(iii) below), 
the right hand side of (\ref{thm:seward}) is bounded from above by $H(\cP)$. 
Hence, by passing to the infimum over all generating partitions, we obtain 
in fact equality of the lower and the upper bound. Let us therefore define the following notion of {\it finitary entropy}.

\begin{Definition}[Finitary entropy]
%We define
%{\em finitary entropy} as
\[
h^{\operatorname{fin}}\big( \Gamma \curvearrowright X \big)
:= \inf\Big\{ \inf_{T \in Fin(\Gamma)} H\Big( \bigvee_{g \in T} g^{-1}\cP \Big) /|T| \, \,\Big|\, 
\cP \mbox{ countable, generating partition}\Big\}.
\]
\end{Definition}
By the preceding discussion, it follows from Seward's inequality (\ref{thm:seward}) that these two notions of entropy coincide for ergodic (essentially) free actions: $h^{\operatorname{fin}}\big( \Gamma \curvearrowright X \big)=h^{\operatorname{Rok}}(\Gamma \curvearrowright X)$.

Using this result, it is easy to see that our definition of orbital Rokhlin 
entropy above gives rise to the same value as well.

\begin{Proposition} \label{thm:ENT}
Assume that $\Gamma \curvearrowright (X,\lambda)$ is an ergodic essentially free p.m.p.\@ action. Then,
for every amenable p.m.p.\@ equivalence relation $\cR$ over $(Y, \nu)$ and every class injective cocycle 
$\alpha: \cR \to \Gamma$, we obtain
\begin{eqnarray*}
h^{\operatorname{fin}}\big( \Gamma \curvearrowright X \big)=h^{\operatorname{OR}}\big(\Gamma \curvearrowright X, \alpha\big) = h^{\operatorname{Rok}}\big(\Gamma \curvearrowright X\big).
\end{eqnarray*}
\end{Proposition}

\begin{proof}
Let $\cP$ be a countable partition with finite Shannon entropy. Assume that $(\cR_n)$
is any bounded hyperfinite exhaustion for $\cR$. Since the 
cocycle $\alpha$ is class injective, and $\cR_n(y)$ is a finite set almost surely, we obtain 
\begin{eqnarray*}
\inf_{T \in Fin(\Gamma)} \frac{1}{|T|}\,H\Big( \bigvee_{g \in T} g^{-1}\cP \Big)
\leq \frac{h^{\cP}\big( \cR_n \big)(y)}{\big| \cR_n(y) \big|} \leq H\big( \cP \big)\,,
\end{eqnarray*}
for each $n \in \NN$ and for $\nu$-almost every $y \in Y$.
Note that these inequalities remain valid even if $H(\cP) = \infty$.
In the case that there exist generating partitions with finite Shannon 
entropy, we 
integrate over $Y$ and pass to the limit as $n\to \infty$. By Theorem \ref{thm:MAINcocycleentropy} we conclude that $ h^{\operatorname{fin}}(\Gamma \curvearrowright X)\le h^\ast_\cP(\alpha)\le H(\cP)$
independently of the cocycle $\alpha$. Now taking the infimum over all generating partitions,  since by the discussion above % by inequality~\eqref{thm:seward} 
$h^{\operatorname{Rok}}(\Gamma \curvearrowright X) = h^{\operatorname{fin}}(\Gamma \curvearrowright X)$, the above inequality immediately implies equality of all three notions of entropy. 

\end{proof}

\subsection{The Shannon-McMillan-Breiman theorem} \label{sec:SMBtheorem}
The generalization that we propose of the classical Shannon-McMillan-Breiman theorem concerns pointwise almost everywhere convergence of a sequence of natural {\em information functions} on $X$, the space on which $\Gamma$ acts by measure-preserving transformations. For
a given countable partition $\cP$ of $X$ with $H(\cP) < \infty$, we set
\[
\cJ(\cP)(x):= - \log\lambda\big( \cP(x) \big),
\] 
where we define $\cP(x)$ to be the unique set $A \in \cP$ containing 
the point $x$ (namely the $\cP$-{\em name of} $x$).
Note that by definition, we have 
\[
H\big( \cP \big) = \int_X \cJ\big( \cP \big)(x)\,d\lambda(x)\,.
\]
%and this relation also shows that the information function is 
%well defined almost surely. \\
Given an ergodic
p.m.p.\@ group action $\Gamma \curvearrowright (X,\lambda)$, and a class-injective cocycle $\alpha : \cR \to \Gamma$ consider the 
extended relation $\cR^X$ over $X \times Y$. 
% is defined as
%\[
%\big( (x,y), (t,z) \big) \in \cR^X \Longleftrightarrow \, \exists\, g \in\Gamma:\,
%g = \alpha(z,y), \,\, gx = t.
%\]
%Note that $\cR^X$ is measure preserving and hyperfinite as well, cf.\@ SOURCE::::.
For the proof of the Shannon-McMillan-Breiman theorem, we will 
assume that the extended relation $\cR^X$ is {\em ergodic}. 
Though being a non-trivial assumption on the 
equivalence relation and the action under consideration, ergodicity of the extension is satisfied in 
many situations. One important example is that of arbitrary ergodic actions of irreducible lattices in connected semisimple Lie groups with finite center. In general, a sufficient condition in order
to guarantee ergodicity of $\cR^X$ is {\em weak mixing} of the relation
$\cR$ over $(Y,\nu)$, as defined in \cite{BN13a}.
  For a more detailed elaboration
of these issues, we refer the reader to Section  7. %~\ref{sec:SMB}. 

We are now
able to state our main result, namely the Shannon-McMillan-Breiman pointwise convergence theorem for actions of non-amenable groups.

\begin{Theorem}%[SMB-Theorem for hyperfinite equivalence relations] 
\label{thm:MAIN_SMBhyp}
Let $\Gamma \curvearrowright (X,\lambda)$ be an ergodic essentially free p.m.p.\@ action. 
%Let $\Gamma$ be a countable group which acts on a probability space $(X,\lambda)$
%by p.m.p. 
Assume that $\cR$ is an amenable, p.m.p.\@ equivalence
relation, and $\alpha : \cR\to \Gamma$ is a class injective cocycle, such that the extended relation $\cR^X$ is ergodic. 
%over a probability space $(Y,\nu)$ which gives rise to a measurable,
%fiberwise injective
%cocycle $\alpha: \cR \to \Gamma$ and such that the extended relation $\cR^X$ is ergodic. \\
Then, for every bounded hyperfinite exhaustion $(\cR_n)$ satisfying the growth condition
\[
\lim_{n \to \infty} \operatorname{ess}\,\operatorname{inf}_y |\cR_n(y)|/\log\,n = \infty 
\]
the information functions satisfy the following convergence property. 
Given a finite partition $\cP$ of $X$, for $(\lambda \times \nu)$-almost every $(x,y) \in X \times Y$, 
\[
\lim_{n \to \infty} \frac{\cJ\big( \cP^{\cR_n(y)}(x) \big)}{|\cR_n(y)|} = h^{*}_{\cP}(\alpha),
\]
where $h^{*}_{\cP}(\alpha)$ is the orbital entropy of the partition
$\cP$. 
\end{Theorem}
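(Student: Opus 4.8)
\emph{Reduction and the ergodic theorem.} The plan is to sandwich $I_n(x,y):=\cJ(\cP^{\cR_n(y)})(x)/\abs{\cR_n(y)}$ between matching upper and lower bounds. Since $\int_X \cJ(\cP^{\cR_n(y)})(x)\,d\lambda = H(\cP^{\cR_n(y)}) = h^{\cP}(\cR_n)(y)$, Theorem~\ref{thm:MAINcocycleentropy} already furnishes the mean convergence $\int_{X\times Y} I_n\,d(\lambda\times\nu)\to h^{*}_{\cP}(\alpha)$, so the task is to upgrade this to pointwise convergence. First I would record the pointwise ergodic theorem I intend to use: the canonical lift $(\cR_n^X)$ is a bounded hyperfinite exhaustion of the \emph{ergodic} relation $\cR^X$, so the $\sigma$-algebras of $\cR_n^X$-invariant sets decrease to the trivial one, and reverse martingale convergence (equivalently the pointwise theorem of \cite{BN13b}) gives, for every $f\in L^1(X\times Y)$ and a.e.\ $(x,y)$,
\[
\frac{1}{\abs{\cR_n^X((x,y))}}\sum_{(x',y')\in\cR_n^X((x,y))} f(x',y') \longrightarrow \int_{X\times Y} f\,d(\lambda\times\nu).
\]
It then suffices to show $\limsup_n I_n \le h^{*}_{\cP}(\alpha)$ and $\liminf_n I_n \ge h^{*}_{\cP}(\alpha)$ almost everywhere.

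\emph{The upper bound.} Fix a scale $m$ and put $g_m(x,y):=\cJ(\cP^{\cR_m(y)})(x)/\abs{\cR_m(y)}$, so that $\int g_m\,d(\lambda\times\nu)$ converges to $h^{*}_{\cP}(\alpha)$ as $m\to\infty$. For $n>m$ the inclusion $\cR_m\subset\cR_n$ tiles each class \emph{exactly}, $\cR_n(y)=\bigsqcup_i \cR_m(z_i)$, and the cocycle identity identifies the atom $\cP^{\cR_n(y)}(x)$ with the joint record of the $\cP$-names of $\set{\alpha(w,y)x : w\in\cR_n(y)}$, which splits along the tiles. Evaluating $g_m$ at the shifted class points $(\alpha(z_i,y)x,z_i)\in\cR_n^X((x,y))$ reproduces the individual tile informations $\cJ(\cP^{\cR_m(z_i)})(\alpha(z_i,y)x)$, and the ergodic theorem above shows that their normalized sum converges to $\int g_m$. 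The step that carries the proof is the \emph{pointwise, almost sure subadditivity}
\[
\cJ(\cP^{\cR_n(y)})(x)\ \le\ \sum_i \cJ(\cP^{\cR_m(z_i)})(\alpha(z_i,y)x)\ +\ o(\abs{\cR_n(y)}),
\]
that is, the control of the aggregate pointwise mutual information between each tile and its predecessors. This is exactly where I would invoke the abstract covering Lemma~\ref{lemma:abstrcomb}: the exact tiling supplied by the hyperfinite tower replaces the random tilings and overlap estimates of \cite{Li01}, reducing the defect to a term that the ergodic theorem annihilates. Granting this, $\limsup_n I_n \le \int g_m$ a.e., and letting $m\to\infty$ yields $\limsup_n I_n \le h^{*}_{\cP}(\alpha)$.

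\emph{The lower bound and the a.s.\ upgrade.} For the complementary bound I would fix $\delta>0$ and estimate the measure of the set where atoms are oversized, $I_n < h^{*}_{\cP}(\alpha)-\delta$, i.e.\ $\lambda(\cP^{\cR_n(y)}(x))>\exp(-(h^{*}_{\cP}(\alpha)-\delta)\abs{\cR_n(y)})$. Counting shows there are at most $\exp((h^{*}_{\cP}(\alpha)-\delta)\abs{\cR_n(y)})$ such atoms; factoring the tilted sum $\sum_A \lambda(A)^{1+t}$ along the exact tiling and comparing with the tile scale gives a large-deviation estimate, so that $(\lambda\times\nu)(I_n<h^{*}_{\cP}(\alpha)-\delta)$ decays exponentially in $\operatorname{ess\,inf}_y\abs{\cR_n(y)}$. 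At this point the growth hypothesis $\abs{\cR_n(y)}/\log n\to\infty$ enters decisively: it makes these measures summable in $n$, so Borel--Cantelli forces $\liminf_n I_n \ge h^{*}_{\cP}(\alpha)-\delta$ a.e.; letting $\delta\downarrow 0$ and combining with the upper bound gives $I_n\to h^{*}_{\cP}(\alpha)$ almost everywhere. The same estimates yield the $L^1$ statement of Corollary~\ref{cor:MAIN_SML1}.

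\emph{Main obstacle.} The crux is the pointwise subadditivity in the upper bound: unlike the subadditivity of Shannon entropy, the corresponding inequality for information functions holds only in mean, and its pointwise version costs an error term governed by the correlations between tiles. Establishing that this error is $o(\abs{\cR_n(y)})$ almost surely---via Lemma~\ref{lemma:abstrcomb} and the exact tower tiling---is the technical heart of the argument and the place where the adaptation of the strategy of \cite{Li01} to amenable equivalence relations genuinely takes place.
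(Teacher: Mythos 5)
Your proposal contains two genuine gaps, and both sit exactly at the points you yourself flag as the ``technical heart.'' First, the upper bound. You base it on the pointwise almost-sure subadditivity
\[
\cJ\big(\cP^{\cR_n(y)}\big)(x)\ \le\ \sum_i \cJ\big(\cP^{\cR_m(z_i)}\big)\big(\alpha(z_i,y)x\big)\ +\ o\big(\abs{\cR_n(y)}\big),
\]
and propose to control the error by Lemma~\ref{lemma:abstrcomb}. This cannot work: information functions are subadditive only in expectation (that is Shannon entropy subadditivity, Proposition~\ref{prop:propertiesofH}); pointwise, the inequality amounts to a quantitative near-independence of the atoms sitting on different tiles, $\lambda\big(\bigcap_i A_i\big)\ge e^{-o(\abs{\cR_n(y)})}\prod_i\lambda(A_i)$, which is false in general and cannot be extracted from Lemma~\ref{lemma:abstrcomb} --- that lemma is a purely combinatorial statement about extracting disjoint subcollections of classes from a subset-function array, and it carries no information about measures of atoms or correlations between tiles. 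The paper's proof never compares $\cJ(\cP^{\cR_n(y)})(x)$ with a sum of tile informations. Instead it sets $h:=\operatorname{ess\,inf}_{x,y}\liminf_n \cJ(\cP^{\cR_n(y)}(x))/\abs{\cR_n(y)}$ and proceeds by covering plus counting: ergodicity of $\cR^X$, the ergodic theorem, and Lemma~\ref{lemma:abstrcomb} are combined (Lemmas~\ref{lemma:dyncov1} and~\ref{lemma:dyncov2}) to cover a $(1-\delta)$-fraction of $\cR_n(y)$ by tiles $\cR_{k_i}(b_i)$ of \emph{varying} scales on which the shifted normalized information is already $\le h+\delta$; then one counts the atoms of $\cP^{\cR_n(y)}$ compatible with such a configuration (at most $2^{(h+\delta)\abs{C}}$ per tile, $\abs{\cP}^{\delta\abs{\cR_n(y)}}$ for the residual set, and at most $2^{\delta\abs{\cR_n(y)}}$ possible tilings by Lemma~\ref{lemma:comb}); finally Borel--Cantelli, powered by the growth condition, shows that atoms which are both ``assemblable'' and too small have summable total measure. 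The counting replaces the pointwise subadditivity you would need.

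Second, the lower bound. Your large-deviation claim --- that $(\lambda\times\nu)\big(I_n<h^{*}_{\cP}(\alpha)-\delta\big)$ decays exponentially in $\operatorname{ess\,inf}_y\abs{\cR_n(y)}$ --- is unsupported and false in general (SMB convergence can be arbitrarily slow outside i.i.d.-type settings). Moreover the counting logic is inverted: bounding the \emph{number} of oversized atoms says nothing about the measure of their union, since a few atoms each of large measure can cover most of the space; counting only controls measure when combined with an upper bound on the size of each atom, which is exactly how the paper uses it in the limsup direction. In the paper the liminf bound costs nothing: since $h$ is defined as the essential infimum of the liminf, $\liminf_n I_n\ge h$ holds a.e.\ by definition, and the identification $h=h^{*}_{\cP}(\alpha)$ is made only at the very end, from the mean convergence supplied by Theorem~\ref{thm:MAINcocycleentropy}. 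So the correct architecture is: hard covering/counting argument for the limsup against $h$, trivial liminf against $h$, and mean convergence to pin $h$ to the cocycle entropy --- not a two-sided sandwich against $h^{*}_{\cP}(\alpha)$ directly.
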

Given {\it any } countable group $\Gamma$, a suitable equivalence relation $\cR$ and cocycle $\alpha : \cR \to \Gamma$ can be found, such that Theorem 
\ref{thm:MAIN_SMBhyp} applies to {\it all} strongly mixing action of $\Gamma$. We will discuss this matter in detail in  \S \ref{kechris-Vaes} below. 

Let us note the following comments regarding the assumptions of Theorem \ref{thm:MAIN_SMBhyp}.

1)  the growth condition on the $(\cF_n)$ is very mild, and in fact one can always find bounded hyperfinite exhaustions
which satisfy it. To see this, recall that any two ergodic p.m.p.\@ actions of amenable
groups are orbit equivalent. This fact goes back to Dye \cite{Dy59} for a pair of (ergodic) 
$\ZZ$-actions and was stated in full generality in \cite{OW80}. For a survey on the
topic of orbit-equivalence, see also \cite{Ga00}. Now in \cite{CFW81} it was shown that any amenable equivalence relation is hyperfinite and any hyperfinite relation is generated by the action of a single transformation. This action is orbit equivalent to the standard odometer action, and we can transfer the  hyperfinite exhaustion of the odometer to the underlying equivalence relation (using the orbit equivalence). It follows that bounded hyperfinite
exhaustions satisfying the growth condition required in Theorem~\ref{thm:MAIN_SMBhyp} always exist. 

2) The proof of Theorem~\ref{thm:MAIN_SMBhyp} is inspired by Lindenstrauss' proof of the Shannon-McMillan-Breiman theorem for amenable groups.
As such, the analogous growth condition for (tempered) F{\o}lner sequences appears there as well, cf.\@ 
Theorem~1.3 in~\cite{Li01}.
In the survey paper \cite{W03} Weiss gives a deterministic combinatorial proof of the Shannon-McMillan-Breiman theorem based on previous joint work of Ornstein and Weiss. 
This proof is valid for general tempered
F{\o}lner sequences even without an additional growth condition and also for non-ergodic
actions, see Theorem~6.2 in \cite{W03}.

3) The existence of a hyperfinite equivalence relation $\cR$ and a cocycle $\alpha$ which satisfy the properties required in Theorem \ref{thm:MAIN_SMBhyp} {\it for all} p.m.p. actions of $\Gamma$  is an interesting question, which at this point has not been fully resolved for all ergodic p.m.p. actions of all countable groups $\Gamma$, as far as we are aware. We shall establish in \S \ref{kechris-Vaes} below, using an argument developed by A. Kechris for this purpose,  that given {\it any} countable group, it is possible to find an equivalence relation $\cR$ and a cocycle $\alpha : \cR \to \Gamma$, such that the assumption of Theorem \ref{thm:MAIN_SMBhyp} are satisfied for {\it all} strongly mixing actions of $\Gamma$. Furthermore, using a recent result of Vaes and Wahl \cite{VW17}, we shall see that for an extensive class of group  it is possible to find for {\it any} p.m.p. measure-preserving action $(X,\mu)$ of $\Gamma$,  a suitable equivalence relation $\cR^X$ over a product space $(X \times Y, \mu \times \nu)$, and a class injective cocycle $\alpha : \cR^X \to \Gamma$ satisfying all the assumptions of Theorem \ref{thm:MAIN_SMBhyp}. However, in this construction, $\cR^X$ is not derived as an extended relation coming from an equivalence relation over $(Y, \nu)$. 

 Theorem~\ref{thm:MAIN_SMBhyp} implies the corresponding 
Shannon-McMillan theorem, asserting convergence of the information functions in $L^1$. This result was originally proved by Danilenko and Park in \cite{DP02}.  

\begin{Corollary}\cite[Prop. 1.2]{DP02} \label{cor:MAIN_SML1}
 Convergence in Theorem \ref{thm:MAIN_SMBhyp} holds
for $\nu$-almost every $y \in Y$ in the $L^1(X,\lambda)$-norm, and also  
in the $L^1(X \times Y, \lambda \times \nu)$-norm. 
\end{Corollary}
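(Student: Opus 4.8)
The plan is to upgrade the pointwise convergence of Theorem~\ref{thm:MAIN_SMBhyp} to $L^1$-convergence by establishing uniform integrability of the normalized information functions; combined with the almost-everywhere convergence already in hand, Vitali's convergence theorem then delivers both asserted $L^1$ statements. Throughout I write $f_n(x,y):=\cJ\big(\cP^{\cR_n(y)}\big)(x)/|\cR_n(y)|$ and $m:=|\cR_n(y)|$, and I use that $\cP$ is a \emph{finite} partition, say with $k$ atoms, so that $\cP^{\cR_n(y)}=\bigvee_{z\in\cR_n(y)}\alpha(z,y)^{-1}\cP$ has at most $k^m$ atoms.

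The key step, and the main point of the argument, is a distributional tail bound that is uniform in both $n$ and $y$. I would first observe that for every $u>0$,
\[
\lambda\{x:\ f_n(x,y)\ge u\}=\sum_{A\in\cP^{\cR_n(y)}:\,\lambda(A)\le e^{-um}}\lambda(A)\ \le\ k^{m}\,e^{-um}=e^{\,m(\log k-u)},
\]
simply because there are at most $k^m$ atoms, each of measure at most $e^{-um}$ in the indicated range (and trivially this probability is $\le 1$). Since $m\ge 1$ always, as each class $\cR_n(y)$ contains $y$, for $u>\log k$ this gives $\lambda\{f_n(\cdot,y)\ge u\}\le e^{\log k-u}$. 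Feeding this into the layer-cake identity $\int_{\{f_n(\cdot,y)>t\}}f_n(\cdot,y)\,d\lambda=t\,\lambda\{f_n(\cdot,y)>t\}+\int_t^\infty\lambda\{f_n(\cdot,y)>u\}\,du$ yields, for every $t\ge\log k$,
\[
\int_{\{f_n(\cdot,y)>t\}}f_n(x,y)\,d\lambda(x)\ \le\ k\,(t+1)\,e^{-t}\ =:\ \eta(t),
\]
where crucially $\eta(t)\to 0$ as $t\to\infty$ independently of $n$ and $y$. Together with the uniform bound $\|f_n(\cdot,y)\|_{L^1(X)}=h^{\cP}(\cR_n)(y)/m\le H(\cP)$ already recorded in the proof of Theorem~\ref{thm:ENT}, this is precisely uniform integrability, and in a form uniform over $y$. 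I expect the verification of this maximal-type inequality (the atom count followed by the elementary tail estimate) to be essentially the only real work; the remaining steps are soft.

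With the bound established I would conclude as follows. Integrating $\eta(t)$ over $Y$ shows that $\{f_n\}$ is uniformly integrable on the finite measure space $(X\times Y,\lambda\times\nu)$, so Vitali's theorem applied to the a.e.\ limit $f_n\to h^{*}_{\cP}(\alpha)$ of Theorem~\ref{thm:MAIN_SMBhyp} gives convergence in $L^1(X\times Y,\lambda\times\nu)$. (Alternatively, this joint convergence drops out of Scheff\'e's lemma, since Theorem~\ref{thm:MAINcocycleentropy} already provides $\int_{X\times Y}f_n\,d(\lambda\times\nu)\to h^{*}_{\cP}(\alpha)$; but the uniform bound is what is needed for the fibrewise statement, so I establish it first and use it for both.) For the fibrewise claim, note that $\eta(t)$ is uniform in $y$, hence for \emph{every} $y$ the family $\{f_n(\cdot,y)\}_n$ is uniformly integrable on $(X,\lambda)$, while by Fubini, for $\nu$-almost every $y$ one has $f_n(\cdot,y)\to h^{*}_{\cP}(\alpha)$ for $\lambda$-almost every $x$. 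Applying Vitali's theorem on $(X,\lambda)$ for each such $y$ then yields convergence in the $L^1(X,\lambda)$-norm for $\nu$-almost every $y$, completing the proof.
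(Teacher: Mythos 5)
Your proof is correct, but it takes a genuinely different route from the paper's. The paper adapts Lindenstrauss's argument for the $L^1$ statement: for fixed $\varepsilon$ it splits $X$ into the set $\cC_n(y)$ of points whose normalized information function lies within $\varepsilon$ of $h^{*}_{\cP}(\alpha)$ and its complement, rewrites the integral over the complement with respect to the normalized restricted measure $\lambda^{*}$, recognizes that integral as a Shannon entropy with respect to $\lambda^{*}$ (hence at most $\log|\cP|$ after dividing by $|\cR_n(y)|$, since $\cP^{\cR_n(y)}$ has at most $|\cP|^{|\cR_n(y)|}$ atoms) plus the term $-\lambda\big(X\setminus\cC_n(y)\big)\log\lambda\big(X\setminus\cC_n(y)\big)$, and then uses the pointwise theorem itself, which forces $\lambda\big(X\setminus\cC_n(y)\big)\to 0$, to make both terms vanish; the product-space statement is then deduced by dominated convergence in $y$. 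You instead prove a distributional tail estimate by counting atoms: on the event $\{f_n(\cdot,y)\ge u\}$ there are at most $|\cP|^{|\cR_n(y)|}$ atoms each of measure at most $e^{-u|\cR_n(y)|}$, which (using $|\cR_n(y)|\ge 1$) yields uniform integrability with a modulus $\eta(t)=|\cP|(t+1)e^{-t}$ independent of both $n$ and $y$, and you then invoke Vitali's theorem. Both arguments hinge on the finiteness of $\cP$, but yours is quantitative and makes no reference to the limit: a single estimate serves both conclusions, and its uniformity in $y$ makes the fibrewise claim immediate once Fubini extracts a.e.-$x$ convergence on a.e.\ fiber, whereas the paper must first run the fiberwise argument and then integrate over $Y$. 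What the paper's softer argument buys in exchange is that it stays entirely within entropy identities, avoiding the layer-cake computation and any explicit tail bound.
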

%Due to lack of an argument in order to bound the information function, 
We do not know whether
one can also expect convergence in 
$L^1(Y,\nu)$ for $\lambda$-almost every $x \in X$. 
%This would follows from a suitable bound on the information functions, which is currently unavailable. 

\subsection*{ Acknowledgements.}
The authors would like to thank 
 Lewis Bowen, Brandon Seward and Benjamin Weiss for 
several enlightening and useful conversations and for their comments on a preliminary version of the present manuscript.  
The authors are indebted to Alexander Kechris for several insightful observations regarding measurable equivalence relations, and for his permission to include these observations and their proofs in the present paper.

%The first author acknowledges the  support of an ISF grant. The second author gratefully acknowledges support through a Technion Fine fellowship. 

\section{Amenable equivalence relations} \label{sec:amenable}

In this section, we discuss measured Borel 
equivalence relations which are amenable in the sense of 
Connes, Feldman and Weiss \cite{CFW81}. This condition was shown in that paper to be equivalent to hyperfiniteness.

\subsection{Measurable equivalence relations}
Consider a Borel measurable equivalence relation $\cR$ defined
over a standard Borel probability space $(Y, \mathcal{B}(Y), \nu)$, namely  $\cR$ is a Borel measurable subset
of $Y \times Y$ with the properties
\begin{itemize}
\item $(y,y) \in \cR$ for all $y \in Y$,
\item if $(y,z) \in \cR$, then $(z,y) \in  \cR$ for all $y,z \in Y$,
\item if $(y,z) \in \cR$ and $(z,w) \in \cR$, then $(y,w) \in \cR$ for all $y,z,w \in Y$.
\end{itemize}
Two points $y$ and $z$ are called $\cR$-equivalent points if $(y,z) \in \cR$,  and the equivalence class is denoted $\cR(y)=[y]=[y]_\cR$. Each $y\in Y$ determines a left and a right fiber in $\cR$, given by 
 $\cR^y=\set{(y,z)\,;\, z\cR y}\subset \cR$ and $\cR_y=\set{(z,y)\,;\, z\cR y}\subset \cR$. 
We will always assume that for almost every $y$, the fiber $\cR^y$ (and hence also $\cR_y$) 
is countable. $c^y$ will denote the counting measure on $\cR^y$, and $ c_y$ the counting measure on $\cR_y$. Integrating the counting measures
over $Y$, we obtain two $\sigma$-finite (but in general not finite) measures on $\cR$, namely 
$\tilde{\nu}_l=\int_Y c^y d\nu(y)$ and $\tilde{\nu}_r=\int_Y c_yd\nu(y)$. The measure $\nu$ on $Y$ is called $\cR$-non-singular if these two measures are equivalent. Note that $(\cR,\tilde{\nu}_l)$ is a standard Borel space, and $\pi_l:\cR\to Y$ given by $\pi_l(y,z)=y$ is a measurable factor map, and similarly for $\tilde{\nu}_r$ and $\pi_r$.   Note that under the coordinate projection $\pi_l :\cR\to Y$,  
the integral above expresses the measure disintegration of $\tilde{\nu}_l$ with respect to $\nu$. 
If $\tilde{\nu}_l=\tilde{\nu}_r$ then we denote it by $\tilde{\nu}$, and then $\tilde{\nu}$ as well as $\nu$  are called {\it $\cR$-invariant}, and  $\cR$ is  called a probability-measure-preserving (p.m.p.) equivalence relation. This is the only case we will consider below.

An {\em inner automorphism} of the relation $\cR$ is a measurable mapping
$\phi: Y \to Y$ which is almost surely bijective with measurable inverse
and with its graph $\operatorname{gr}(\phi)$ being contained in $\cR$.
The collection of all inner automorphisms gives rise to a group
$\operatorname{Aut}(\cR)=[\cR]$, called the {\em full group} of $\cR$. 
A countable subset $\Phi_0 \in \operatorname{Aut}(\cR)$ is said to be {\em generating}
(for $\cR$) if for $\tilde{\nu}$-almost all $(y,z) \in \cR$ there is some $\phi \in \Phi_0$
such that $z= \phi(y)$. 
%Fur our purposes, we will assume that there
%is a set $\Phi$ which is generating for $\cR$ and which generates 
$\Phi_0$ of course generates a {\em countable} subgroup of $\operatorname{Aut}(\cR)$, denoted $\Phi$.

%For measurable subsets $A, B \subseteq Y$ of positive measure, we say that $\psi$ is a {\em
%partial transformation} if $\psi: A \to B$ is measurable, 
%essentially bijective with measurable inverse and again,  $\operatorname{gr}(\psi) \subseteq
%\cR$.  
%Furthermore, we denote the space of all fiberwise finite subsets of $\cR$ by $\cF(\cR)$. 
The space $Fin(Y)$ of finite subsets of a Borel space $Y$ is a Borel space in a natural way, using the obvious Borel structure on $\bigcup_{n\in \NN}Y^n/Sym(n)$. 
Measurable mappings of the form $\cF: Y \to Fin(Y)$ satisfying that for almost every $y$, the set $\cF(y)$ 
consists of finitely many points equivalent to $y$, are called 
{\em subset functions} of $\cR$. The possibility that $\cF(y)=\phi$ is the empty set is allowed. We consider two subset functions $\cT, \cS$ to be equal if 
the set $\{y \,|\, \cS(y) \neq \cT(y)\}$ has zero measure. We write $\cS\subset \cT$ if $\cS(y)\subset \cT(y)$ for $\nu$-almost every $y$. 
 Subset functions
can be composed with each other, inverted, and subtracted from each other. We refer to \cite{BN13a} and \cite{BN13b} for a full discussion, and recall here the following definitions. 
\begin{eqnarray*}
\cS \circ \cT(y) &:=& \bigcup_{z \in \cT(y)} \cS(z), \\
\cT^{-1}(y)  &:=& \big\{ z \in [y] \,|\, y \in \cT(z)  \big\}, \\
\big( \cS \setminus \cT \big) (y) &:=& \cS(y) \setminus \cT(y). 
\end{eqnarray*} 
%We will equivalently write $\cS\cT$ instead of 
%$\cS \circ \cT$ for the composition of two subset functions. 
A finite non-empty set $D \subset \operatorname{Aut}(\cR)$ gives rise to the subset function 
$\cD(y)=\set{\phi(y)\,;\, \phi\in D}$. Given a 
subset function $\cT$ and a finite set $D \subset \operatorname{Aut}(\cR)$, $\cD\circ \cT$ is defined as above, and is given by 
\[
\cD \circ\cT(y) := \bigcup_{\phi \in D} \phi\big( \cT(y) \big) .
\]
We will also use the notation $D\circ \cT$ for this expression. 
A subset $\cK \subseteq \cR$ is said to be {\em bounded} if  
\[
\|\cK\|:= \operatorname{ess-sup}_y \max \big\{ |\cK_y|, |\cK^y| \big\}< \infty,
\]
where $\cK_y := \big\{ z \in [y] \,|\, (z,y) \in \cK \big\}$
and $\cK^y:= \big\{ z \in [y]\,|\, (y,z) \in \cK \big\}$.
Analogously, we say that a subset function $\cT$ is {\em bounded}, if
$\|\cT\| := \operatorname{ess-sup}_y \max\, \big\{ |\cT(y)|, |\cT^{-1}(y)| \big\}$
is finite.

\subsection{Hyperfiniteness and amenability}

The relation $\cR$ is called hyperfinite if there exists a
sequence $(\cR_n)$ consisting of subrelations $\cR_n\subset \cR$, where each $\cR_n$ has finite
classes, satisfying 
\[
\cR_n \subseteq \cR_{n+1}\,\,\,,\,\,\,\cR = \bigcup_{n=1}^{\infty} \cR_n.
\] 
We refer to such a sequence as a {\em hyperfinite exhaustion} of
$\cR$. Note that each $\cR_n$ is a subset function as defined above. 
If each $\cR_n$ is a bounded subset function we will call the hyperfinite exhaustion a {\it bounded hyperfinite exhaustion}. 
%{\bf We will discuss the notion of amenability of equivalence relations and some of its equivalent forms in detail in section \ref{sec:folneramenable} ????????}. Here we just 
Recall that it was proved by Connes, Feldman and Weiss  \cite[Thm. 10]{CFW81} that $\cR$ being amenable
is equivalent to $\cR$ being {\em hyperfinite}.

Let us note that 
hyperfinite exhaustions are asymptotically invariant under inner automorphisms of finite rank, in the following sense (see also \cite{BN13a}, \cite{BN13b}).

\begin{Proposition} \label{prop:finitephin}
If $(\cR_n)$ is a bounded hyperfinite exhaustion of the relation $\cR$, then there 
exists an increasing sequence of finite subgroups $\Phi_n \subseteq \operatorname{Aut}(\cR)$, $n \geq 1$,
such that $\Phi := \bigcup_{n \geq 1} \Phi_n$ is generating for $\cR$ 
and for all $n \in \NN$ and $\phi \in \Phi_n$, the graph $\operatorname{gr}(\phi)$
is contained in $\cR_n$.

In particular, for every $\phi \in  \Phi $,
there is an $n_0 \in \NN$ such that $\cR_n(y) \, \triangle \, \phi(\cR_n(y)) = \emptyset$ 
for $\nu$-almost every $y \in Y$ and every $n \geq n_0$. 
\end{Proposition}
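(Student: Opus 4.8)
The plan is to produce for each $n$ a \emph{finite} group $G_n\subseteq\operatorname{Aut}(\cR)$ which generates $\cR_n$ and all of whose elements have graph inside $\cR_n$, and then to set $\Phi_n:=\langle G_1\cup\dots\cup G_n\rangle$. By construction $\Phi_n\subseteq\Phi_{n+1}$, and since $\cR_n$ is itself an equivalence relation, the graph of any composition of inner automorphisms having graphs in $\cR_1\subseteq\dots\subseteq\cR_n$ again lies in $\cR_n$ by transitivity; hence $\operatorname{gr}(\phi)\subseteq\cR_n$ for all $\phi\in\Phi_n$. The union $\Phi=\bigcup_n\Phi_n$ then contains every $G_m$, and since $G_m$ generates $\cR_m$, the union $\Phi$ is generating for $\bigcup_m\cR_m=\cR$. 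The only nontrivial point is that each $\Phi_n$ is actually \emph{finite}, which I isolate as the crux below.

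For the construction of $G_n$ I would use that $\cR_n$ is bounded: write $N_n:=\|\cR_n\|<\infty$ and decompose $Y$ into the finitely many $\cR_n$-invariant Borel pieces $Y_k:=\{y:|\cR_n(y)|=k\}$, $1\le k\le N_n$. Fixing a Borel total order on the standard Borel space $Y$, the rank map $\ell(y):=|\{z\in\cR_n(y):z\le y\}|$ is a measurable, class-invariant bijection of each $k$-element class onto $\{1,\dots,k\}$. For $\sigma\in\operatorname{Sym}(k)$ let $\phi_\sigma$ send $y\in Y_k$ to the unique $z\in\cR_n(y)$ with $\ell(z)=\sigma(\ell(y))$, and act as the identity off $Y_k$; then $\sigma\mapsto\phi_\sigma$ embeds $\operatorname{Sym}(k)$ into $\operatorname{Aut}(\cR)$ with all graphs in $\cR_n$, and it clearly generates $\cR_n$ restricted to $Y_k$. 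Taking $G_n$ to be the product over $k=1,\dots,N_n$ of these copies yields a finite group generating $\cR_n$ with all graphs in $\cR_n$.

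The main obstacle is the finiteness of $\Phi_n$: a priori a finitely generated group of inner automorphisms with graphs in $\cR_n$ could be infinite, since such automorphisms form the uncountable group of measurable families of class permutations. The resolution is a local-finiteness observation. On each piece $Y_k$ the labeling $\ell$ identifies an inner automorphism $\phi$ with graph in $\cR_n$ with a \emph{class-invariant} measurable function $\Pi_\phi\colon Y_k\to\operatorname{Sym}(k)$, and $\phi\mapsto\Pi_\phi$ is an injective homomorphism into $L^0(Y_k/\cR_n,\operatorname{Sym}(k))$. I would then show that any subgroup of $L^0(Z,G)$, with $G$ finite, generated by finitely many elements $f_1,\dots,f_m$ is finite: partition $Z$ into the (at most $|G|^m$) atoms on which the tuple $(f_1,\dots,f_m)$ is constant; every element of the subgroup is then constant on each atom, so restriction to the atoms injects the subgroup into a finite power of $G$. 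Applying this over the finitely many pieces $Y_k$ shows that $\Phi_n$ is finite.

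Finally, for the \emph{in particular} statement, given $\phi\in\Phi$ I pick $n_0$ with $\phi\in\Phi_{n_0}$; then for all $n\ge n_0$ the graph of $\phi$ lies in $\cR_{n_0}\subseteq\cR_n$, so $\phi$ maps each $\cR_n$-class bijectively onto itself. Since $\cR_n(y)$ is exactly such a class, we get $\phi(\cR_n(y))=\cR_n(y)$ for $\nu$-almost every $y$, that is $\cR_n(y)\,\triangle\,\phi(\cR_n(y))=\emptyset$.
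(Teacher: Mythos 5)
Your proposal is correct, and its core construction is the same as the paper's: decompose $Y$ into the finitely many $\cR_n$-invariant pieces on which the class size $k$ is constant, use a measurable labeling of each class by $\{1,\dots,k\}$ (the paper gets this from measurable sections $J_1,\dots,J_N$ of $Y\to Y/\cR_n$, you get it from the rank function of a Borel total order --- these are interchangeable), and then realize permutations of the labels as inner automorphisms whose graphs lie in $\cR_n$ and which generate $\cR_n$. Where you genuinely depart from the paper is in how the increasing sequence of \emph{finite} subgroups is assembled. The paper produces, for each $n$, a single cyclic permutation $\phi_{\cR_n}$ of order $N$ on each constant-size piece and then simply says that ``taking the union of the corresponding groups'' finishes the proof; it never addresses why the subgroup of $\operatorname{Aut}(\cR)$ generated by $\Phi_{\cR_1}\cup\dots\cup\Phi_{\cR_n}$ is finite, which is exactly the point you isolate as the crux. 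Your resolution --- identify automorphisms with graphs in $\cR_n$ with class-invariant measurable maps into $\operatorname{Sym}(k)$, and prove that a finitely generated subgroup of $L^0(Z,G)$ with $G$ finite is finite by restricting to the atoms of the partition generated by the generators --- is correct and is the cleanest way to justify that each $\Phi_n:=\langle G_1\cup\dots\cup G_n\rangle$ is finite while keeping the sequence nested. So your argument buys a rigorous treatment of a step the paper elides, at the cost of working with the (larger but harmless) full symmetric groups instead of the paper's more economical cyclic generators.
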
  
%{\bf Note that this proof uses BOUNDEDNESS of $\cR_n$ !!!!!!!!}
\begin{proof}
Let $\cT$ be any equivalence subrelation of $\cR$ with finite classes of bounded size. We can divide $Y$ to finitely many measurable $\cT$-invariant sets where the size of $\cT(y)$ is fixed, and we can restrict $\cT$ to one of them. Without loss of generality we can thus assume that $\cT$ has classes of fixed size $N$ in $Y$.  As is well known (see e.g. \cite[\S 4]{FM77}, or \cite[Lem. 3b]{CFW81}) the factor space $Y/\cT$ consisting of equivalence classes of $\cT$ is a standard Borel space and admits measurable sections $J_1, \dots,J_N : Y/\cT \to Y$ such that $\set{J_i(\cT(y))\,;\, 1\le i\le N}=[y]_\cT=\cT(y)$ for almost every $y$. Define the cyclic permutation $\sigma_{\cT(y)}$ given by 
$J_1(\cT(y))\mapsto  J_2(\cT(y))\mapsto  \cdots J_N(\cT(y))\mapsto J_1(\cT(y))$ in each class, whose cycle length is $N$. Denote by $\phi_\cT$ the map on $Y$ which coincides with $\sigma_{\cT(y)}$ on each class $\cT(y)$, and denote the cyclic group generated by $\phi_\cT$ by $\Phi_\cT$. Then $\phi_\cT$ is measurable and constitutes an inner automorphism of $\cR$ which leaves invariant almost every class of the relation $\cT$, and the group $\Phi_\cT$ generates the relation $\cT$.  

Applying this procedure to each of the finite bounded relations $\cR_n$ and taking the union of the corresponding groups, the stated result follows. 

\end{proof}

\section{Pointwise covering lemmas} \label{sec:tiling}

We now establish pointwise decomposition results for subset functions which will provide
the central tool for proving the Shannon-McMillan-Breiman theorem in the 
next section. Our main lemma is motivated by the techniques developed by Lindenstrauss
for the proof of the corresponding covering lemma for tempered F{\o}lner 
sequences in amenable groups (see \cite[Lem. 2.4]{Li01}). However, working with hyperfinite 
exhaustions, we are able to avoid two material technicalities: 
\begin{itemize}
\item we do not need to use an auxiliary random parameter in order to be able to choose the desired coverings with high probability;
\item we are able to produce strictly disjoint coverings, and the discussion 
of $\delta$-disjointness (see \cite[Lem. 2.6, 2.7]{Li01}) becomes unnecessary.
\end{itemize}
Instead, we exploit the disjointness properties inherent in a sequence of nested equivalence relations, but nevertheless, the proof of pointwise almost sure convergence is still quite difficult and technical.  

As usual, $\cR$ will denote a p.m.p amenable equivalence relation
over $(Y,\nu)$ with $\tilde{\nu}$ denoting the invariant measure on $\cR$, and 
$(\cR_n)$ will denote a bounded hyperfinite exhaustion for $\cR$.

We start with the following elementary covering (and disjointification) lemma.

\begin{Proposition} \label{prop:basiccov}
Let $N,L \in \NN_{\ge 1}$ with $N < L$ and consider an arbitrary finite sequence of subset functions 
$\cB_j \subseteq \cR_L$, $1 \leq j \leq N$. Further, for $y\in Y$, consider a collection of classes (of the relations $\cR_n$ where $1\le n\le N$) given by 
\[
\mathfrak{F}(y) := \big\{ \cR_{n(j)}(w)\,|\, w \in \cB_j(y), 1 \leq j \leq N \big\}.
\]

Then, for a.e. $y \in Y$, 
we can extract from $\mathfrak{F}(y)$ a disjoint subcollection $\mathfrak{S}(y)$ of classes 
such that 
\begin{eqnarray*}
\coprod_{C \in \mathfrak{S}(y)} C \supset \bigcup_{j=1}^N \cB_j(y)\,, \text{ and so }
\sum_{C \in \mathfrak{S}(y)} \abs{C} \geq \Big| \bigcup_{j=1}^N \cB_j(y) \Big|. 
\end{eqnarray*}
\end{Proposition}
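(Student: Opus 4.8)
The plan is to exploit the \emph{laminar} (nested) structure that the classes $\cR_n(w)$ inherit from the nesting $\cR_1 \subseteq \cR_2 \subseteq \cdots$, which lets one produce an \emph{honestly} disjoint subcover by simply retaining the inclusion-maximal classes, with no need for the random tilings or $\delta$-disjointness of \cite{Li01}.

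First I would record the key structural observation: if $m,n \le N$ and $\cR_m(w) \cap \cR_n(w') \neq \emptyset$, then the two classes are nested. Indeed, pick $u$ in the intersection and assume $m \le n$; from $u \cR_m w$ and $\cR_m \subseteq \cR_n$ we get $w \cR_n u \cR_n w'$, whence $\cR_n(w) = \cR_n(w')$, and moreover $\cR_m(w) = \cR_m(u) \subseteq \cR_n(u) = \cR_n(w')$. Thus any two members of $\mathfrak{F}(y)$ are either disjoint or one contains the other. I would also note that the hypothesis $N < L$ keeps the family finite: for $w \in \cB_j(y) \subseteq \cR_L(y)$ and $n(j) \le N < L$ we have $\cR_{n(j)}(w) \subseteq \cR_L(w) = \cR_L(y)$, so every class appearing in $\mathfrak{F}(y)$ sits inside the single finite set $\cR_L(y)$. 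Hence $\mathfrak{F}(y)$ is a finite laminar family of subsets of $\cR_L(y)$.

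Next, by reflexivity each $w \in \cB_j(y)$ lies in its own class $\cR_{n(j)}(w) \in \mathfrak{F}(y)$, so $\bigcup_{j=1}^N \cB_j(y) \subseteq \bigcup_{C \in \mathfrak{F}(y)} C$. I would then take $\mathfrak{S}(y)$ to consist of the members of $\mathfrak{F}(y)$ that are maximal with respect to inclusion. In a finite laminar family the maximal members are pairwise disjoint (two intersecting maximal classes would be nested, hence equal), and every member is contained in some maximal one (ascend the finite chain of inclusions); consequently $\coprod_{C \in \mathfrak{S}(y)} C = \bigcup_{C \in \mathfrak{F}(y)} C \supseteq \bigcup_{j=1}^N \cB_j(y)$. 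The cardinality inequality is then immediate, since disjointness gives $\sum_{C \in \mathfrak{S}(y)} |C| = \bigl| \coprod_{C \in \mathfrak{S}(y)} C \bigr| \ge \bigl| \bigcup_{j=1}^N \cB_j(y) \bigr|$.

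The remaining point, which I expect to be the only genuine obstacle, is \emph{measurability}: the assignment $y \mapsto \mathfrak{S}(y)$ must be realized through measurable subset functions for the covering to be usable in the later arguments. Since the $\cB_j$ and the class maps $y \mapsto \cR_n(y)$ are measurable and all sets involved are finite subsets of $\cR_L(y)$, the condition ``$C$ is a maximal member of $\mathfrak{F}(y)$'' is measurable, and one extracts $\mathfrak{S}$ by a standard measurable-selection argument using the Borel structure on $Fin(Y)$. The purely combinatorial content is trivialized by the laminarity coming from the nested exhaustion, so the measurable bookkeeping is where the care is needed.
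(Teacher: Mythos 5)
Your proof is correct and follows essentially the same route as the paper: both rest on the observation that the nesting $\cR_1 \subseteq \cdots \subseteq \cR_N$ makes $\mathfrak{F}(y)$ a finite laminar family inside $\cR_L(y)$, and both take $\mathfrak{S}(y)$ to be the inclusion-maximal classes (the paper merely packages this selection as an explicit checking algorithm over an enumeration of $\mathfrak{F}(y)$). Your measurability remark is fine but not actually needed, since the statement is pointwise for a.e.\@ $y$; measurable dependence on $y$ only becomes relevant in the later lemmas that invoke this proposition.
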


\begin{proof}
Fix $y \in Y$.
We note first that for $z_1, z_2 \in \cR_L(y)$ and $1 \leq j_1, j_2 \leq N$,
there are three possibilities for the inclusion relation between $\cR_{j_1}(z_1)$ 
and $\cR_{j_2}(z_2)$:
\begin{itemize}
\item  $\cR_{j_1}(z_1) \cap \cR_{j_2}(z_2) = \emptyset$,
\item  $\cR_{j_1}(z_1) \subseteq \cR_{j_2}(z_2)$, \quad or
\item  $\cR_{j_1}(z_1) \supseteq \cR_{j_2}(z_2)$.
\end{itemize}
Any collection $\mathfrak{F}(y)$ of sets with the latter property has the property that the union of its constituents has a unique representation as a disjoint union of some of the constituents. 
To find this representation explicitly,  namely to choose the  subcollection
$\mathfrak{S}(y)$, enumerate all the elements in $\mathfrak{F}(y)$ and give them
distinct labels collected in an index set $\cI$. Then run the following checking algorithm.
\begin{enumerate}[(1)]
\item Set $\mathfrak{S}^{*}(y) = \emptyset$, $\cI^{*} = \cI$.
\item Check an arbitrary class $C \in \mathfrak{F}(y)$ 
with its corresponding label being contained in $\cI^{*}$. 
\item Given $C$, there are two possibilities:
\begin{enumerate}[(A)]
\item Either for all $1 \leq j \leq N$ and $z \in \cB_j(y)$ such that 
$\cR_{n(j)}(z)\cap C \neq \emptyset$, we have $C \supseteq \cR_{n(j)}(z)$,  
\item or there is some $1 \leq j_1 \leq N$, $z_1 \in \cB_{j_1}(y)$ such that
$C \subseteq \cR_{n(j_1)}(z_1)$ and $C \neq \cR_{n(j_1)}(z_1)$.
\end{enumerate}
\item Only in case of (A),  add $C$ to the subcollection $\mathfrak{S}^{*}(y)$. 
Then remove from $\cI^{*}$ all labels corresponding to classes
$\cR_{n(j)}(z) \in \mathfrak{F}(y)$ being contained in $C$. If the new set $\cI^{*} = \emptyset$,
 jump to step (5), otherwise  return to step (2).  
\item We have $\cI^{*} = \emptyset$ (meaning all classes have been checked) and we set 
$\mathfrak{S}(y) = \mathfrak{S}^{*}(y)$. This is the collection we aim for. 
\end{enumerate}
By construction, for a.e. $Y$, the elements $C \in \mathfrak{S}(y)$ are pairwise disjoint. Also, we made
sure that for every $1 \leq j \leq N$, every single $b \in \cB_j(y)$ is 
contained in some class $C$ taken into $\mathfrak{S}(y)$. This proves
the above inequality. 
\end{proof}

We now prove the main covering lemma, motivated by 
\cite[Lemma 2.1]{Li01}. 

\begin{Lemma}%[Abstract covering lemma]
 \label{lemma:abstrcomb}
Fix $0 < \delta < 1$ and fix an (arbitrary) finite non-empty set $D\subset \Phi$. 
% \subseteq \operatorname{Aut}(\cR)$.
Then, for sufficiently large $M \in \NN$, depending only on $D$ and $\delta$, the  following property holds. 

Let  ${\cT}_{i,j} \subseteq \cR_L$ 
$(1 \leq i \leq M, 1 \leq j \leq N_i)$ be  
an array of subset functions, such that for a.e. $y$, $\cT_{i,j}(y)=\cR_{n(i,j)}(y)$, where $n(i,j)\le L$. Assume that for $2\le i \le M$ and every $1\le j\le N_i$
\begin{eqnarray} \label{eqn:tempinfiber}
\Big| \bigcup_{k < i} D \circ \big({\cT}^{-1}_{k,*} \,{\cT}_{i,j} \big) \Big| \leq (1+\delta)\,\big|{\cT}_{i,j}\big|
\end{eqnarray}
almost surely, where ${\cT}_{k,*} := \bigcup_{j=1}^{N_k} {\cT}_{k,j}$ for $1 \leq k \leq M$. %$1 \leq i \leq M$. 

Then, given another array $\cB_{i,j} \subseteq \cR_L$
for $1 \leq i \leq M$ and $1 \leq j \leq N_i$, for $\nu$-almost every $y\in Y$ 
there are disjoint subcollections
\begin{eqnarray*}
\mathfrak{S}(y) \subseteq \big\{ {\cT}_{i,j}(w)\,|\, w \in \cB_{i,j}(y), 1 \leq i \leq M, 1 \leq j \leq N_i \big\}
\end{eqnarray*}
such that 
\begin{eqnarray*}
\sum_{C \in \mathfrak{S}(y)} \abs{C} \geq (1-\delta)\, \min_{1 \leq i \leq M} \Big|D \circ \bigcup_{j=1}^{N_i} \cB_{i,j}(y) \Big|.
\end{eqnarray*}
\end{Lemma}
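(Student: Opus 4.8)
The plan is to construct a single disjoint family $\mathfrak{S}(y)$ by a greedy sweep through the levels from the top down, $i=M,M-1,\dots,1$, and to show that the resulting family already beats the asserted bound relative to at least one level; since the right-hand side is a minimum over $i$, one good level suffices. Writing $B_p(y):=\bigcup_{j=1}^{N_p}\cB_{p,j}(y)$, the reduction is simply that if $\sum_{C\in\mathfrak{S}(y)}\abs{C}\ge(1-\delta)\,\bigabs{D\circ B_p(y)}$ holds for even one index $p$, then it holds with $\min_p$ on the right. The structural engine is Proposition~\ref{prop:basiccov}: every shape $\cT_{i,j}(w)=\cR_{n(i,j)}(w)$ is a class of one of the nested relations, so any two shapes in the whole array are nested or disjoint. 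This is exactly what lets me disjointify without producing any $\delta$-overlap, which is the simplification over \cite{Li01} advertised before the statement.

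Concretely, I would process the levels $i=M$ down to $1$, keeping the union $U_{>i}(y)$ of the shapes chosen so far at levels above $i$. At stage $i$ I apply Proposition~\ref{prop:basiccov} to those level-$i$ shapes seated at the centers $B_i(y)$ that happen to be disjoint from $U_{>i}(y)$, extract a disjoint subfamily exhausting their union, and adjoin it to $\mathfrak{S}(y)$. Nestedness guarantees that a level-$i$ shape which is not disjoint from $U_{>i}(y)$ is either swallowed by an already-chosen higher shape (so its center is covered anyway) or strictly contains one, the latter being the only genuinely lost case. Because $\mathfrak{S}(y)$ is disjoint, $\sum_{C\in\mathfrak{S}(y)}\abs{C}=\abs{U(y)}$ with $U(y)$ the full covered set, so the target becomes a lower bound on $\abs{U(y)}$.

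The temperedness hypothesis \eqref{eqn:tempinfiber} enters as control on the downward shadow of each chosen shape. A shape $C=\cT_{i,j}(c)$ selected at level $i$ can obstruct a center at a lower level $k<i$ only if that center's shape meets $C$; all such centers, together with their $D$-translates, lie in $\bigcup_{k<i}D\circ(\cT^{-1}_{k,*}\,\cT_{i,j})(c)$, whose size \eqref{eqn:tempinfiber} caps at $(1+\delta)\abs{C}$. Summing over the disjoint family bounds the total shadow by $(1+\delta)\abs{U(y)}$. Assuming for contradiction that no level is good, each $\bigabs{D\circ B_p(y)}$ exceeds $\abs{U(y)}$ by the fixed proportion $\tfrac{\delta}{1-\delta}$, and this excess must be absorbed by the level-$p$ slices of the shadows cast from above; comparing the contribution accumulated over all $M$ target levels against the single budget $(1+\delta)\abs{U(y)}$ forces $M$ to be bounded, so choosing $M$ large in terms of $\delta$ and $\abs{D}$ yields the contradiction, and hence a good level.

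The step I expect to be the real obstacle is this last piece of bookkeeping. A crude union bound only gives $\abs{U(y)}\gtrsim \bigabs{D\circ B_p(y)}/(2+\delta)$ and destroys the factor $(1-\delta)$; to recover it one must argue that the uncovered portions of the various $D\circ B_p(y)$ are charged to essentially disjoint pieces of the shadow budget rather than piling up on the same region, while simultaneously tracking how the automorphisms of $D$ thicken both the centers and the shadows. It is precisely this interaction, and not the (routine) measurable dependence of the greedy algorithm on $y$, that dictates the admissible size of $M$ as a function of $D$ and $\delta$; getting the slicing of the shadow by level exactly right is the crux of transporting Lindenstrauss's argument to the hyperfinite setting.
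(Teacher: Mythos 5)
Your construction is, up to this point, exactly the paper's: sweep the levels from $i=M$ down to $i=1$, discard at level $i$ the centers whose shapes meet what was already chosen (in the paper's notation $\tilde{\cB}_{i,j}(y)=\cB_{i,j}(y)\setminus\bigcup_{l>i}\cT_{i,j}^{-1}\circ\bigl(\bigcup\mathfrak{S}_l\bigr)(y)$), apply Proposition~\ref{prop:basiccov} to the surviving centers, and invoke \eqref{eqn:tempinfiber} to control what was discarded. But the step you yourself flag as the obstacle is indeed missing, and the mechanism you sketch for closing it is the wrong one and would fail. You propose to charge the deficits of the various $D\circ B_p(y)$ (where $B_p(y):=\bigcup_{j=1}^{N_p}\cB_{p,j}(y)$) to ``essentially disjoint pieces'' of a single shadow budget $(1+\delta)|U(y)|$, i.e.\ to bound the \emph{sum over $p$} of the level-$p$ slices of the shadows. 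No such bound is available: hypothesis \eqref{eqn:tempinfiber} controls only the size of the \emph{union} over $k<i$ of the slices $D\circ(\cT_{k,*}^{-1}\cT_{i,j})$, not their sum, and slices at different levels can literally coincide --- nothing in the hypotheses prevents $\cT_{p,j}=\cR_1$ for every $p,j$, in which case every level sees the identical shadow and the sum of slice sizes is $(M-1)$ times the union. So there is no shared budget to apportion, and no disjointness of shadow charges to be had.

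What actually closes the argument (this is the paper's downward induction, and it equally completes your contradiction framing) is that the shadow bound is used \emph{separately at each level}, while the additivity comes from the disjointness of the \emph{chosen shapes}, not of the shadows. Suppose every level fails, i.e.\ $|U(y)|<(1-\delta)\min_p\bigl|D\circ B_p(y)\bigr|$, where $U(y):=\coprod_{C\in\mathfrak{S}(y)}C$. At level $p$, Proposition~\ref{prop:basiccov} and the set inclusion coming from the definition of $\tilde{\cB}_{p,j}$ give
\[
\sum_{C\in\mathfrak{S}_p(y)}|C|\;\ge\;\Bigl|\bigcup_{j=1}^{N_p}\tilde{\cB}_{p,j}(y)\Bigr|\;\ge\;\frac{1}{|D|}\,\Bigl(\bigl|D\circ B_p(y)\bigr|-\sum_{l>p}\sum_{C\in\mathfrak{S}_l(y)}\bigl|D\circ\bigl(\cT_{p,*}^{-1}\,C\bigr)\bigr|\Bigr),
\]
and each term of the subtracted sum is at most $(1+\delta)|C|$ by \eqref{eqn:tempinfiber}, since the single level $k=p$ is contained in the union over $k<l$. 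Hence the subtracted sum is at most $(1+\delta)|U(y)|<(1-\delta^2)\bigl|D\circ B_p(y)\bigr|$, so each level contributes at least $\frac{\delta^2}{|D|}\bigl|D\circ B_p(y)\bigr|\ge\frac{\delta^2}{|D|}\min_p\bigl|D\circ B_p(y)\bigr|$ of \emph{new} mass; it is here, and only here, that disjointness is used, to make these gains add up: $|U(y)|\ge M\,\frac{\delta^2}{|D|}\min_p\bigl|D\circ B_p(y)\bigr|$. This contradicts the failure assumption as soon as $M\ge(1-\delta)|D|/\delta^2$, which is how $M$ is determined by $D$ and $\delta$. Note that the same region may be charged the full shadow budget at every level; that is harmless, because the per-level charge $(1+\delta)|U(y)|$ is, under the failure assumption, too small to wipe out a $\delta^2$-fraction of $\bigl|D\circ B_p(y)\bigr|$. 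This per-level accounting is the piece your outline lacks.
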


\begin{proof}
Fix a conull set $Y_0 \subseteq Y$ such that for all $y \in Y_0$ the inequality~\eqref{eqn:tempinfiber}
is fulfilled.  For $i=M$, apply Proposition~\ref{prop:basiccov} to the subset functions defined as  $\cB_j :=\cB_{M,j}$, where $1 \leq j \leq N_M$.
This way, by passing to another conull subset of $Y_0$, for $\nu$-a.e.\@ $y \in Y_0$, we obtain a disjoint subcollection 
\[
\mathfrak{S}_M(y) \subseteq \big\{ {\cT}_{M,j}(w)\,|\, 1 \leq j \leq N_M, w \in \cB_{M,j}(y) \big\}.
\]
with 
\[
\sum_{C \in \mathfrak{S}_M(y)} \abs{C} \geq \Big| \bigcup_{j=1}^{N_M} \cB_{M,j}(y) \Big|.
\]
Proceeding iteratively, for $i < M$, we set
\[
\tilde{\cB}_{i,j}(y) := \cB_{i,j}(y) \setminus \bigcup_{l > i} {\cT}^{-1}_{i,j} \circ (\bigcup \mathfrak{S}_l )(y),
\]
where $1 \leq j \leq N_i$ and $\bigcup \mathfrak{S}_l$ denotes  the union over all sets in $\mathfrak{S}_l$.
Applying Proposition~\ref{prop:basiccov} again, this time to the subset functions defined as  $\cB_j := \tilde{\cB}_{i,j}$, $1 \leq j \leq N_i$
gives a subcollection
\[
\mathfrak{S}_i (y)\subseteq \big\{ {\cT}_{i,j}(w), 1 \leq j \leq N_i, w \in \tilde{\cB}_{i,j}(y) \big\}
\] 
with the corresponding covering property, namely 
\[
\sum_{C \in \mathfrak{S}_i(y)} \abs{C} \geq \Big| \bigcup_{j=1}^{N_i} \tilde{\cB}_{i,j}(y) \Big|.
\]

Having constructed $\mathfrak{S}_i$ for all $1 \leq i \leq M$,
we finally set $\mathfrak{S} := \bigcup_{i=1}^M \mathfrak{S}_i$. By construction, $\mathfrak{S}$ is a disjoint collection of
sets. We are going to show
\begin{eqnarray} \label{eqn:whattoshow}
\sum_{i=m}^M \sum_{C \in \mathfrak{S}_i(y)} \abs{C} \geq \min\big\{1-\delta, (M-m+1) \,\frac{\delta^2}{|D|} \big\}
\cdot \min_{m \leq i \leq M} \Big| D \circ \bigcup_{j=1}^{N_i} \cB_{i,j}(y) \Big|
\end{eqnarray}
for all $1 \leq m \leq M$ and $\nu$-almost every $y\in Y$. 
The proof is by induction on $m=M,\dots, 1$, starting with $M$. 
By the first case in our argument 
above,
\[
\sum_{C \in \mathfrak{S}_M(y)} \abs{C}\geq \Big| \bigcup_{j=1}^{N_M} \cB_{M,j}(y) \Big| 
\geq \frac{1}{|D|} \, 
\Big| D\circ\bigcup_{j=1}^{N_M} \cB_{M,j}(y) \Big|,
\]
which shows the validity of an inequality which is in fact stronger than \ref{eqn:whattoshow}. 
In order to show the claim for $m < M$, we assume that for the element $y \in Y$ under consideration,
\begin{eqnarray} \label{eqn:nothingtoprove}
\sum_{l > m} \sum_{C \in \mathfrak{S}_l(y)} \abs{C} < (1-\delta)\, \min_{m \leq i \leq M} 
\Big| D \circ \bigcup_{j=1}^{N_i} \cB_{i,j}(y) \Big|,
\end{eqnarray}
since, otherwise, there is nothing to prove. By construction of the collection $\mathfrak{S}_m(y)$,
we have
\begin{eqnarray} \label{eqn:covaux11}
\sum_{C \in \mathfrak{S}_m(y)} \abs{C}\geq \Big| \bigcup_{j=1}^{N_m} \tilde{\cB}_{m,j}(y) \Big|.
\end{eqnarray}
It follows from 
\[
\tilde{\cB}_{m,j}(y) = {\cB}_{m,j}(y) \setminus  \bigcup_{l > m} \bigcup_{C \in \mathfrak{S}_l(y)} {\cT}_{m,j}^{-1}\,C
\]
that 
\begin{eqnarray} \label{eqn:covaux22}
\Big(D \circ \bigcup_{j=1}^{N_m} \tilde{\cB}_{m,j}\Big)(y) \supseteq \Big(D\circ \bigcup_{j=1}^{N_m} \cB_{m,j}\Big)(y) 
\setminus \bigcup_{l > m} \bigcup_{C \in\mathfrak{S}_l(y)}
D \circ \Big( \bigcup_{j=1}^{N_m} {\cT}^{-1}_{m,j}\, C \Big)\,.
\end{eqnarray}
Recall that by assumption (\ref{eqn:tempinfiber}), we have
\[
\Big| D \circ \Big( \bigcup_{j=1}^{N_m} \cT_{m,j}^{-1}\,C  \Big) \Big| \leq (1+\delta)\, \big| C \big|
\]
for a.e.\@ $y \in Y_0$, where $C \in \bigcup_{l > m} \mathfrak{S}_l(y)$. Hence, by \eqref{eqn:covaux22},
\begin{eqnarray*}
\Big| \bigcup_{j=1}^{N_m} \tilde{\cB}_{m,j}(y) \Big| &\geq& \frac{1}{|D|}\, \Big| \Big(D \circ \bigcup_{j=1}^{N_m} \tilde{\cB}_{m,j}\Big)(y) \Big| \\
&\geq& \frac{1}{|D|} \, \Big( \Big| D \circ \bigcup_{j=1}^{N_m} \cB_{m,j}(y) \Big| 
-(1+\delta) \sum_{l > m} \sum_{C \in\mathfrak{S}_l} \abs{C} \Big). 
\end{eqnarray*}
With \eqref{eqn:covaux11} we obtain by taking into account \eqref{eqn:nothingtoprove}
\begin{eqnarray*}
\sum_{C \in \mathfrak{S}_m} \abs{C} &\geq&  \frac{1}{|D|} \, \Big| D \circ \bigcup_{j=1}^{N_m} \cB_{m,j}(y) \Big| - 
\frac{1+\delta}{|D|}\, \sum_{l > m} \sum_{C \in\mathfrak{S}_l} \abs{C}\\
&\geq& \big( 1- (1+\delta)(1-\delta) \big)\, \frac{1}{|D|}\, \min_{m \leq i \leq M} \Big|  D \circ \bigcup_{j=1}^{N_i} \cB_{i,j}(y) \Big| \\
&=& \frac{\delta^2}{|D|}\, \min_{m \leq i \leq M} \Big| D \circ \bigcup_{j=1}^{N_i} \cB_{i,j}(y) \Big|.
\end{eqnarray*}
Consequently,
\begin{eqnarray*}
\sum_{l \geq m} \sum_{C \in \mathfrak{S}_l} \abs{C} &\geq& \sum_{l > m} \sum_{C \in \mathfrak{S}_l} \abs{C}  + 
\frac{\delta^2}{|D|}\, \min_{m \leq i \leq M} \Big| D \circ \bigcup_{j=1}^{N_i} \cB_{i,j}(y) \Big| \\
&\geq& \Big( \big(M - (m+1) + 1 \big) \, \frac{\delta^2}{|D|} + \frac{\delta^2}{|D|}  \Big) \, \min_{m \leq i \leq M}\,\Big| D \circ \bigcup_{j=1}^{N_i} \cB_{i,j}(y) \Big|.
\end{eqnarray*} 
Hence, the proof of the inequality~\eqref{eqn:whattoshow} is complete. 
Finally, choosing $M$ large enough such that 
\[
\frac{(M-1)\delta^2}{|D|} \geq (1-\delta)
\]
concludes the proof of the lemma. 
\end{proof}

%\section{Extensions of hyperfinite relations by group actions} 
% cocycle extensions, weak mixing, ergodicity, the pointwise ergodic theorem, entropy,  information functions...........
%Integral convergence result, yields $h^{*}(\cP)$...Define base space entropy 

\section{Proof of the Shannon-McMillan-Breiman theorem} \label{sec:SMB}

The goal of this section is to prove Theorem~\ref{thm:MAIN_SMBhyp}, as
well as~Corollary~\ref{cor:MAIN_SML1}. To this end, we adapt the overall strategy given 
in \cite[Section 4]{Li01} to the situation of amenable equivalence relations.
%This section is devoted to a Shannon-McMillan-Breiman theorem for general countable 
%groups $\Gamma$. 
%For the proof, Lindenstrauss' overall strategy for
%the proof of the Shannon-McMillan-Breiman theorem for p.m.p.\@ actions of
%amenable goups (see Theorem~1.3 in \cite{Li01})
%can be modified to the setting of amenable equivalence relations. However, 
%instead of group actions, 
%we have to deal of groupoid translations in fibers over its unit space,
%given by some probability space. \\
%We keep the notation from the previous sections, where 
As usual, $\cR$ is p.m.p.\@ and hyperfinite, $\Gamma$ is countable and  $\alpha: \cR \to \Gamma$ a  class injective  measurable cocycle. Recall that for a
p.m.p.\@ group action $\Gamma \curvearrowright (X,\lambda)$, 
%we have defined the relation $\cR^X$ as the extension of the equivalence relation $\cR$ by the cocycle $\alpha$ and the $\Gamma$-action on $X$. 
the extended equivalence relation $\cR^X$ over
$\big( X \times Y, \lambda \times \nu \big)$ is defined by the condition 
\[
\big( (x,y), (x^\prime,y^\prime) \big) \in \cR^X \Longleftrightarrow y\cR y^\prime \,\,\text{ and } x=\alpha(y,y^\prime)x^\prime
\,,\]
%or equivalently by the condition
%\[
%\big( (x,y), (x^\prime,y^\prime) \big) \in \cR^X \Longleftrightarrow y\cR y^\prime \,\,\text {and } 
%\, \exists\, \gamma \in\Gamma:\,
%\gamma = \alpha(y,y^\prime), \,\, \gamma x^\prime = x.
%\]
%
When the measure $\nu$ is $\cR$-invariant, it follows that $ \lambda \times \nu$ is $\cR^X$-invariant,
since the $\Gamma$-action on $X$ preserves $\lambda$. The projection map $\pi : \cR^X\to \cR$ given by $(x,y)\to y$ is {\it injective} when restricted to the $\cR^X$-equivalence class of $(x,y)$, for almost all $(x,y)\in \cR^X$.  Further, it 
is well known that an extension of an amenable action is amenable, and thus in particular if $\cR$ is amenable, so is $\cR^X$. But since the extension is class-injective, in fact every hyperfinite exhaustion $(\cR_n)$ of $\cR$ can be canonically lifted to a hyperfinite exhaustion $(\cR_n^X)$ of $\cR^X$, via $\cR_n^X((x,y))=\set{(\alpha(z,y)x,z)\,;\, z\in \cR_n(y)}$. Note that if $(\cR_n)$ is a bounded hyperfinite exhuastion, then so is $(\cR_n^X)$, with the same bounds on the equivalence classes.

For a given hyperfinite exhaustion $(\cR_n)$ for $\cR$, we define a set 
$\Phi \subseteq \operatorname{Aut}(\cR)$ satisfying the conclusions of Proposition~\ref{prop:finitephin}.
Then, every $\phi \in \Phi$ can be extended naturally to an inner automorphism
$\phi^X \in \operatorname{Aut}(\cR^X)$ by setting
\begin{equation}\label{phi^X}
\phi^{X} \big( (x,y) \big) := \big( \alpha(\phi(y),y)x, \phi(y) \big).
\end{equation}
For a subset $D \subseteq \Phi$, we write $D^X$ for the set 
$\big\{ \phi^X\,|\, \phi\in D \big\}$. Note that by definition, the set 
$\Phi^X:= \{ \phi^X\,|\, \phi \in \Phi\}$ is generating for the relation $\cR^X$.
Clearly, if $\phi$ preserves the classes of $\cR_n$ almost surely, then $\phi^X$ preserves the classes of $\cR_n^X$ almost surely.

Let us now recall the concept of ergodicity for a
measure preserving equivalence relation. Let $\cZ$ be such a relation over a probability space $(Z,\eta)$. 
A subset
$A \subseteq Z$ will be called $\cZ$-{\em invariant} if 
\[
\big( A \times Z \big)\, \cap \cZ =( A \times A)\cap \cZ. 
\]
The relation $\cZ$ is {\em ergodic} if every 
$\cZ$-invariant set $A \subseteq Z$ satisfies $\eta(A) \in \{0,1\}$.
From now on, we will always assume that the relation $\cR^X$ is {\em ergodic}.
For sufficient conditions guaranteeing this property, we refer to the discussion in \S \ref{sec:ergodicity} below.

%\begin{Theorem}[SMB-Theorem for hyperfinite equivalence relations] \label{thm:SMBhyp}
%Let $\Gamma$ be a countable group which acts on a probability space $(X,\lambda)$
%by p.m.p. Assume further that $\cR$ is an amenable, measure preserving equivalence
%relation over a probability space $(Y,\nu)$ which gives rise to a measurable
%cocycle $\alpha: \cR \to \Gamma$ and such that the extended relation $\cR^X$ is ergodic. \\
%Then, for every pointwise approximate hyperfinite exhaustion $(\cF_n)$, 
%with the property that 
%\[
%\lim_{n \to \infty}|\cF_n(y)|/\log\,n = \infty \quad \quad \nu\mbox{- a.e. } y \in Y,
%\]
%we have the following convergence result.
%Given a finite partition $\cP$ of $X$, for $(\lambda \otimes \nu)$-almost every $(x,y) \in X \times Y$, 
%\[
%\lim_{n \to \infty} \frac{\cJ\big( \cP^{\cF_n(y)}(x) \big)}{|\cF_n(y)|} = h^{*}_{\cP},
%\]
%where $h^{*}_{\cP}$ is the cocycle entropy of the partition
%$\cP$. 
%\end{Theorem}

%\begin{Corollary}[$L^1$-convergence of the information function] \label{cor:SML1}
%The above convergence theorem also holds
%for $\nu$-almost every $y \in Y$ in $L^1(X,\lambda)$ and 
%in $L^1(X \times Y, \lambda \otimes \nu)$. 
%\end{Corollary}

One crucial ingredient for the proofs of Theorem~\ref{thm:MAIN_SMBhyp} and Corollary~\ref{cor:MAIN_SML1}
is the pointwise ergodic theorem. A general form of it being valid for suitable asymptotically invariant sequences of subset functions in an  amenable equivalence relation
was established in \cite[Thm.\@ 2.1]{BN13b}. 
Since we restrict our discussion here to hyperfinite exhaustions, we will state a less general but easily accessible special case which is sufficient for our purposes. Indeed, the following fact is an immediate consequence of the martingale convergence theorem.  

\begin{Theorem} \label{thm:ETequiv}
Let $\cZ$ be an ergodic, p.m.p.\@  equivalence relation 
over a probability space $(Z,\eta)$. Let $(\cZ_n)$ be a bounded hyperfinite exhaustion for $\cZ$. \\ 
Then for all $f \in L^1(Z,\eta)$, we have 
\begin{eqnarray*}
\lim_{n \to \infty} \big| \cZ_n(z) \big|^{-1}\,\sum_{w \in \cZ_n(z)} f(w)
 = \int_Z f(w)\, d\eta(w)
\end{eqnarray*}
for $\eta$-almost every $z \in Z$. 
\end{Theorem}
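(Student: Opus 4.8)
The plan is to realize the ergodic averages as conditional expectations with respect to a decreasing family of $\sigma$-algebras and then to invoke the reverse (downward) martingale convergence theorem. For each $n$, let $\cF_n \subseteq \cB(Z)$ denote the $\sigma$-algebra consisting, modulo $\eta$-null sets, of all $\cZ_n$-invariant measurable sets, i.e.\ those $A$ with $(A\times Z)\cap \cZ_n = A\times A$. Since $\cZ_n \subseteq \cZ_{n+1}$, every $\cZ_{n+1}$-class is a union of $\cZ_n$-classes, so each $\cZ_{n+1}$-invariant set is automatically $\cZ_n$-invariant; hence $(\cF_n)_n$ is a \emph{decreasing} sequence of $\sigma$-algebras. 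The whole argument rests on identifying the conditional expectation onto $\cF_n$ with the class average.

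First I would verify that the conditional expectation $\EE[f\mid\cF_n]$ agrees $\eta$-a.e.\ with the average $A_n f(z) := |\cZ_n(z)|^{-1}\sum_{w\in\cZ_n(z)} f(w)$. The function $A_n f$ is constant on $\cZ_n$-classes, hence $\cF_n$-measurable, so it suffices to check $\int_B A_n f\,d\eta = \int_B f\,d\eta$ for every $\cZ_n$-invariant $B$. This is exactly the assertion that the invariant measure $\eta$ disintegrates over $\cF_n$ with \emph{normalized counting measure} on each finite $\cZ_n$-class. Applying Proposition~\ref{prop:finitephin} to the bounded hyperfinite exhaustion, each $\cZ_n$ is generated by a finite group of inner automorphisms preserving $\eta$; averaging $f$ over the action of this finite group on each class produces precisely $A_n f$ and leaves the integral over the invariant set $B$ unchanged, which gives the required equality.

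Next I would identify the tail $\sigma$-algebra $\cF_\infty := \bigcap_{n} \cF_n$ with the $\sigma$-algebra of $\cZ$-invariant sets. Since $\cZ = \bigcup_n \cZ_n$, a measurable set is $\cZ$-invariant if and only if it is $\cZ_n$-invariant for every $n$, i.e.\ if and only if it lies in every $\cF_n$. By the assumed ergodicity of $\cZ$, every such set has measure $0$ or $1$, so $\cF_\infty$ is $\eta$-trivial and $\EE[f\mid\cF_\infty]$ is the constant function $\int_Z f\,d\eta$. Finally, Lévy's downward theorem applied to the decreasing family $(\cF_n)$ yields, for $f\in L^1(Z,\eta)$, the $\eta$-almost everywhere convergence $\EE[f\mid\cF_n]\to\EE[f\mid\cF_\infty]$. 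Combining the three steps gives
\[
|\cZ_n(z)|^{-1}\sum_{w\in\cZ_n(z)} f(w) = \EE[f\mid\cF_n](z) \longrightarrow \int_Z f\,d\eta
\]
for $\eta$-almost every $z$, as claimed.

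The only genuinely substantive point is the identification in the second step: that the conditional expectation onto $\cF_n$ is the \emph{unweighted} class average. Everything else is formal. This is where the p.m.p.\ hypothesis is essential, since for a merely nonsingular relation the conditional measures on classes would be weighted by Radon--Nikodym cocycles rather than uniform, and $A_n f$ would not be the conditional expectation. Boundedness of the exhaustion is convenient — it guarantees the averages are honest finite sums with a uniform essential bound — but it is the finiteness of the $\cZ_n$-classes together with measure-invariance that is really used.
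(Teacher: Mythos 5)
Your proof is correct and is essentially the paper's own argument: the paper gives no details, stating only that the theorem ``is an immediate consequence of the martingale convergence theorem,'' and your identification of the class averages with conditional expectations onto the decreasing $\sigma$-algebras of $\cZ_n$-invariant sets, followed by L\'evy's downward theorem and triviality of the intersection $\sigma$-algebra via ergodicity, is precisely the argument being invoked. Your use of Proposition~\ref{prop:finitephin} to justify that the conditional expectation is the unweighted class average is a sound way to settle the one substantive step.
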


%\begin{proof}
%We observe that hyperfinite exhaustions are asymptotically invariant
%(as a consequence of Proposition~\ref{prop:finitephin})
%and regular (since all the $\cZ_n$ are equivalence relations) 
%in the sense of the setting of~\cite{BN13b}. 
%The theorem then is a consequence of Theorem~2.1 there.

%\end{proof}

We now turn to establish some preliminary 
lemmas that will be used below. 
The first step is a straightforward consequence of the ergodicity of
the relation. 

\begin{Lemma} \label{lemma:ergodicity}
Let $\cZ$ be an ergodic p.m.p. equivalence relation over a probability $(Z, \eta)$ along
with a countable set $\Phi$ of inner automorphisms generating $\cZ$.
Then, for every $\delta > 0$, and every set $A \subseteq Z$ with 
$\eta(A) > 0$, there is a finite set $D \subset \Phi$ such that 
\[
\eta\big( D \circ A \big) \geq 1 - \delta. 
\]
\end{Lemma}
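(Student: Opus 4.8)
The plan is to deduce the statement from the ergodicity of $\cZ$ together with the generating property of $\Phi$, in three moves: first show that the $\cZ$-saturation of $A$ has full measure, then show that this saturation is exhausted modulo a null set by the countable family $\{\phi(A)\,;\,\phi\in\Phi\}$, and finally extract a finite subfamily by continuity of measure.

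First I would introduce the saturation $[A]_\cZ := \{z\in Z\,;\,(a,z)\in\cZ \text{ for some } a\in A\}$. By transitivity of $\cZ$ this set is $\cZ$-invariant, and since the diagonal lies in $\cZ$ it contains $A$, whence $\eta([A]_\cZ)\geq\eta(A)>0$. Ergodicity of $\cZ$ then forces $\eta([A]_\cZ)=1$. (Its measurability will be a byproduct of the next step, which identifies it with a measurable set up to a null set.)

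The technical heart is to show that $B:=\bigcup_{\phi\in\Phi}\phi(A)$ coincides with $[A]_\cZ$ up to an $\eta$-null set. The inclusion $B\subseteq[A]_\cZ$ is immediate, since $\operatorname{gr}(\phi)\subseteq\cZ$ for each $\phi$. For the reverse, set $E:=\{(y,z)\in\cZ\,;\,z\neq\phi(y)\text{ for all }\phi\in\Phi\}$; the hypothesis that $\Phi$ is generating says precisely that $\tilde{\eta}(E)=0$, where $\tilde\eta$ denotes the invariant measure on $\cZ$. Now if $z\in[A]_\cZ\setminus B$, choose $a\in A$ with $(a,z)\in\cZ$; since $z\notin\phi(A)$ for every $\phi$, in particular $z\neq\phi(a)$ for all $\phi$, so $(a,z)\in E\cap(A\times Z)$. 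Thus $[A]_\cZ\setminus B$ is contained in the image of $E\cap(A\times Z)$ under the projection $\pi_r\colon(w,z)\mapsto z$. To pass from $\tilde{\eta}(E)=0$ to $\eta$-nullity of this projection I would invoke the disintegration $\tilde{\eta}=\int_Z c_z\,d\eta(z)$ of the invariant measure over the second coordinate (valid because $\tilde{\eta}_l=\tilde{\eta}_r$ in the p.m.p.\ case): the integral of the fibre counts $\bigl|\{w\,;\,(w,z)\in E\cap(A\times Z)\}\bigr|$ vanishes, so these integer-valued counts are zero for $\eta$-almost every $z$, i.e.\ $\pi_r\bigl(E\cap(A\times Z)\bigr)$ is $\eta$-null. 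Consequently $\eta(B)=\eta([A]_\cZ)=1$.

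Finally, I would enumerate $\Phi=\{\phi_1,\phi_2,\dots\}$ and put $B_N:=\bigcup_{k=1}^N\phi_k(A)=\{\phi_1,\dots,\phi_N\}\circ A$. Since $B_N\uparrow B$, continuity of measure gives $\eta(B_N)\to\eta(B)=1$, so for $N$ large enough $\eta(B_N)\geq 1-\delta$; taking $D=\{\phi_1,\dots,\phi_N\}$ finishes the argument. The main obstacle is the second step: the generating property is an almost-everywhere statement about \emph{pairs} with respect to $\tilde{\eta}$, and the delicate point is to convert it into an almost-everywhere statement about \emph{points} of $Z$. This conversion is exactly where the fibre-wise disintegration of $\tilde{\eta}$ and the invariance identity $\tilde{\eta}_l=\tilde{\eta}_r$ enter, and everything else is routine measure theory.
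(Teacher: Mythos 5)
Your proposal is correct, and it shares the paper's overall skeleton: both arguments reduce the lemma to showing that the countable union $B=\bigcup_{\phi\in\Phi}\phi(A)$ has full measure, and then extract a finite $D\subset\Phi$ by continuity of measure along an exhaustion of $\Phi$ by finite subsets. The difference lies in how full measure of $B$ is established. The paper argues (by contradiction) that $B=\Phi\circ A$ is itself $\cZ$-invariant, invoking the generating property to absorb any point equivalent to a point of $B$ back into $B$, and then applies ergodicity to $B$ directly, using $\eta(B)\geq\eta(A)>0$. You instead apply ergodicity to the exact saturation $[A]_\cZ$, whose invariance needs nothing beyond transitivity, and use the generating property only afterwards, to show $[A]_\cZ\setminus B$ is $\eta$-null via the bad-pair set $E$ and the disintegration $\tilde{\eta}=\int_Z c_z\,d\eta(z)$. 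Your detour buys rigor exactly where the paper is informal: the generating hypothesis is only a $\tilde{\eta}$-a.e.\ statement about \emph{pairs}, and the paper's invariance argument uses it as if it held pointwise; moreover, as written the paper produces $z=\phi^{\prime}\phi(a)$, a point of $(\Phi\circ\Phi)\circ A$, which lands back in $\Phi\circ A$ only if $\Phi$ is closed under composition or if one first uses transitivity to apply the generating property to the pair $(a,z)$. Your disintegration step converts the a.e.-pairs hypothesis into an a.e.-points conclusion with no such repairs needed. What the paper's route buys is brevity: no saturation, no disintegration, and no discussion of measurability of $[A]_\cZ$ --- a point you defer to the identification with $B$ modulo null sets, which is legitimate in the completed $\sigma$-algebra (alternatively, $[A]_\cZ$ is Borel outright by the Lusin--Novikov theorem, since the classes of $\cZ$ are countable).
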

%{\bf This is true in general, but it may be necessary to assume that $D$ is of finite rank, namely 
%$D\circ \cR_n =\cR_n$ for all sufficiently large $n$. This may have to come into the definition of $\Phi$ itself.} 
\begin{proof}
Let $A \subseteq Z$ be measurable with $\eta(A) > 0$. 
Assume that there is some $\delta_0 > 0$ such that 
for all finite collections $D \subset \Phi$, we have $\eta(D \circ A) < 1 - \delta_0$.
We define $\overline{A} := \Phi \circ A = \bigcup_{\phi \in \Phi} \phi(A)$. This 
set is invariant under the relation $\cZ$. To see this, consider $z \in Z$, $\phi \in \Phi$ and $a \in A$ such that 
$(\phi(a), z) \in \cZ$. Since $\Phi$ is generating, there is a further
inner automorphism $\phi^{\prime} \in \Phi$ such that $z = \phi^{\prime}\phi(a)$.
We conclude that $z \in \Phi \circ A$ and thus, 
\[
\big( \overline{A} \times Z \big) \cap \cZ = (\overline{A} \times \overline{A})\cap \cZ.
\]  
Since $\Phi$ is countable, it follows $\eta(\overline{A}) \le 1 -\delta_0$ by
our assumption. Now $\overline{A}$ is an invariant set, hence by ergodicity
of the relation, we obtain $\eta(\overline{A}) = 0$. However, since every
inner automorphism preserves the measure, we have $\eta(\overline{A}) \geq
\eta(A) > 0$. This is a contradiction. 
\end{proof}

We need the following combinatorial covering lemma, which establishes  the measured equivalence relation
version of a covering lemma originally formulated for F{\o}lner sequences, see \cite[Lem.\@ 1]{OW83}
and \cite[Lem.\@ 4.2]{Li01}.
\begin{Lemma} \label{lemma:comb}
Let $\cR$ be a p.m.p. equivalence relation over $(Y,\nu)$, and let $(\cR_n)$ be a bounded hyperfinite exhaustion satisfying  
$\lim_{n \to \infty} \operatorname{ess}\,\operatorname{inf}_y |\cR_n(y)| = \infty $.
%$\lim_{n \to \infty}|\cR_n(y)| = \infty$ for $\nu$-almost every $y \in Y$. 
%and assume that $\cR_n(y)\ge a_n$ for almost all $y$, with $a_n\nearrow \infty$ {\bf REALLY NEEDED HERE???}. \\
Then, for every $\eta > 0$, there is some $\ell \in \NN$ such that the following holds.

Suppose that to $y \in Y$  
there corresponds a finite increasing sequence $(k_i), 1 \leq i \leq r(y)$ of integers (depending on $y$)  with  $|\cR_{k_1}(z)| \geq \ell$
for almost every $z \in Y$. 
%subset functions $\bar{\cR}^y_1 \subseteq \bar{\cR}^y_2 \subseteq 
%\dots \subseteq \bar{\cR}^y_r$ such that each $\bar{\cR}^y_i$, $ 1\le i \le r$ is equal to some subset function 
%$\cR_{k_i}$ with $$ and such that $\bar{\cR}^y_i(z) \setminus \bar{\cR}^y_{i-1}(z) \neq \emptyset$ for all $1 \leq i \leq r$
%$and every $z \in [y]$
%(with the convention that here, we set $\bar{\cR}^y_0 = \emptyset$). 
Then, there is $n_0 \in \NN$ such that for all $n \geq n_0$, the number of 
possible disjoint subcollections $\mathfrak{S}(y)$ of the form 
\[
\mathfrak{S}(y) \subseteq \big\{{\cR}_{k_i}(c) \,|\, c \in \cR_n(y), 1 \leq i \leq r(y) \big\}
\]
is at most $2^{\eta|\cR_n(y)|}$.
%where $c> 0$ does not depend on $n$, $N$, $\beta$, $r$ or $y$. 
\end{Lemma}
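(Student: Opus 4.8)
The plan is to fix $y$ in a conull set and reduce the assertion to a purely combinatorial counting problem on the finite set $\cR_n(y)$. Since the sequence $(k_i)$ is finite, $K(y):=\max_{1\le i\le r(y)}k_i$ is finite, and I would take $n_0\ge K(y)$. Then for every $n\ge n_0$ and every $c\in\cR_n(y)$ the nesting $\cR_{k_i}\subseteq\cR_n$ gives $\cR_{k_i}(c)\subseteq\cR_n(c)=\cR_n(y)$, so that every candidate tile lies inside the ambient set $\cR_n(y)$; write $N:=|\cR_n(y)|$. Moreover, since $k_i\ge k_1$ we have $\cR_{k_i}(c)\supseteq\cR_{k_1}(c)$, and the hypothesis $|\cR_{k_1}(\cdot)|\ge\ell$ forces every candidate tile to contain at least $\ell$ points.

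The next step is to exploit the laminar structure. Exactly as in the trichotomy underlying Proposition~\ref{prop:basiccov}, any two candidate tiles $\cR_{k_i}(c)$ and $\cR_{k_{i'}}(c')$ are either disjoint or nested, so a disjoint subcollection $\mathfrak{S}(y)$ is nothing but an antichain in this laminar family. I would encode such an $\mathfrak{S}(y)$ economically: fixing once and for all a measurable linear order on $\cR_n(y)$, record for each member tile its least element together with the scale index $i\in\{1,\dots,r(y)\}$ identifying the level $k_i$. Because the member tiles are pairwise disjoint and each carries at least $\ell$ of the $N$ available points, any subcollection has at most $N/\ell$ members, and it is recovered from its encoding. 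Hence the number of disjoint subcollections is bounded by the number of such markings, namely
\[
\sum_{s\le N/\ell}\binom{N}{s}\,r(y)^{\,s}.
\]

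It then remains to make this count subexponential in $N$. Here I would invoke the standard binomial estimate that for $s\le N/\ell$ one has $\binom{N}{s}\le 2^{N\varepsilon(\ell)}$, where $\varepsilon(\ell)$ — the binary entropy of $1/\ell$ — tends to $0$ as $\ell\to\infty$ (using that the binary entropy function is increasing on $[0,1/2]$ and that $s/N\le 1/\ell\le 1/2$). Choosing $\ell=\ell(\eta)$ large enough that $\varepsilon(\ell)$ is smaller than a fixed fraction of $\eta$ makes the dominant factor acceptable; the remaining factors — the summation over the at most $N+1$ values of $s$, and the scale factor $r(y)^{\,s}\le r(y)^{\,N/\ell}$ — are subexponential in $N$, and they are absorbed into $2^{\eta N}$ once $N$ is large. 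This is precisely where the growth condition enters: because $\operatorname{ess}\,\operatorname{inf}_y|\cR_n(y)|\to\infty$, one may enlarge $n_0$ so that for all $n\ge n_0$ the quantity $N=|\cR_n(y)|$ is large enough for the correction terms to be dominated.

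The heart of the argument — and the step I expect to be the main obstacle — is this last passage from the crude enumeration to an estimate of the form $2^{\eta N}$. The mechanism is that tiles are forced to be large (at least $\ell$ points), so their number, and hence the entropy cost $\varepsilon(\ell)$, can be driven below $\eta$ by taking $\ell$ large; the genuine subtlety is to control the auxiliary multiplicative factors, in particular the contribution of the number of scales $r(y)$, which is what dictates how large $\ell$ and $n_0$ must be taken. The nesting of the exhaustion $(\cR_n)$ does the geometric work for free, replacing the delicate $\delta$-disjointness and random-tiling bookkeeping of the amenable-group argument of \cite{Li01} by the clean trichotomy of laminar families.
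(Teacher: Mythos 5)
Your reduction to a counting problem over the laminar family $\{\cR_{k_i}(c)\}$ is the same first move as the paper's, and your entropy estimate for $\sum_{s\le N/\ell}\binom{N}{s}$ is fine. The gap is in your encoding, at the precise step you yourself flag as the main obstacle: the claim that the factor $r(y)^s\le r(y)^{N/\ell}$ is ``subexponential in $N$'' and can be absorbed into $2^{\eta N}$. It is not subexponential: for fixed $y$ it equals $2^{(\log_2 r(y)/\ell)\,N}$, i.e.\ it is exponential in $N$ with the constant rate $\log_2 r(y)/\ell$, which does not decay as $n\to\infty$, so enlarging $n_0$ cannot help. To beat it you would need $\log_2 r(y)/\ell<\eta/2$, i.e.\ $r(y)<2^{\eta\ell/2}$; but the quantifiers of Lemma~\ref{lemma:comb} force $\ell$ to be chosen from $\eta$ alone, \emph{before} the sequence $(k_i)_{1\le i\le r(y)}$ is given, and no bound on $r(y)$ is assumed. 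Nor is this a removable technicality: in the application to the Shannon--McMillan--Breiman theorem the scales run over all $k$ with $N_1\le k< n(x,y)\le m$, so the number of scales grows with $n$, and any bound depending on $r(y)$ would be useless there. (Replacing the index $i$ by, say, the cardinality of the tile runs into the same problem: the per-tile cost is still of order $\log N$, giving $2^{(N/\ell)\log_2 N}$.)

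The paper's proof differs from yours exactly at this point, and resolving it is the whole content of the lemma: it encodes a disjoint subcollection by \emph{two} subsets of $\cR_n(y)$ and no scale indices at all. Besides the set of centers $\cC(y)$, one records a set $\cN(y)$ of witness points, one per tile: for a tile $\cR_{k_{t(c)}}(c)$ one takes the maximal $i\le t(c)$ with $\cR_{k_i}(c)\setminus\cR_{k_{i-1}}(c)\neq\emptyset$ and puts into $\cN(y)$ a point of that difference. Disjointness of the tiles guarantees that no witness of another center can enter the chain at $c$, so the tile at $c$ is recovered as $\cR_{k_t}(c)$ for the \emph{minimal} $t$ with $\cR_{k_t}(c)\cap\cN(y)\neq\emptyset$; hence the map from subcollections to pairs $\big(\cC(y),\cN(y)\big)$ is injective. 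Both sets have at most $\lceil N/\ell\rceil$ elements, so the number of subcollections is at most $\big(\lceil N/\ell\rceil+1\big)^2\binom{N}{\lceil N/\ell\rceil}^2$, a bound independent of $r(y)$, and your binary-entropy estimate then finishes the proof by taking $\ell$ large. If you replace your ``least element plus scale index'' encoding by this witness-point encoding, the rest of your argument goes through.
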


\begin{proof}
Let $\eta > 0$. Consider $\ell \in \NN$, $y \in Y$ and
an increasing  sequence $(k_i), 1 \leq i \leq r(y)$ of integers (depending on $y$) with $|\cR_{k_1}(z)| \geq \ell$ 
for all $1 \leq i \leq r(y)$
and almost every $z \in Y$. 
%statement of the lemma, with $N$ replaced by $\ell > 2$ for the moment. 
%By assumption, for all $z \in [y]$ and each $1 \leq i \leq r$, 
%we can pick elements $d^y_i(z) \in \bar{\cR}^y_i(z) \setminus \bar{\cR}^y_{i-1}(z)$. 
Since we assume that $\mathfrak{S}(y)$ is a collection of disjoint ${\cR}_{k_i}$-classes contained
in $\cR_n(y)$, there is a center set $\cC(y) \subseteq \cR_n(y)$ such that 
\[
\mathfrak{S}(y) = \big\{ {\cR}_{k_{t(c)}}(c)\,|\, c \in \cC(y)\big\} \text{  with  }  1 \leq t(c) \leq r(y) .
\]
Given $\mathfrak{S}(y)$ (and hence $\cC(y)$), we define a set of points $\cN(y)$ as follows. 
For $c \in \cC(y)$, define $n(c):= i$ as the maximal $1 \leq i \leq t(c)$ such that
$\cR_{k_i}(c) \setminus \cR_{k_{i-1}}(c) \neq \emptyset$, where $\cR_0 = \emptyset$ by 
convention. Then add to the set $\cN(y)$ an arbitrary point $p(c) \in \cR_{k_i}(c) \setminus \cR_{k_{i-1}}(c)$.
%If there is no such point,  
%we have $\cR_{k(c)}(c) \setminus \cR_{k(c)-1}(c) \neq \emptyset$,
%then we add exactly one point out of this latter set to $\cN(y)$. If 
%$\cR_{k(c)}(c) = \cR_{k(c)-1}(c)$, then we leave $\cN(y)$ unchanged. 
By processing in
that way for all $c \in \cC(y)$, we obtain a set $\cN(y)$ with cardinality at most $\abs{\cC(y)}$. We claim that   
we can uniquely recover $\mathfrak{S}(y)$ from knowing the elements in both sets $\cC(y)$ and $\cN(y)$.
Indeed, given $\cC(y)$ and $\cN(y)$, by construction, for every $c \in \cC(y)$, 
there is a minimal (and thus unique) $1 \leq t(c) \leq r(y)$ such that ${\cR}_{k_{t(c)}}(c) \cap \cN(y) \neq \emptyset$. 
%Further, for possible center sets $\cC(y)$,
%we have $|\cC(y)| = |\cN(y)|$ due to the disjointness property of the subcollection. 
Hence, 
the number of possible disjoint subcollections must be bounded by the number of choices for
the two sets $\cC(y)$ and $\cN(y)$. 
Since the
sizes of all classes involved are uniformly bounded from below by $\ell$, the cardinality of 
every $\cC(y)$ and $\cN(y)$ is bounded from above by $\lceil |\cR_n(y)|/\ell \rceil$.
In light of that, we need to bound the expression
\begin{eqnarray*}
\big( \lceil|\cR_n(y)|/\ell\rceil + 1 \big)^2 \, \binom{|\cR_n(y)|}{\lceil |\cR_n(y)| /\ell \rceil}^2.
\end{eqnarray*}
To do so, we use the entropy formula for the Stirling approximation. 
For this purpose, we define 
 \[
E(\ell):= \frac{1}{\ell}\, \log\,\ell + \Big(1 - \frac{1}{\ell} \Big)\,\log\,\Big(1 - \frac{1}{\ell} \Big)^{-1}.  
\]
%We note that any disjoint subcollection of $\cR_n(y)$ by tiling sets of size at least
%$\ell$ (where $|\cR_n(y)|>> \ell$) contains at most $\lceil|\cR_n(y)|/\ell\rceil$ many tiles. 
%Further, there is an 
%injective map from the collection of all disjoint subcollections with their elements 
%having minimum size $\ell$ to the collection of all subsets of $\cR_n(y)$
%consisting of at most $\lceil|\cR_n(y)|/\ell\rceil$ elements. {\bf Injectivity seems to be obtained when for every  set in the disjoint subcollection one chooses a "center" $a\in \cR_n(y)$, and an index $j\le n$ (a "radius") so that the set $\bar{\cR}_i(a)$ is $\cR_j(a)$. }
%Hence, and since $\ell > 2$, 
%the number of these disjoint subcollections must be bounded by
%\begin{eqnarray*}
%\big( \lceil|\cR_n(y)|/\ell\rceil +1 \big)\, \binom{|\cR_n(y)|}{\lceil|\cR_n(y)|/\ell\rceil}.
%\end{eqnarray*}  
%(To see this, fix an element )
%of size at least $\ell$ {\bf the bound for size at least $\ell$ is the same as the bound for size exactly $\ell$ ??????} in a set $\cR_n(y)$ (where $|\cR_n(y)|/2 > \ell$) is bounded 
%by the value of the binomial coefficient $\binom{|\cR_n(y)|}{\lceil|\cR_n(y)|/\ell\rceil}$.
By Stirling's approximation (see e.g.\@ \cite{FS08}, Example~VIII.10), for large enough $n \in \NN$, we get 
\begin{eqnarray*}
\binom{|\cR_n(y)|}{\lceil|\cR_n(y)|/\ell\rceil}^2 \leq \left(\exp \Big( E(\ell/2)|\cR_n(y)| \Big)\right)^2 \leq 2^{4\,E(\ell/2)\,|\cR_n(y)|}. % c\,2^{E(\ell)\,|\cR_n(y)|}.
\end{eqnarray*}  
%for some constant $c^{\prime}$ independent of $n$, $\ell$ and $y$. 

%(The fact that $c^{\prime}$ does not depend on $y$ is due to the  
%uniform boundedness of the values $|\cR_n(y)|$ from below by the sequence $(a_n)$ which tends to infinity.) 
%Now choosing any $c > c^{\prime}$, 
Now increasing $n$ if necessary, we can make sure that
\[
\big( \lceil\cR_n(y)/\ell\rceil +1 \big)^2\, \binom{|\cR_n(y)|}{\lceil|\cR_n(y)|/\ell\rceil}^2 
\leq 2^{5\,E(\ell/2)\,|\cR_n(y)|}.
\]
Since $E(\ell) \to 0$
as $\ell \to \infty$, we can find some $\ell$ such that $E(\ell/2) < \eta/5$, and  
this completes the proof of the claim. 
\end{proof}

We are ready to prove the main lemma of this section. 
It is motivated by Lemma~4.3 in \cite{Li01} and 
provides an analog of it for hyperfinite exhaustions. 

\begin{Lemma}%[Abstract dynamical covering]
 \label{lemma:dyncov1}
Let $(B_k)$ be a sequence of measurable sets in $X \times Y$ such that 
\begin{eqnarray*}
\lambda \times \nu \Bigg( \bigcap_{k=1}^{\infty} \bigcup_{j \geq k} B_j \Bigg) > 0. 
\end{eqnarray*}
Then, for every $\delta > 0$ and $\lambda \times \nu$-a.e.\@ $(x,y) \in X \times Y$,
there is $n(x,y) \in \NN$ for which the following holds true: for each $n \geq n(x,y)$,
there is a disjoint collection of subsets of $\cR_n(y)$ 
\[
\mathfrak{S} = \big\{ \cR_{k_i}(b_i)\,|\, 1 \leq i \leq r \big\}
\]
with $b_i \in \cR_n(y)$ and $1 \leq k_i < n(x,y)$, such that 
\begin{enumerate}[(i)]
\item $\big( \alpha(b_i, y)x, b_i \big) \in B_{k_i}$ for all $i$,
\item $\sum_{C \in \mathfrak{S}} \abs{C} \geq (1-\delta)\,|\cR_n(y)|$.
\end{enumerate}
\end{Lemma}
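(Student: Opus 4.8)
The plan is to push everything into the extended relation $\cR^X$ and then invoke the covering Lemma~\ref{lemma:abstrcomb}. The first move is to read the target in orbit coordinates: for fixed $(x,y)$ the point of the $\cR^X$-class of $(x,y)$ sitting over $z\in\cR(y)$ is exactly $(\alpha(z,y)x,z)$, so placing a block $\cR_k(b)$ at a centre $b$ is admissible, in the sense of (i), precisely when $(\alpha(b,y)x,b)\in B_k$. Write $A:=\bigcap_k\bigcup_{j\ge k}B_j$, so that the hypothesis reads $(\lambda\times\nu)(A)=:c_0>0$. The engine of the whole argument is the assumed ergodicity of $\cR^X$: since $A$ has positive measure, Lemma~\ref{lemma:ergodicity} applied to $\cR^X$ with its generating set $\Phi^X$ produces a finite $D\subset\Phi$ with $(\lambda\times\nu)\big(D^X\circ A\big)\ge 1-\delta_1$, where $\delta_1$ is chosen small relative to $\delta$.

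Next I would lock in the combinatorial data. Feeding $D$ and (a fraction of) $\delta$ into Lemma~\ref{lemma:abstrcomb} fixes the number of levels $M$. The point is then to split the admissible scales into $M$ consecutive, strictly increasing windows. Exploiting the monotonicity $\bigcup_{k\ge N}B_k\downarrow A$, I would pick $N_0<N_1<\dots<N_M$ so large that each window $E_i:=\bigcup_{N_{i-1}\le k<N_i}B_k$ already contains $A$ up to measure $\varepsilon$; then $D^X\circ E_i\supseteq D^X\circ(A\cap E_i)$ has measure at least $1-\delta_1-|D|\varepsilon=:1-\delta'$ \emph{simultaneously} for every level $i$. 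I would declare $n(x,y)$ to be any integer larger than $N_M$, so that the conclusion's scale bound $k_i<n(x,y)$ is automatic. Because the scales increase across levels and the $\cR_k$ are nested, the composite $\cT^{-1}_{k,*}\,\cT_{i,j}$ degenerates to $\cR_{n(i,j)}$, and the temperedness hypothesis (\ref{eqn:tempinfiber}) collapses to the single Følner estimate $|D\circ\cR_{n(i,j)}(y)|\le(1+\delta)|\cR_{n(i,j)}(y)|$, which Lemma~\ref{lemma:hyperfinitefolner}(i) makes available once the scales in the windows are large enough.

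With the windows fixed, the arrays for Lemma~\ref{lemma:abstrcomb} are the shapes $\cT_{i,j}=\cR_{n(i,j)}$ for the scales $n(i,j)$ in the $i$-th window and the centres $\cB_{i,j}(y):=\set{w\in\cR_n(y)\,;\,(\alpha(w,y)x,w)\in B_{n(i,j)}}$, so that $\bigcup_j\cB_{i,j}(y)=\set{w\in\cR_n(y)\,;\,(\alpha(w,y)x,w)\in E_i}$. Applying the pointwise ergodic theorem, Theorem~\ref{thm:ETequiv}, to $\cR^X$ with the indicators $\mathbf 1_{D^X\circ E_i}$, and using $|\cR_n^X((x,y))|=|\cR_n(y)|$ together with class injectivity, yields for $(\lambda\times\nu)$-a.e.\@ $(x,y)$ and all large $n$ that $|D\circ\bigcup_j\cB_{i,j}(y)|\ge(1-\delta'')|\cR_n(y)|$ for each of the finitely many $i$; hence $\min_i|D\circ\bigcup_j\cB_{i,j}(y)|$ is at least $(1-\delta'')|\cR_n(y)|$. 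Lemma~\ref{lemma:abstrcomb} then extracts the disjoint subcollection $\mathfrak{S}(y)$ with $\sum_{C\in\mathfrak{S}(y)}|C|\ge(1-\delta)(1-\delta'')|\cR_n(y)|$; each chosen class is $\cR_{n(i,j)}(w)$ with $(\alpha(w,y)x,w)\in B_{n(i,j)}$ and $n(i,j)<n(x,y)$, giving (i) and (ii) once $\delta_1,\varepsilon,\delta''$ were taken small enough that the product exceeds $1-\delta$.

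I expect the main obstacle to be the delicate balancing performed in the second paragraph: the windows must accomplish two things at once. On the one hand each $E_i$ has to recapture almost all of $A$, so that the $D$-enlarged centre set is large at \emph{every} level and the $\min_i$ in Lemma~\ref{lemma:abstrcomb} stays close to $|\cR_n(y)|$; on the other hand the scales in the windows must be large enough that the temperedness inequality (\ref{eqn:tempinfiber}) holds \emph{almost surely}, not merely on a large set, whereas Lemma~\ref{lemma:hyperfinitefolner}(i) only furnishes $L^1$-smallness of the Følner defect. Reconciling the bounded-scale truncation $k_i<n(x,y)$ forced by the statement with the genuinely tail nature of $\limsup_kB_k$ is the crux; it is made possible only because $\bigcup_{k\ge N}B_k\downarrow A$ lets arbitrarily late and arbitrarily long windows recover $A$, while the freedom to push the scales up drives the $L^1$-defect below a summable threshold, so that a Borel–Cantelli argument upgrades temperedness to an almost sure statement.
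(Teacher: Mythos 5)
Your overall architecture matches the paper's proof: read admissibility in orbit coordinates, apply Lemma~\ref{lemma:ergodicity} to $\cR^X$ with the generating set $\Phi^X$ to obtain a finite $D\subset\Phi$, build $M$ consecutive windows of scales capturing $B^{*}=\bigcap_k\bigcup_{j\ge k}B_j$ up to small measure, feed the resulting indicator into the pointwise ergodic theorem (Theorem~\ref{thm:ETequiv}) to define $n(x,y)$, and finish with the covering Lemma~\ref{lemma:abstrcomb}. But there is a genuine gap at exactly the step you flag as the crux: the verification of hypothesis~\eqref{eqn:tempinfiber}. You propose to obtain $|D\circ\cR_{n(i,j)}(y)|\le(1+\delta)|\cR_{n(i,j)}(y)|$ almost surely from Lemma~\ref{lemma:hyperfinitefolner}(i) plus a Borel--Cantelli upgrade, and this cannot work as described. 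Lemma~\ref{lemma:hyperfinitefolner}(i) is an $L^1$ statement; for a fixed scale $n$ it yields, via Markov, smallness of the F{\o}lner defect only off a set of small \emph{positive} measure, never almost surely. Borel--Cantelli only gives: for a.e.\@ $y$ there is a threshold $N(y)$ beyond which the defect is small at scales $\ge N(y)$ --- but the scales $n(i,j)$ must be fixed uniformly in $y$ \emph{before} Lemma~\ref{lemma:abstrcomb} is invoked (they determine the windows, hence $\tilde{B}^{*}$ and $n(x,y)$), and $N(y)$ is in general unbounded on every conull set. Nor can the bad set be absorbed: if temperedness at the fixed scales fails on a set of positive measure, that set meets almost every $\cR$-class, and the candidate centres in Lemma~\ref{lemma:abstrcomb} range over all of $\cR_n(y)$ with $n$ arbitrarily large, so the hypothesis of that lemma (an almost-sure hypothesis on the array) is genuinely violated; deleting bad centres would require reproving Lemma~\ref{lemma:abstrcomb} with a per-centre hypothesis, and the quantitative alternative (making defect times $\|\cR_n\|$ small) is unavailable since Lemma~\ref{lemma:hyperfinitefolner}(i) comes with no rate while $\|\cR_n\|$ grows with $n$.

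The missing idea --- the one the paper uses --- is that no F{\o}lner estimate is needed at all. Since $D\subset\Phi$ and $\Phi=\bigcup_n\Phi_n$ is the generating family of Proposition~\ref{prop:finitephin}, each $\phi\in D$ has its graph contained in $\cR_{n_0}$ for some finite $n_0$, whence $D\circ\cR_m=\cR_m$ \emph{exactly and almost surely} for every $m\ge n_0$. Starting the second window above $n_0$ (the paper's algorithm, step (3): $\cR_{m_2}$ is $D$-invariant and $m_{i+1}>m_i+N_i$) makes~\eqref{eqn:tempinfiber} hold with equality, so Lemma~\ref{lemma:abstrcomb} applies verbatim. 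Note that this exact invariance is needed a second time, at a point your write-up passes over implicitly: the identification
\[
\bigcup_{j=1}^{N_i} D\circ A_{i,j}(y)=\big\{ b\in\cR_n(y)\,\big|\,\big(\alpha(b,y)x,b\big)\in D^X\circ \bigcup_{j=1}^{N_i} B_{m_i+j-1}\big\},
\]
which converts the ergodic-theorem bound for $\one_{D^X\circ\tilde{B}^{*}}$ into the lower bound on $\min_i\big|D\circ\bigcup_j\cB_{i,j}(y)\big|$, requires $\phi^{\pm1}(b)\in\cR_n(y)$ for $b\in\cR_n(y)$, i.e.\@ again $D\circ\cR_n=\cR_n$. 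With Proposition~\ref{prop:finitephin} substituted for Lemma~\ref{lemma:hyperfinitefolner}(i) at these two points, your argument becomes the paper's proof.
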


\begin{proof}
Let $\delta > 0$. We set 
\[
B^{*} := \bigcap_{k=1}^{\infty} \bigcup_{j \geq k} B_j.
\]
By assumption, $(\lambda \times \nu)(B^{*})> 0$. 
Given the hyperfinite exhaustion $(\cR_n)$, we fix  
$\Phi^{X} \subseteq \operatorname{Aut}(\cR^X)$ satisfying the conclusions of Proposition~\ref{prop:finitephin}.
Since the extended
relation $\cR^X$ is ergodic, and since the set $\Phi^X$
is generating for $\cR^X$,
 we can use Lemma~\ref{lemma:ergodicity}
in order to find a finite set $D\subset \Phi$ such that the lifted automorphisms $D^X \subseteq \Phi^{X}$
 satisfy 
\begin{eqnarray} \label{eqn:useergodicity}
\lambda \times \nu\big( D^X \circ B^{*} \big) \geq 1- \delta/10.
\end{eqnarray}
Thus clearly $\lambda \times \nu\big( B^{*} \big) \geq (1- \delta/10)/\abs{D}$.
%By making inverses $\phi^{-1}$, we make sure that $D$ is a symmetric set
%and we also assume that the identity automorphism is contained in $D$.
%{\bf We define $\delta^{\prime}:= \delta/(4|D|)$ : not necessary.}
Further,
choose $M \in \NN$ such that Lemma~\ref{lemma:abstrcomb} holds
true for the fixed parameters $\delta$ and $D$ for the relation $\cR$. 

 The next step is
to construct two finite increasing integer sequences $(m_i), (N_i)$, $1 \leq i \leq M$,
according to the following algorithm.
\begin{enumerate}[(1)]
\item Define $m_1:= 1$.  
% as the smallest integer such that for every $l \geq m_1$, we have $D \circ \cR_l \subseteq \cR_l$.
\item If $m_i$ has been chosen, then determine $N_i$ large enough such that 
\[
\lambda \times \nu \Big( B^{*} \setminus \bigcup_{j=m_i}^{m_i + N_i} B_j \Big)
< \frac{\delta\cdot\lambda\times \nu(B^\ast)}{10\,M\,|D|}.
\]
\item Further, if $N_i$ has been chosen, choose $m_{i+1}$ large enough such that for
every $l \geq m_{i+1}$, we get
\[
\bigcup_{j < m_i + N_i} D \circ \cR_j^{-1}\cR_{l} \subseteq \cR_l.
\]
%\[
%\Big|\bigcup_{j < m_i + N_i} D \circ \cR_j^{-1}\,\cR_l \Big| \leq (1+ \delta^{\prime}) \big|\cR_l\big|. 
%\]
This is possible by Proposition~\ref{prop:finitephin} and since $D\subset \Phi$.  
%by the choice of $\Phi$ and since $(\cR_n)$ consists of increasing equivalence relations, 
%we can even make sure that $m_{i+1}$ is large enough such that
%\[
%\bigcup_{j < m_i + N_i} D \circ \cR_j^{-1}\cR_{l} \subseteq \cR_l
%\]
%for $l \geq m_{i+1}$. 
%Note that $D\subset \Phi$ satisfies 
In fact $D\circ \cR_n =\cR_n$ for all sufficiently large $n$. So it suffices that $m_{i+1} > m_i+N_i$ and also that $\cR_{m_{2}}$ (and hence each $\cR_{m_{i+1}}$) is invariant under $D$.

\end{enumerate}
With the sequences $(m_i), (N_i)$ at our disposal, we define
\[
\tilde{B}^{*} := \bigcap_{i=1}^M \bigcup_{j=m_i}^{m_i + N_i} B_j. 
\]
Clearly 
\[
\lambda \times \nu \big(   B^{*} \setminus  \tilde{B}^{*} \big) \leq
%< \frac{\delta\cdot\lambda\times \nu(B^\ast)}{10}.
\sum_{i=1}^M \lambda \times \nu \Big( B^{*} \setminus \bigcup_{j=m_i}^{m_i+ N_i} B_j \Big)
< \frac{\delta\cdot\lambda\times \nu(B^\ast)}{10\,|D|},
\]
and so 
\[
\lambda \times \nu \big( D^X \circ  B^{*} \setminus D^X \circ \tilde{B}^{*} \big) <
\frac{\delta\cdot\lambda\times \nu(B^\ast)}{10}.
\]
Consequently, 
\[
\lambda \times \nu\big( D^X \circ \tilde{B}^{*} \big) \geq \nu\big( D^X \circ {B}^{*} \big) - \frac{\delta\cdot\lambda\times \nu(B^\ast)}{10}
\geq 1-\frac{\delta}{10}- \frac{\delta\cdot\lambda\times \nu(B^\ast)}{10}\ge 1-\frac{\delta}{5}.   
\]
Now by the pointwise ergodic theorem (Theorem~\ref{thm:ETequiv}), for $\lambda \times \nu$-a.e.\@ 
$(x,y)$, we can define $n(x,y)$ as the smallest integer value greater than $m_M + N_M$
 such that for all $n \geq n(x,y)$,
we have 
\begin{eqnarray} \label{eqn:ErgThm}
\cA\big( n, \one_{D^X\circ \tilde{B}^{*}} \big)(x,y):= 
\big| \cR_n(y) \big|^{-1} \sum_{z \in \cR_n(y)} \one_{D^X \circ \tilde{B}^{*}}\big( (\alpha(z,y)x,z) \big) > 1 - \frac{\delta}{4}.
\end{eqnarray} 
We fix some pair $(x,y)$ satisfying this condition, as well as $n \geq n(x,y)$. For $1 \leq i \leq M$ and $1 \leq j \leq N_i$, set
${\cT}_{i,j}:= \cR_{m_i + j - 1}$ and 
\[
A_{i,j}(y) := \big\{ b \in \cR_n(y) \,|\, \big( \alpha(b,y)x, b \big) \in B_{m_i + j -1} \big\}.
\]
By step~(3) of the algorithm and since $n > m_M + N_M$, 
we have $D \circ \cR_n \subseteq \cR_n$. In particular, each $\phi\in D$ gives rise to bijections $\phi :\cR_n(y)\to \cR_n(y)$, for a.e. $y\in Y$.

%using also the symmetry of the set $D$ (and 
%the trivial automorphism being contained in $D$), 
%we know that we can find a set $\cR_n^{\prime}(y) \subseteq \cR_n(y)$
%with $|\cR_n^{\prime}(y)| \geq (1-|D|\,\delta^{\prime})|\cR_n(y)|$ and at the same time 
%$D \circ \cR^{\prime}_n(y) \subseteq \cR_n(y)$.
Applying the transformations in the set $D$ to the sets $A_{i,j}(y)$, we then obtain 
%Since $\big|D \circ \cR_n \big| \leq (1+\delta^{\prime}) \big|\cR_n\big|$ 
%by the step (3) of the construction algorithm for the sequences $(m_i), (N_i)$
%and $n > m_M + N_M$,
%we obtain
\begin{eqnarray*}
\end{eqnarray*}
\begin{eqnarray*}
D \circ \bigcup_{j=1}^{N_i} A_{i,j}(y) &=& \bigcup_{\phi \in D} \big\{ \phi(b) \,|\, b \in \cR_n(y), \big( \alpha(b,y)x, b \big) 
\in \bigcup_{j=1}^{N_i} B_{m_i + j -1} \big\} \\
&=& \big\{ b \in \cR_n(y)\,|\,\exists\, \phi \in D,\, b^{\prime} \in \cR_n(y): \, b = \phi(b^{\prime}),\, \big( \alpha(b^{\prime}, y)x, b^{\prime} \big)
\in \bigcup_{j=1}^{N_i} B_{m_i + j -1} \big\} .
\end{eqnarray*}
Together with~\eqref{eqn:ErgThm}, this yields for all $1 \leq i \leq M$ :
%and the symmetry of $D$ (and the identity automorphism being 
%contained in $D$), this yields
\begin{eqnarray*}
\Big| \bigcup_{j=1}^{N_i} D \circ A_{i,j}(y) \Big| &=&  \big| \big\{ b\in \cR_n(y)\,|\, \exists\, \phi \in D:\,
 b^\prime = \phi^{-1}(b), \, \big( \alpha(b^{\prime}, y)x, b^{\prime} \big)
\in \bigcup_{j=1}^{N_i} B_{m_i + j -1}  \big\}\big|   \\ \text{(using (\ref{phi^X}))}
%&\quad& - \big|\cR_n(y) \setminus \cR^{\prime}_n(y) \big| \\
&=& \big| \big\{ b \in \cR_n(y)\,|\, \big( \alpha(b,y)x, b \big) \in D^X \circ \bigcup_{j=1}^{N_i} B_{m_i + j -1} \big\} \big| \\\text{(since $\tilde{B}^\ast$ is an intersection)}
%- \delta / 4\,\big| \cR_n(y) \big| \\
&\geq& \big| \cR_n(y) \big|\, \cA\big(n, \one_{D^X \circ \tilde{B}^{*}}  \big)(x,y) \\ %-  \delta/4\,\big| \cR_n(y) \big| \\
(\text{by the pointwise ergodic theorem}) 
&\geq&\Big( 1- \frac{\delta}{4} \Big) \,\big| \cR_n(y) \big|\,.
\end{eqnarray*}
  %(Here, all elements denoted by $b^{\prime}$ are elements in $\cR^{\prime}_n(y)$.)\\

We finally apply Lemma~\ref{lemma:abstrcomb} with $\delta/4$ instead of $\delta$ to the arrays ${\cT}_{i,j}$ and $A_{i,j}(y)$, where $1 \leq i \leq M$ and
$1 \leq j \leq N_i$. The assumption of the Lemma  is indeed satisfied, namely $\cT_{i,j}=\cR_{m_i+j-1}$ satisfy (5.1) by the construction of $m_2$, which guarantees $D\circ \cR_m=\cR_m$ for all $m \ge m_2$. Furthermore, we have just shown that 
$$\min_{1\le i \le M}\Big| \bigcup_{j=1}^{N_i} D \circ A_{i,j}(y) \Big|\ge \Big( 1- \frac{\delta}{4} \Big) \,\big| \cR_n(y) \big|$$
and hence 
Lemma~\ref{lemma:abstrcomb} implies that there is a disjoint subcollection
\[
\mathfrak{S}(y) \subseteq \big\{\cT_{i,j}(b) \,|\, b \in A_{i,j}(y) \big\}
\]
with 
\[
\sum_{C \in \mathfrak{S}(y)} \abs{C} \geq (1-\delta)\,\big| \cR_n(y) \big|,
\]
as desired. 
\end{proof}

The following is an immediate consequence from the previous 
lemma. 

\begin{Lemma}%[Dynamical covering]
 \label{lemma:dyncov2}
Let $(\cR_n)$ be a bounded hyperfinite exhaustion, and keep the assumptions of the previous lemma. Then, for the number
\begin{eqnarray*}
h:= \operatorname{ess-inf}_{x,y} \, \liminf_{n \to \infty} \frac{\cJ\big( \cP^{\cR_n(y)}(x)\big)}{|\cR_n(y)|},
\end{eqnarray*}
the following holds true. 
For every $\delta > 0$, each $N \in \NN$ and $\lambda \times \nu$-a.e.\@ $(x,y)$,
there is a number $n(x,y) \in \NN$ such that for all $n \geq n(x,y)$, we can find
a disjoint collection (depending on both $x$ and $y$)
\[
\mathfrak{S} = \big\{ \cR_{k_i}(b_i) \,|\, 1 \leq i \leq r \big\}
\]
such that all $b_i \in \cR_n(y)$ and $N \leq k_i < n(x,y)$ and further,
\begin{enumerate}[(i)]
\item for all $i$, 
\[
\frac{\cJ\big( \cP^{\cR_{k_i}(b_i)}\big( \alpha(b_i,y)x\big) \big)}{|\cR_{k_i}(b_i)|} \leq h + \delta, 
%\quad \frac{|\cR_{k_i}(b_i)\setminus \cR_{k_i}(b_i)|}{|\cR_{k_i}(b_i)|} < \delta,
\]
\item  $\sum_{C \in \mathfrak{S}} \abs{C}\geq (1-\delta)\,|\cR_n(y)|$.  
\end{enumerate}
\end{Lemma}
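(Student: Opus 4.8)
The plan is to deduce this lemma directly from Lemma~\ref{lemma:dyncov1} by making a judicious choice of the sequence of measurable sets $(B_k)$ in $X \times Y$. Fixing $\delta > 0$ and $N \in \NN$, I would set $B_k := \emptyset$ for $k < N$ and, for $k \geq N$,
\[
B_k := \Big\{ (x',y') \in X \times Y \ :\ \frac{\cJ\big(\cP^{\cR_k(y')}(x')\big)}{|\cR_k(y')|} \leq h + \delta \Big\}.
\]
Each $B_k$ is measurable, since the bounded subset function $\cR_k$ depends measurably on $y'$ and hence $(x',y') \mapsto \cJ(\cP^{\cR_k(y')}(x'))$ is jointly measurable. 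The point of this choice is that, substituting $(x',y') = (\alpha(b_i,y)x, b_i)$ and using $\cR_k(y') = \cR_k(b_i)$, the membership condition $(\alpha(b_i,y)x, b_i) \in B_{k_i}$ appearing in Lemma~\ref{lemma:dyncov1}~(i) unfolds to exactly inequality~(i) of the present lemma. Moreover, since $B_k$ is empty for $k < N$, any index $k_i$ that is actually used must satisfy $k_i \geq N$, which produces the required range $N \leq k_i < n(x,y)$.

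The second step is to verify the hypothesis of Lemma~\ref{lemma:dyncov1}, namely that $\lambda \times \nu\big(\bigcap_{k} \bigcup_{j \geq k} B_j\big) > 0$. The set $\bigcap_k \bigcup_{j \geq k} B_j$ consists precisely of those $(x',y')$ for which $\cJ(\cP^{\cR_j(y')}(x'))/|\cR_j(y')| \leq h + \delta$ holds for infinitely many $j$, and deleting the finitely many empty levels $j < N$ has no effect on this condition. Writing $L(x',y') := \liminf_{n} \cJ(\cP^{\cR_n(y')}(x'))/|\cR_n(y')|$, this infinitely-often set contains $\{(x',y') : L(x',y') < h + \delta\}$. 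Since $h$ is by definition the essential infimum of $L$ over $X \times Y$, for every $\delta > 0$ the set $\{L < h + \delta\}$ has strictly positive $\lambda \times \nu$-measure; otherwise $L \geq h + \delta$ almost everywhere, contradicting the minimality of the essential infimum. Hence the required positivity holds.

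Finally, I would apply Lemma~\ref{lemma:dyncov1} with this sequence $(B_k)$: for $\lambda \times \nu$-a.e.\ $(x,y)$ and all $n \geq n(x,y)$ it produces a disjoint collection $\mathfrak{S} = \{\cR_{k_i}(b_i)\}$ with $b_i \in \cR_n(y)$, $(\alpha(b_i,y)x, b_i) \in B_{k_i}$, and $\sum_{C \in \mathfrak{S}} |C| \geq (1-\delta)|\cR_n(y)|$. Translating the membership condition back gives (i), the covering estimate gives (ii), and the emptiness of $B_k$ for $k < N$ enforces $N \leq k_i < n(x,y)$. I do not expect a substantive obstacle at this stage, since the genuine combinatorial work is carried out in Lemma~\ref{lemma:dyncov1}; the only points requiring care are the joint measurability of the normalized information function and the clean identification of the essential-infimum level set with a positive-measure $\limsup$ set, which is precisely where I would be most attentive.
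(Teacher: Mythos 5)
Your proof is correct and follows essentially the same route as the paper: the paper likewise deduces the lemma by applying Lemma~\ref{lemma:dyncov1} with $B_k$ taken to be the sub-level sets of the normalized information function at level $h+\delta$, handling the constraint $k_i \geq N$ by re-indexing the exhaustion as $(\cR_n)_{n\geq N}$ rather than by your equivalent device of padding with empty sets. Your verification that the limsup set has positive measure (via the essential infimum defining $h$) is the same argument the paper leaves implicit, so your write-up is in fact slightly more complete.
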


\begin{proof}
%By assumption, we have 
%\begin{eqnarray*}
%\lim_{n \to \infty} \frac{\big| \cF_n(y)\,\setminus\,\cR_n(y) \big|}{\big| \cR_n(y) \big|} = 0
%\end{eqnarray*}
%for all $y \in Y_0$, where $Y_0$ is a $\nu$-conull subset of $Y$.
We apply the previous Lemma~\ref{lemma:dyncov1} to the sequence 
$(\cR_n)_{n \geq N}$ and the sets 
\[
B_k := \Big\{ (x,y) \in X \times Y\, \big| \, \frac{\cJ\big( \cP^{\cR_{N+k}(y)}(x) \big)}
{|\cR_{N + k}(y)|} \leq h + \delta\,\, %\frac{|\cF_{N+k}(y) \setminus \cR_{N+k}(y)|}{|\cR_{N+k}(y)|} < \delta
 \Big\}.
\]
%The observation that $\cP^{\cR_{N+k}(b)}(x) = \cP^{\cR_{N+k}(y)}\big( \alpha(b,y) x\big)$ 
%for $b \in \cR_n(y)$
%concludes the proof of the lemma. 
\end{proof}

We are now in position to prove the Shannon-McMillan-Breiman Theorem. % ~\ref{thm:MAIN_SMBhyp}.
To do so, we combine the previous results of this section, motivated by  
the proof of Theorem~1.3 in~\cite{Li01}.

\begin{proof}[{Proof of Theorem~\ref{thm:MAIN_SMBhyp}}]
%Fix a pointwise approximate hyperfinite exhaustion $(\cF_n)$. 
%Then, by definition, there is a hyperfinite
%exhaustion $(\cR_n)$ with $\cF_n \supseteq \cR_n$ such that 
%\begin{eqnarray} \label{eqn:approxaux}
%\lim_{n \to \infty} \frac{\big| \cF_n(y)\,\setminus\,\cR_n(y) \big|}{\big| \cR_n(y) \big|} = 0
%\end{eqnarray}
%for all $y \in Y_0$, 
%%and $z \in [y]$, 
%where $Y_0$ is a $\nu$-conull subset of $Y$.  
As before, we set
\begin{eqnarray*}
h:= \operatorname{essinf}_{x,y} \, \liminf_{n \to \infty} \frac{\cJ\big( \cP^{\cR_n(y)}(x)\big)}{|\cR_n(y)|}.
\end{eqnarray*}
If $h= \infty$, then there is nothing left to show. So assume that $h < \infty$. 
We show that for a.e.\@ $(x,y)$,
\begin{eqnarray} \label{eqn:mainclaim}
\limsup_{n \to \infty} \frac{\cJ\big( \cP^{\cR_n(y)}(x) \big)}{|\cR_n(y)|} \leq h. 
\end{eqnarray}
To this end, fix $\delta > 0$. 
We find $N \in \NN$ large enough such that 
Lemma~\ref{lemma:comb} holds for $\eta = \delta$ and $\ell = N$.
%to pick $N_0$ large enough so that
%\begin{eqnarray} \label{eqn:growth}
%\big| \cR_{j}(y) \big| \geq \max\big\{ 8\delta^{-1}\,\big( \log\, j \big); N \big\}
%\end{eqnarray}
%for every $j \geq N_0$.
By the growth assumption on the $(\cR_n)$, we find $N_1 \in \NN$ such that for 
a full measure set of $y$'s, we have $|\cR_k(z)| \geq N$ for all $k \geq N_1$ and
each $z \in [y]$.   
We have seen in Lemma~\ref{lemma:dyncov2} that for 
$\lambda \times \nu$-almost every $(x,y)$, there is $n(x,y) \in \NN$, $n(x,y) > N_1$,
such that for $n > n(x,y)$, we obtain a special subcollection 
$\mathfrak{S}=\mathfrak{S}(x,y)$ of
subsets of $\cR_n(y)$. Namely, this collection 
\begin{itemize}
\item is disjoint, 
\item satisfies $\sum_{C \in \mathfrak{S}}\abs{C} \geq (1-\delta)\,|\cR_n(y)|$,
\item Each $C \in \mathfrak{S}$ is of the form $\cR_{k}(b)$ with $N_1 \leq k< n(x,y)$ and $b \in \cR_n(y)$, which 
by the choice of $N_1$ implies that $|\cR_k(b)| \geq N$ for all $k \geq N_1$, 
\item %for the sets $\cF_k(b)$ for which 
writing $C= \cR_k(b) \in \mathfrak{S}$, the following holds
\begin{eqnarray} \label{eqn:infaux}
\frac{\cJ\big( \cP^{\cR_k(b)}\big(\alpha(b,y)x\big) \big)}{|\cR_k(b)|} \leq h + \delta\,.
%\quad \frac{|\cF_k(b) \setminus C}{|C|} < \delta.
\end{eqnarray}
\end{itemize}
By increasing $n$ if necessary, due to $|\cR_n(y)| \to \infty$, we can assume that % by the growth condition on $(\cR_n)$ that 
\begin{eqnarray} 
\big| \cR_{n}(y) \big| \geq  8\delta^{-1}\,( \log\, n ). \label{eqn:growth} 
%\big| \cR_n(y) \big| &\leq& (1+\delta)\,\big| \cR_n(y)\big|. \label{eqn:growth2}
\end{eqnarray}
%{\bf This assumes there is a lower bound on the size of all classes, and so a bounded relation cannot just be extended trivially.  
Let us now fix $x$, $y$, $n=n(x,y)$ and $\mathfrak{S}(x,y)=\mathfrak{S}$ satisfying all the conditions stated above. 

We first note that since $\cR_n(y)$ is the disjoint union of the sets $C = \cR_k(b_C) \in \mathfrak{S}$ and  
$\cG_n(y):= \cR_n(y) \setminus \bigcup_{C \in \mathfrak{S}} \cR_k(b_C)$, it follows that the partition 
$\cP^{\cR_n(y)}$ is given by $\left(\bigvee_{C \in \mathfrak{S}} \cP^{\cR_k(b_C)}\right)\vee \cP^{\cG_n(y)}$. 
Therefore for any point $x\in  X$, its $\cP^{\cR_n(y)}$-name arises as the intersection of  
the $\cP^{\cR_k(b_C)}$-name of $\alpha(b_C,y)x$ where $C = \cR_k(b_C) \in \mathfrak{S}$,  and of the 
$\cP$-name of $\alpha(b,y)x$ for  every 
%{\bf what is $d$ here ??????????}  
$b \in \cG_n(y):= \cR_n(y) \setminus \bigcup_{C \in \mathfrak{S}} \cR_k(b_C)$.

% the set of  atoms in the partition (of $X$) given by  $\cP^{\cR_n(y)}$, namely the set of all possible $\cP^{\cR_n(y)}$-names 
%for some $x$ with $\cR_n(y)$ being tiled in the way described above, cf.\@ inequality~\eqref{eqn:infaux}). 

%Using this observation, we show that 
%our collection $\cS$ allows for at most $2^{[(h+\delta)|\cR_n(y)| + \log|\cP|\delta]|\cR_n(y)|}$ 
%of the $\cP^{\cR_n(y)}$-names in $\cK_n(y)$. \\

Consider the partition $\cP^{\cR_n(y)}$ (which is finer than each partition $\cP^{\cR_k(b)}$ when $b\in \cR_n(y)$) and define a set of atoms in it which we denote by $K_n(x,y)$.
Namely, for our fixed $x$, we consider the disjoint collection $\mathfrak{S}(x,y)$ of subsets of $\cR_n(y)$, and we put an atom of $\cP^{\cR_n(y)}$ %containing $x$ (namely its $\cP^{\cR_n(y)}$-name)
 in $K_n(x,y)$ provided that it arises as the intersection of elements of the partitions $\cP^{\cR_k(b)}$ which satisfy inequality~\eqref{eqn:infaux} (where we allow any $\cR_k(b)=C \in \mathfrak{S}(x,y)$), with elements in the partition $\cP^{\cG_n(y)}$.
 % where $\cG_n(y):= \cR_n(y) \setminus \bigcup_{C \in \mathfrak{S}} \cR_k(b_C)$. 

Now, for every $\cR_k(b)=C \in \mathfrak{S}(x,y)$,  
%$\cP^{\cR_k(b)}$ is a partition of the 
%space $X$ satisfying the bound on its information function stated in 
inequality \eqref{eqn:infaux} gives a lower bound on the measure of some of the atoms in the partition, and hence an upper bound on their number. It follows that for each such $C$,
there are at most $2^{(h + \delta)|\cR_k(b)|}$ atoms of $\cP^{\cR_k(b)}$ (namely $\cP^{\cR_k(b)}$-names) for which the inequality~\eqref{eqn:infaux}
can hold true.

Consequently, %for almost every $x$, 
the number of atoms of $\cP^{\cR_n(y)}$
which appear as elements in $K_n(x,y)$ 
%which are in accordance
%with the properties of $\mathfrak{S}$ stated above (depending on $x$ and $y$)
 is bounded by 
\[
\prod_{C \in \mathfrak{S}} 2^{(h+\delta)\,|\cR_k(b)|} \cdot |\cP|^{|\cG_n(y)|} =
\prod_{C \in \mathfrak{S}} 2^{(h + \delta)\,|C|} \cdot 
%\big|\cP\big|^{|\cR_n(y) \setminus \cR_n(y)| +
\abs{\cP}^{ |\cR_n(y) \setminus \bigcup \mathfrak{S}|}.
\]
Since $\cR_n(y)$ is disjointly $(1-\delta)$-covered by the elements in $\mathfrak{S}$, 
we conclude that for a fixed $x \in X$, 
%with the second inequality in~\eqref{eqn:infaux} and with~\eqref{eqn:growth2} 
there are at most 
\[
2^{(h+\delta) \,|\cR_n(y)|} \cdot |\cP|^{\delta\,|\cR_n(y)|}
\]
many elements in $K_n(x,y)$. % which are in accordance with the properties of $\mathfrak{S}(x,y)$.

$K_n(x,y)$ depends on both $x$ and $y$, having been constructed using the collection $\mathfrak{S}(x,y)$ of subsets on $\cR_n(y)$. Now, we define another collection of atoms of $\cP^{\cR_m(y)}$, which we denote by $K_m(y)$. It  consists of all the atoms in the sets $ K_m(x,y)$ as $x$ varies in $X$, provided that $n(x,y)$ satisfies the conditions stated above and in addition $n(x,y)\le m$.  By Lemma~\ref{lemma:comb} (and the choices for $N$ and $N_1$), for all $m\ge m(y)$ the number of possibilities for $\mathfrak{S}(x,y)$ is bounded 
by $2^{\delta|\cR_m(y)|}$. % for some universal constant ${c}>0$. 
Hence, we obtain 
\[
\big| K_m(y) \big| \leq 2^{( h + 2\delta + \delta\,\log|\cP| )\,|\cR_m(y)|}.
\] 
%with the estimate being dependent on $y$, but independent of $x$. 
We now consider the sets
\[
X_m(y) := \Big\{ x \in X\,\big| \, \frac{\cJ\big( \cP^{\cR_m(y)}(x)\big)}{|\cR_m(y)|} > h + 3 \delta + \delta\,\log|\cP|\Big\}.
\]

Consider $\bigcup K_m(y)$, the union of all $\cP^{\cR_m(y)}$-atoms which belong to $K_m(y)$. 
Then,
\[
\lambda\big( X_m(y) \cap \bigcup K_m(y) \big) \leq \abs{K_m(y)}\,2^{-(h + 3\delta + 
\delta\,\log|\cP|)\,|\cR_m(y)|}
\leq 2^{-\delta\,|\cR_m(y)|}.
\]
It follows from the growth condition~\eqref{eqn:growth} stated above that
\[
2^{-\delta\,|\cR_m(y)|} \leq 2^{\log\, m^{-8}} \leq \frac{1}{m^2}
\]
for large enough $m$. This implies that $\sum_{m=1}^{\infty} 2^{-\delta|\cR_m(y)|} < \infty$. 
Thus, for $\nu$-almost-every $y \in Y$, we can apply the Borel-Cantelli lemma and
obtain that for $\lambda \times \nu$-almost every $(x,y)$, 
$x \notin X_m(y) \cap \bigcup K_m(y) $ if $m$ is large enough. On the other hand, we deduce from
Lemma~\ref{lemma:dyncov2} that for large enough $m$ (depending on $x$ and $y$), 
$x \in \bigcup K_m(y)$. This implies that for $\lambda \times \nu$-a.e. $(x,y)$,
and $m$ large enough, we must have $x \notin X_m(y)$, which means by definition of $X_m(y)$ that
\[
\limsup_{m \to \infty} \frac{\cJ\big( \cP^{\cR_m(y)}(x) \big)}{|\cR_m(y)|} \leq h + 3\delta + \delta\,\log|\cP|. 
\]
Letting $\delta \to 0$ yields \eqref{eqn:mainclaim}. This  establishes almost everywhere
convergence, as stated. Since the integrals of $\cJ\big( \cP^{\cR_m(y)}(x)\big)/\abs{\cR_m(y)}$ over $X\times Y$ converge to the orbital entropy of $\cP$ by Theorem~\ref{thm:MAINcocycleentropy}, we have $h = h^{*}_{\cP}(\alpha)$, as claimed.
\end{proof}

We conclude this section with the proof of Corollary~\ref{cor:MAIN_SML1}.
The proof follows along the lines of the $L^1$-convergence case proved  in
\cite[Thm. 4.1]{Li01}.

\begin{proof}[Proof of Corollary~\ref{cor:MAIN_SML1}]
By Theorem~\ref{thm:MAIN_SMBhyp}, the normalized information function converges 
pointwise almost surely (w.r.t.\@ the measure $\lambda \times \nu$) 
to $h^{*}_{\cP} = h^{*}_{\cP}(\alpha)$. Fix $\varepsilon > 0$, and  
for $n \in \NN$ and fixed $y \in Y$, define 
\begin{eqnarray*}
\mathcal{C}_n(y) := \Big\{ x \,\big| \, h^{*}_{\cP} -
\varepsilon \leq \frac{\cJ\big( \cP^{\cR_n(y)}(x) \big)}{\big| \cR_n(y) \big|} 
\leq h^{*}_{\cP} + \varepsilon \Big\}.
\end{eqnarray*}
Integration over $X$ gives 
\begin{eqnarray*}
\big( h^{*}_{\cP} - \varepsilon \big)\,\lambda\big( \cC_n(y) \big) 
&\leq& \int_{\cC_n(y)} \frac{\cJ\big( \cP^{\cR_n(y)}(x) \big)}{|\cR_n(y)|}\,d\lambda(x) \\
&\leq& \int_X \frac{\cJ\big( \cP^{\cR_n(y)}(x) \big)}{|\cR_n(y)|}\,d\lambda(x) \\ 
&\leq& \big( h^{*}_{\cP} + \varepsilon \big)\,\lambda\big( \cC_n(y) \big) + 
\int_{X \setminus \cC_n(y)} \frac{\cJ\big( \cP^{\cR_n(y)}(x) \big)}{|\cR_n(y)|}\,d\lambda(x).
\end{eqnarray*}
We define a new measure $\lambda^{*}$ on $X\setminus \cC_n(y)$ by setting
\[
\lambda^{*}(A) := \frac{\lambda(A)}{\lambda(X \setminus \cC_n(y))}.
\]
Then, since $\cC_n(y)$ is a disjoint union of atoms of the partition $\cP^{\cR_n(y)}$ of $X$, 
\begin{eqnarray*}
\int_{X \setminus \cC_n(y)} \frac{\cJ\big( \cP^{\cR_n(y)}(x) \big)}{|\cR_n(y)|}\,d\lambda(x)
&=& \lambda\big( X \setminus  \cC_n(y) \big) \, \int_{X \setminus \cC_n(y)} \frac{\cJ^{*}\big( \cP^{\cR_n(y)}(x) \big)}
{\big| \cR_n(y) \big|}\,d\lambda^{*}(x) \\
&& \quad - \lambda\big( X \setminus \cC_n(y) \big)\, \log\big( \lambda(X \setminus \cC_n(y)) \big), 
\end{eqnarray*}
where $\cJ^{*}$ denotes the information function with respect to $\lambda^{*}$. 
Now since the integral on the right hand side in the inequality above is just
the Shannon entropy of the partition $\cP^{\cR_n(y)}$ with respect to the 
measure $\lambda^{*}$, we arrive at
\begin{eqnarray*}
\int_{X \setminus \cC_n(y)} \frac{\cJ\big( \cP^{\cR_n(y)}(x) \big)}{|\cR_n(y)|}\,d\lambda(x) \leq 
\lambda\big( X \setminus \cC_n(y) \big)\,\log|\cP| - \lambda\big( X \setminus \cC_n(y) \big)\, 
\log\big( \lambda(X \setminus \cC_n(y)) \big).
\end{eqnarray*}
Clearly, by Theorem~\ref{thm:MAIN_SMBhyp}, for $\nu$-almost every $y\in Y$, 
the latter expression tends to zero as $n \to \infty$. 
Now sending $\varepsilon \to 0$ yields the first assertion of the claimed statement. 
For the second statement note that due to the dominated convergence theorem
(with dominating function $g(y):= 2\,|\cP|$) 
we have 
\begin{eqnarray*}
\lim_{n \to \infty} \int_Y \int_X \Big| \frac{\cJ\big(
\cP^{\cR_n(y)}(x) \big)}{|\cR_n(y)|} - h^{*}_{\cP} \Big|\,d\lambda(x)\,d\nu(y) = 0. 
\end{eqnarray*}
This concludes the proof of the corollary. 
%{\bf The same argument works for $L^1$ convergence in $X$ a.e. $y$  and in $X\times Y$ ???????}
\end{proof}

\section{Amenable relations, injective cocycles and ergodic extensions}\label{sec:ergodicity}

\subsection{Groups admitting injective cocycles}\label{sec:injective} 

As usual, Let $(Y,\nu)$ be a probability space, and  let $\cR\subset Y\times Y$ be a p.m.p.\@ Borel equivalence relation with $\cR$-invariant probability measure $\tilde{\nu}$, such that $\cR=\bigcup_{n\in \NN}\cR_n$ is hyperfinite, or equivalently, $\cR$ is amenable in the sense of \cite{CFW81}.  
Let $(X,\lambda)$ be a p.m.p.\@ action of a countable group $\Gamma$ and let $\alpha : \cR\to  \Gamma$ be a measurable cocycle. Let $\cR^X$ denote the extended relation on $(X\times Y,\lambda\times \nu)$. 

%, we can define a equivalence relation $\cR^\alpha$ on $X\times Y \times X \times Y$ via 
%$(x^\prime,y^\prime)\cR^\alpha(x,y)$ if and only of $x^\prime\cR x $ and $y^\prime=\alpha(x^\prime, x)y$.  The extended relation $\cR^\alpha$ is  p.m.p. with respect to $\lambda\times \mu$.  The relation $\cR^\alpha$ is an extension of the relation $\cR$ via the map $\pi : (x,y)\mapsto x$, and this is a class-bijective extension which is also a measurable factor map, namely $\pi$ maps the measure $\lambda\times \mu$ to $\lambda$.  
%$\cR^\alpha$ is also amenable, with respect to suitable lifts of the F\o lner subset functions on $\cR$ and a suitable extension of the countable generating set of $\cR$ to $\cR^\alpha$.  
%Recall that the cocycle is called weak-mixing, namely 
%that for every p.m.p. ergodic action $(Y,\mu)$ of $\Gamma$,  the extended relation $\cR^\alpha$ is ergodic.
%As noted in the proof of Theorem~\ref{thm:MAINcocycleentropy}, 
As note in the introduction, by a result of Danilenko \cite[Cor. 2.7]{Da01}, given the ingredients just listed the following limit exists 
\begin{eqnarray*}
h_{\cP}^{*}(\alpha) := \lim_{n \to \infty} \int_Y \frac{h^{\cP}\big( \cR_n \big)(y)}{\big| \cR_n(y)\big|}\, d\nu(y).
\end{eqnarray*}
%However, in this generality we cannot say too much about its %properties. 
A meaningful entropy invariant arises when we assume that the hyperfinite exhaustion is by bounded finite relations, and the cocycle is class injective. As we will see shortly, in fact {\it all} countable groups admit a class injective cocycle defined on a p.m.p. ergodic hyperfinite relation, but let us first note some familiar explicit examples for this property.

%The first condition can always be satisfied, and thus cocycle %entropy exists as an invariant with the properties stated 
%for all countable groups $\Gamma$ admitting an injective cocycle %defined on a hyperfinite relation. 
%In particular, cocycle entropy then coincides with Rokhlin entropy %and finitary entropy provided that the $\Gamma$-action on $X$ is %ergodic and essentially free. 
%It may be the case that every countable group admits an injective %cocycle defined on an ergodic hyperfinite relation, but this %remains to be seen. 

 %To indicate briefly that the class of groups in question is extensive indeed, let us note the following constructions of cocycles on amenable actions.
 
 {\bf 1) Amenable groups.}
%Clearly, the previous construction fails to provide an injective cocycle on an amenable relation when the Poisson boundary is the trivial one point space. Every amenable group admits at least one random walk 
%with trivial Poisson boundary, and for nilpotent groups, all random walks have that property. However, 

Let $\Gamma$ be amenable, let $(Y,\nu)$ be any p.m.p.\@ action of $\Gamma$, and assume that the action is essentially free. For example, we can take $Y$ to the Bernoulli action of $\Gamma$ on $\set{0,1}^\Gamma$. 
  Let $\cR=\cO_\Gamma$ be the orbit relation of $\Gamma$ on $Y$, and then there is a cocycle $\alpha : \cR \to \Gamma$ given by $\alpha(\gamma y,y)=\gamma$. This cocycle is indeed an injective cocycle on a p.m.p.\@ amenable equivalence relation $\cR$, by amenability of $\Gamma$ and freeness of the action. Given a p.m.p.\@ action of $\Gamma$ on a space $(X,\lambda)$, clearly $\Gamma$ acts on the product $(X\times Y, \lambda\times \nu)$, and the orbit relation of $\Gamma$ in the product coincides with the extended relation $\cR^X$ on $X\times Y$ that we have used throughout the paper. Thus $\alpha :\cR\to \Gamma$ is an injective cocycle defined on a p.m.p.\@ amenable relation.

 {\bf 2) The Maharam extension of the Poisson boundary.}  For every countable infinite group $\Gamma$, and every generating probability measure $\mu$ on $\Gamma$, the Poisson boundary $B=B(\Gamma,\mu)$ is an amenable action of $\Gamma$, in the sense defined by Zimmer \cite{Zi78}, or equivalently, in the sense of \cite{CFW81}. Let $\eta$ denote the stationary measure on $B$, and $r_\eta(\gamma,b)=\frac{d\eta\circ \gamma}{d\eta}(b)$ the Radon-Nikodym derivative cocycle of $\eta$, so that $r_\eta : \Gamma \times B \to \RR^\ast_+$. The Maharam extension of $B$ by the cocycle $r_\eta$ is the $\Gamma$-action on $B\times \RR$ given by 
$\gamma(b,t)=(\gamma b, t-\log r_\eta(\gamma,b))$, and this action preserves the measure $\eta\times \theta$, where $d\theta(t)=e^t dt$ and $dt$ is Lebesgue measure on $\RR$. This action is again an amenable action of $\Gamma$, being an extension of an amenable action.
% {\bf REFERENCE \cite{Zi78} ?????}. 
Let us define $Y=B\times (-\infty,0)$, and let $\nu$ be the restriction of $\eta\times L$ to $Y$, a finite 
%{\bf EXPLANATION WHY FINITE?} 
measure which we normalize to be a probability measure. Then $(Y,\nu)$ is a probability space, and we define the relation $\cR$ on it to be the restriction of the orbit relation $\cO_\Gamma$ defined by $\Gamma$ on $B\times \RR$ to the subset $Y$. Thus $\cR$ is a p.m.p.\@ Borel equivalence relation with countable classes, since $\Gamma$ is countable and preserves the measure $\eta\times L$. The orbit relation $\cO_\Gamma$ is an amenable relation, hence it is hyperfinite, and as a result so is its restriction $\cR$ to the subset $Y$. Finally, if the action of $\Gamma$ on its Poisson boundary $B$ is essentially free, then we can define a cocycle 
$\alpha : \cO_\Gamma \to \Gamma$, by the formula $\alpha(\gamma (b,t),(b,t))=\gamma$. This cocycle is well-defined since the elements in a $\Gamma$-orbit are in bijective correspondence to the elements of $\Gamma$, by essential freeness. Restricting $\alpha$ to $\cR$ we obtain a cocycle from an amenable p.m.p.\@ equivalence relation $\cR$ to $\Gamma$. Furthermore, this cocycle is injective in this case, since 
$\gamma(b,t)=(\gamma b, t-\log r_\eta(\gamma,b))$, so that if $\gamma\neq \gamma^\prime$ then 
$\alpha((b,t), \gamma(b,t))\neq \alpha((b,t), \gamma^\prime(b,t))$. 

Clearly, the class of groups admitting a random walk such that the action on the associated Poisson boundary is essentially free is extensive indeed.  In fact typically many different random walks on a given non-amenable group $\Gamma$ give rise to Poisson boundaries admitting an essentially free action. It is a remarkable feature of the construction of  orbital Rokhlin entropy that it gives one and the same value for the entropy of the $\Gamma$-action on $X$, provided only that this action is ergodic and essentially free, and the value is independent of which cocycle $\alpha : \cR \to \Gamma$ as above was chosen to calculate it. 

{\bf 3)} We now note the following result, communicated to us by A. Kechris.
\begin{Proposition}\cite{Ke17} \label{kechris-1} For any countable infinite group $\Gamma$, there is a class injective cocycle from a hyperfinite p.m.p ergodic equivalence relation $E$ into $\Gamma$. 
\end{Proposition}
\begin{proof} Let $E_0$ be the equivalence relation on $2^{\NN}$ defined by the synchronous tail relation, namely $x E_0 y \iff \exists n \forall m>n\,\, (x_m = y_m)$. This is a hyperfinite relation consisting of an increasing union of Borel equivalence relations $\cR_n$ each with finite classes of equal size, and with respect to the usual measure on $2^{\NN}$, it is p.m.p and ergodic. Given any countable group $\Gamma$, consider the shift action of $\Gamma$ on $2^\Gamma$ and let $\mathcal{O}_\Gamma$ be the orbit relation determined by the action. Denote  by $E_\Gamma$ the induced equivalence relation on the free part of the action $Fr_\Gamma$, namely on the invariant full measure set consisting of points with trivial stabilizer in $\Gamma$. This is again p.m.p, ergodic with respect to the product measure, and in particular it is not measure-theoretically smooth, namely there is no Borel set meeting every class of $E_\Gamma$ in a single point. By a special case of the Glimm-Effros Dichotomy cf.\@ \cite[Theorem~28.6]{KM04}, there is then a Borel injection $f\colon 2^{\NN} \to Fr_\Gamma$ such that $x E_0 y \iff f(x) E_\Gamma f(y)$. The action of $\Gamma$ being free on $Fr_\Gamma$, this injection defines a Borel cocycle $\alpha(x,y)$ form $E_0$ into $\Gamma$, given by $\alpha(x,y) \cdot f(x) = f(y)$. This cocycle is clearly class injective.
\end{proof}

Thus, using the construction described in Lemma \ref{kechris-1}, Theorem~\ref{thm:MAINcocycleentropy} establishes that orbital entropy and orbital Rokhlin entropy can always be defined for a p.m.p.\@ action of a countable group. But furthermore,  we have seen above that for essentially free, ergodic actions of $\Gamma$, our notion of orbital Rokhlin entropy coincides with Rokhlin entropy no matter which class-injective cocycle has been chosen, and no matter which bounded hyperfinite exhaustion for the ergodic hyperfinite relation has been used. 

%However, in this generality, one cannot say too much about the values attained by the cocycle. In the following paragraph, we indicate briefly that in some situations, much more explicit information on the issue is at our disposal.

\subsection{Ergodicity of cocycle extensions}

The Shannon-McMillan-Breiman theorem stated in Theorem \ref{thm:MAIN_SMBhyp}, being a pointwise convergence result for orbital entropy of finite partitions, requires an additional ergodicity assumption for its validity, beyond those sufficient to guarantee the existence of orbital entropy itself. We note that an ergodicity assumption also plays a role in the Shannon-McMillan-Breiman theorem for amenable groups, see \cite{Li01}. To explain the ergodicity consition in question, let us first recall that in \cite[Def. 2.2]{BN15a} a notion of weak-mixing for a cocycle $\alpha$ on p.m.p.\@ relation $\cR$ on $(Y,\nu)$ was defined, as follows. A cocycle $\alpha :\cR \to \Gamma$ is weak-mixing if for {\it every} p.m.p.\@ ergodic action of $\Gamma$ on a spaces $(X,\lambda)$, the extended relation $\cR^X$ on $X\times Y$ is ergodic with respect to the product measure $\lambda \times \nu$. In particular, the relation $\cR$ itself must be ergodic. 
This definition is a natural extension of the notion of weak-mixing for group actions, where the action of $\Gamma$ on a space $(B,\eta)$ (with $\eta$ not necessarily invariant), is called weak-mixing if given any p.m.p.\@ ergodic action of $\Gamma$ on a space $(X,\lambda)$, the product action of $\Gamma$ on $(X\times B, \lambda\times\eta)$ is still ergodic. 

Let us now  mention some specific classes of groups and their actions, for which explicit information on the ergodicity of the cocycle extension can be provided. 
%We refer to \cite{BN13b} and \cite{BN15a} for a detailed discussion of the properties of the notion of weak-mixing cocycle and its variants, and here will only comment briefly on some examples. 

{\bf 1) Amenable groups.} When $\Gamma$ is amenable, referring to the relation $\cR$ and the cocycle $\alpha$ defined in \S \ref{sec:injective}(1), if the p.m.p. $\Gamma$-action on $(Y,\nu)$ is weak-mixing (in the usual sense for group actions), then the $\Gamma$-action on $(X\times Y,\lambda\times\nu)$ is ergodic for {\it every}  ergodic p.m.p.\@ action of $\Gamma$. 
Thus, referring to \S \ref{sec:injective}(1), the cocycle $\alpha : \cR \to \Gamma$ defined there is a weak-mixing cocycle and so here the extended relation $\cR^X$ is ergodic.

{\bf 2) Poisson boundaries. } When $\Gamma$ is non-amenable, the most important source (but not the only one) of weak-mixing actions of a countable group $\Gamma$ is the set of its actions on Poisson boundaries $B=B(\Gamma,\mu)$ \cite{AL05}. These actions are amenable as noted above, and satisfy a stronger condition than weak-mixing, namely double ergodicity with coefficients, see \cite{Ka03}. If $\Gamma$ is a non-amenable group, then the ergodic action on a Poisson boundary $(B,\eta)$ is not measure-preserving, and thus of  type $III$. The action on the Maharam extension $(B\times \RR,\eta\times L)$ is in fact measure-preserving, on a $\sigma$-finite (but not finite) measure space, but the Maharam extension is not necessarily an ergodic action of $\Gamma$.  The possibilities for it are determined by the type of the $\Gamma$-action on $(B,\eta)$, and in particular, if it is of type $III_1$, then the Maharam extension is ergodic. In general, this does not imply that for any ergodic action of $\Gamma$ on $(X,\lambda)$, the Maharam extension of $(X\times B,\lambda \times\eta)$ is ergodic.  If indeed this is the case for {\it every} p.m.p.\@ action of $\Gamma$, then the action of $\Gamma$ on $(B,\eta)$ is defined in \cite{BN13b} to have stable type $III_1$. 

Assume that the $\Gamma$-action on $(B,\eta)$ is essentially free, and let $(Y,\nu)$, $\cR$ and $\alpha$ be as defined  \S \ref{sec:injective}(2). Then the cocycle $\alpha :\cR\to \Gamma$ is injective, defined on a p.m.p.\@ amenable relation, and if the type of the $\Gamma$-action on $(B,\eta)$ is $III_1$ it is ergodic. If, furthermore, the $\Gamma$-action on $(B,\eta)$ has stable type $III_1$, then the cocycle $\alpha$ is weak-mixing, and hence $\cR^X$ is ergodic for every p.m.p.\@ action of $\Gamma$. Thus all the assumptions of the Shannon-McMillan-Breiman theorem are verified in this case, for which examples will be provided below.

{\bf 3) Non-trivial stable type.} If the type of the $\Gamma$-action on $(B,\eta)$ is $III_\tau$ for some $\tau > 0$ then the action of $\Gamma$ on the Maharam extension has a set of ergodic components admitting a free transitive action of  the circle group $\RR/\ZZ \cdot\log \tau $. The circle group acts on the Maharam extension and commutes with the $\Gamma$-action. If this is also the situation for the Maharam extension of all the spaces $(X\times B, \lambda\times \eta)$ for {\it every} ergodic p.m.p.\@ action of $\Gamma$, then the $\Gamma$-action on $(B,\eta)$ is defined in \cite{BN13b} to have stable type $\tau$. In that case, it is also possible to prove a version of the Shannon-McMillan-Breiman pointwise convergence theorem, which applies, rather than to the information functions we defined, to a further  average of them. In particular, this provides a proof of the Shannon-McMillan mean-convergence theorem in our context.  In the interest of brevity, however, we shall provide the details elsewhere. We refer to \cite{BN13b} and \cite{BN15a} for to a detailed discussion of type, stable type and Maharam extensions in the context of pointwise ergodic theorems for group actions.

{\bf 4) Ergodicity and mixing conditions.} Given an injective cocycle $\alpha : \cR \to \Gamma$ on an ergodic p.m.p.\@ amenable relation and a p.m.p.\@ action of $\Gamma$ on $(X,\lambda)$, it is possible to develop criteria to show that the extended relation $\cR^X$ is ergodic,   provided that the $\Gamma$-action on $X$ satisfies additional ergodicity or mixing conditions. This implies that the Shannon-McMillan-Breiman theorem is valid for $\Gamma$-actions on a suitable class of p.m.p.\@ actions on $(X,\lambda)$. 
A simple example of this phenomenon arises for the p.m.p.\@ actions of  finitely generated non-abelian free groups $\FF_r$. 
Here, taking the boundary $(\partial \FF_r, \nu)$ with the uniform measure $\nu$, we construct a cocycle $\alpha : \cR \to \FF_r$, where $\cR$ is an amenable p.m.p.\@ relation on $\partial \FF_r$, which is not ergodic, but in fact has exactly two ergodic components. If $(X,\lambda)$ is a p.m.p.\@ ergodic action of $\FF_r$ for which the index $2$ subgroup of even length words is ergodic, then the extended relation $\cR^X$ is an ergodic relation, see \cite{BN13a} for a detailed exposition. Hence these actions of $\FF_r$ satisfy the 
Shannon-McMillan-Breiman theorem.  We will give a detailed exposition of this case, which will also demonstrate the geometric significance of the theorem, in the next section.

%$\alpha  If the $\Gamma$-action on $(B,\eta)$ has type $III_\tau$ for some $\tau \neq 0$, then for every p.m.p. action of $\Gamma$ on $(X,\lambda)$, the extended relation $\cR^X$ is ergodic, provided that the action of $\Gamma$ on $(X,\lambda)$ is a mildly-mixing action. {\bf REFERENCE : \cite{BNxx} ????? Aaronson Lemanczyk}
%

To conclude this section let us  note the following results concerning type and stable type, which are relevant to the foregoing discussion. 
\begin{Examples}
\begin{enumerate}

%\item Let $\Gamma$ denote a countable group, let $\mu$ be a generating probability measure of finite entropy on $\Gamma$, and let $\nu$ denote the $\mu$-stationary probability 
%measure on the standard Borel $\Gamma$-space $(\cP(\Gamma,\mu), \nu)$ which is the Poisson boundary of $P(\Gamma, \mu)$. This action is non-singular and amenable, although not necessarily essentially free. It is  
%also ergodic and in fact weakly mixing  
%namely the product action of $\Gamma$ on $(\times X, \nu\times \mu)$ is $\Gamma$-ergodic, whenever $(X,\mu)$ is a p.m.p. ergodic action of $\Gamma$,
 %by double ergodicity, see........
 
\item Let $\Gamma$ be an irreducible  lattice in a connected semisimple Lie group with finite center and without compact factors. Then the action of $\Gamma$ on the maximal boundary $(G/P,m)$, where $P$ is a minimal parabolic subgroup and $m$ is the Lebesgue measure class, is amenable and has {\it stable type} $III_1$, see \cite{BN13b}. 
\item Let $\Gamma$ be a discrete non-elementary subgroup of isometries of real hyperbolic space. By %\cite{Su78} 
\cite[Thm 6]{Su82}, the type of the action of $\Gamma$ on the boundary of hyperbolic space with respect to the Lebesgue measure class is $III_1$, when restricted to the recurrent part. In \cite{Sp87} this result was proved for the action of fundamental groups of compact connected negatively curved manifolds acting on the visual boundary with the manifold measure class. 

\item Let $\Gamma$ be a finitely generated free group. By \cite{RR97}, the action of $\Gamma$ has non-trivial type with respect to harmonic measures, namely stationary measures of suitable random walks. For  $\Gamma$ being a word hyperbolic group, it was proved in \cite{INO08} that the action of $\Gamma$ on the Poisson boundary associated with a generating measure of finite support has non-trivial type. 
\item In \cite{Bo14} it was proved that for word hyperbolic groups (with an additional technical condition) the action on the Gromov boundary with respect to a quasi-conformal measure (and in particular the Patterson-Sullivan measure) measure has non-trivial {\it stable type}.\end{enumerate}
\end{Examples}

\subsection{Ergodicity of the extended relation for strongly mixing actions}\label{kechris-Vaes}

We now turn to show that in fact the Shannon-McMillan-Breiman theorem holds for {\it all} free strongly mixing actions of {\it all} countable groups, 
provided the equivalence relation $\cR$ and the cocycle $\alpha$ are chosen suitably. We have seen in Theorem~\ref{thm:MAIN_SMBhyp} that it suffices to ensure that the extended relation is ergodic. Choosing $\cR$ and $\alpha$ suitably, this will indeed be the case, and this fact relies on the following result, communicated to us by A. Kechris.

\begin{Proposition}\cite{Ke17}\label{Kechris-2}
For every p.m.p. free action $(X,\mu)$ of a countable group $\Gamma$ which is strongly mixing, there exists a p.m.p. ergodic hyperfinite relation $\cR$ and a class injective cocycle $\alpha : \cR\to \Gamma$, such that the extended relation $\cR^X$ is ergodic.  
\end{Proposition}
\begin{proof}  Consider any ergodic p.m.p.\@ action $(X,\mu)$ of any countable group $\Gamma$, and the associated equivalence relation $\mathcal{O}_\Gamma$ determined by the orbits of $\Gamma$ in $X$. It is well known (see e.g. \cite[Theorem~3.5]{Ke10a}) that   $\mathcal{O}_\Gamma$ contains as a subrelation the orbit relation of a free measure-preserving action of a suitable element $T$ in the full group. Since $T$ is orbit equivalent to a free ergodic action of the group $\bigoplus_\ZZ \ZZ_2$ (e.g. by \cite{CFW81}), it follows that $\mathcal{O}_\Gamma$ also contains an ergodic hyperfinite p.m.p.\@ relation $\cR$.
Clearly, if the $\Gamma$-action on $X$ is free, then there is a class injective cocycle $\alpha : \cR\to \Gamma$, given by $\alpha (\gamma x,x)=\gamma$, whenever $(\gamma x,x)\in \cR$. As already noted, by \cite{CFW81} the relation $\cR$ is generated by a p.m.p. free action of $\ZZ$, and therefore using Dye's theorem \cite{Dy59} on the orbit equivalence of any two free p.m.p. actions of $\ZZ$, we can assume without loss of generality that $\cR$ is  generated by a strongly mixing free p.m.p. action of $\ZZ$ on $(X,\mu)$. Thus there exists a strongly mixing p.m.p.\@ map $T : X \to X$ whose orbits in $X$ are precisely the equivalence classes of $\cR$. The extended relation $\cR^X$ on $X\times X$ using the cocycle $\alpha$ that we have used throughout is then also generated by a p.m.p.  action of $\ZZ$ on $X\times X$, given by 
$T_X(x,x^\prime)=(Tx, \alpha (Tx,x)x^\prime)$. Now note that if the original $\Gamma$-action on $(X,\mu)$  is strongly mixing, namely the matrix coefficients vanish at infinity in $\Gamma$, then the extended action of $\ZZ$ on $X\times X$ given by the action of $T_X$ must be strongly mixing as well.  It suffices to check this fact on a dense subspace of functions in $L^2(X\times X)$, and we choose this dense subspace to consist of product functions $f(x)f^\prime(x^\prime)$, with $f,f^\prime \in L^2(X)\cap L^\infty(X)$.
Then 
$$\inn{(T_X)^n (ff^\prime),gg^\prime}=\int_{X\times X} f(T^nx)f^\prime(\alpha(T^n x,x)x^\prime)g(x)g^\prime(x^\prime)d\lambda(x)d\lambda(x^\prime)$$
$$=\int_{X} f(T^nx)g(x)\left(\int_X f^\prime(\alpha(T^n x,x)x^\prime)g^\prime(x^\prime)d\lambda(x^\prime)\right)d\lambda(x)$$
Note that for any given $T$-orbit in $X$, namely for each given class of $\cR$, the cocycle $\alpha$ is class injective, and hence the values of the cocycle $\alpha$ on that class must converge to infinity in $\Gamma$. It follows for almost any $x\in X$, $\alpha(T^n x,x)$ converges to infinity in $\Gamma$. By strong mixing of $\Gamma$ on $X$ the inner integral  converges to $\int_X f^\prime g^\prime d\lambda(x^\prime)$, and by Lebesegue dominated convergence theorem, the doubly integral then converges to 
$\int_{X\times X}ff^\prime \cdot  \int_X gg^\prime$, so that $T_X$ is strongly mixing. 

Finally, having shown that $T_X$ acting on $X\times X$ is strongly mixing, we can certainly conclude that it is ergodic, and it follows that the equivalence relation determined by the $T_X$-orbits in $X\times X$ is an ergodic relation. By construction this relation is precisely the extended relation associated $\cR^X$ associated with the cocycle $\alpha$, so that the extended relation is indeed ergodic. 

\end{proof}

\subsection{Ergodicity of the extended relation via weakly mixing Bernoulli actions}
In a recent paper \cite{VW17}, Vaes and Wahl show that there is a very large collection of countable groups $\Gamma$ which
admit a nonsingular Bernoulli action of type $III_1$. They verify this for instance for groups containing at least one
element of infinite order and conjecture that $\Gamma$ admits a nonsingular Bernoulli action $\Gamma \curvearrowright (B,\eta)$ of type $III_1$
if and only if the first $L^2$-cohomology of $\Gamma$ vanishes.  
These Bernoulli actions give rise to essentially free, p.m.p.\@ and {\it weakly mixing} Maharam extensions, 
say $\Gamma \curvearrowright (B \times \mathbb{R},\eta \times e^{t} dt)$,
see \cite[Theorems~5.1 and~6.1]{VW17}. The weak mixing condition implies that for any p.m.p. ergodic action of $\Gamma$ on $(X,\mu)$, the action of $\Gamma$ on $(X\times B \times \RR, \mu\times \eta\times e^t dt)$ is ergodic. Now restrict the orbit relation of $\Gamma$ in this action to the probability space $X\times Y$, where $Y:= B \times (-\infty,0)$ and $\nu := \eta \times e^{t} dt$, which we denote by 
$\cS=\cS(X,B)$. 
%As we have already seen above, we can consider the subequivalence relation
%of this Maharam extension of induced on the probability measure space $(Y,\nu)$,  .
%So if $\Gamma$ acts p.m.p.\@ and ergodically on a probability space
%$(X,\mu)$, then the diagonal action $\Gamma \curvearrowright (X \times Y, \mu \times \nu)$ is essentially free, p.m.p.\@ and 
%ergodic. Clearly, all those properties carry over to the associated 

As we saw in the proof of Lemma \ref{kechris-1}  % Now by general theory (cf.\@ \cite[Theorem~3.5]{Ke10a}), 
the relation $\cS$
admits an ergodic, hyperfinite subequivalence relation $\cS_0=\cS_0(X,B)$. By freeness of the original $\Gamma$-action on $X$, we obtain a canonical cocycle defined over the hyperfinite p.m.p.\@ ergodic equivalence
relation $\cS_0$, given by  $\alpha: \cS_0 \to \Gamma$, where $\alpha\big( (x,y), (x^{\prime}, y^{\prime}) \big) = g$ for the
unique element $g \in \Gamma$ which satisfies $g \cdot (x^{\prime}, y^{\prime}) = (x,y)$. This way, the 
ergodicity assumption required for the validity of the convergence result appearing in the Shannon-McMillan-Breiman theorem, cf.\@ Theorem~\ref{thm:MAIN_SMBhyp},
is guaranteed to hold. Note however, that the present set-up is different than the set-up considered previously, since the relation $\cS_0$  and the cocycle $\alpha$ depend on $X$ and $B$. Here we do not establish  ergodicity
of the extended relation via properties of one fixed auxiliary equivalence relation $\cR$ and cocycle $\alpha$. Rather, we obtain the ergodicity of $\cS_0$ 
directly, as an equivalence relation on $(X \times Y, \mu \times  \nu)$, using weak-mixing of $B\times \RR$. This argument, unlike our previous one, cannot be used in conjunction with the subadditive principle, and therefore the limit in the convergence result that it provides is not shown to be independent of the cocycle $\alpha$ and the space $B$.

%Thus for free strongly mixing p.m.p. actions of $\Gamma$, the Shannon-McMillan-Breiman theorem holds for any injective cocycle $\alpha :\cR \to \Gamma$. 

\section{The free group} \label{sec:freegroup}

%In the present section we will compute the entropy associated with measure-preserving actions of free non-commutative groups. 
\subsection{The boundary of the free group}
We briefly describe the ingredients we need for our analysis, following the exposition of \cite{BN13a}. 

Let $\FF=\langle a_1,\dots ,a_r \rangle$ be the free group of rank $r\ge 2$, with $S=\{a_i^{\pm 1}:~1\le i \le r\}$ a free set of generators. The (unique) {\em reduced form} of an element $g\in \FF$ is the expression  $g=s_1\cdots s_n$ with $s_i \in S$ and $s_{i+1}\ne s_i^{-1}$ for all $i$. Define $|g|:=n$, the length of the reduced form of $g$. The distance function on $\FF$ is defined by $d(g_1,g_2):=|g_1^{-1}g_2|$. The Cayley graph associated with the generating set $S$ is a regular tree of valency $2r$, and $d$ coincides with its edge-path distance.

The boundary of $\FF$ is the set of all sequences $\xi=(\xi_1,\xi_2,\ldots) \in S^\NN$ such that $\xi_{i+1} \ne \xi_i^{-1}$ for all $i\ge 1$. We denote it by $\partial \FF$. A metric $d_\partial$ on $\partial \FF$ is defined by $d_\partial\big( (\xi_1,\xi_2, \ldots), (t_1, t_2, \ldots) \big) = \frac{1}{n}$ where $n$ is the largest  natural number such that $\xi_i=t_i$ for all $i < n$. If $\{g_i\}_{i=1}^\infty$ is any sequence of elements in $\FF$ and $g_i:=t_{i,1}\cdots t_{i,n_i}$ is the reduced form of $g_i$ then $\lim_i g_i = (\xi_1,\xi_2,\ldots) \in \partial \FF$ if $t_{i,j}$ is eventually equal to $\xi_j$ for all $j$. If $\xi \in \partial \FF$ then we will write $\xi_i \in S$ for $i$-th element in the sequence  $\xi=(\xi_1,\xi_2,\xi_3,\ldots)$.

We define a probability measure $\nu$ on $\partial \FF$, by the requirement that every finite sequence $t_1,\ldots, t_n$ with $t_{i+1} \ne t_i^{-1}$ for $1\le i <n$, the following holds :
$$\nu\Big(\big\{ (\xi_1,\xi_2,\ldots) \in \partial \FF :~ \xi_i=t_i ~ 1\le i \le n\big\}\Big) := |S_n(e)|^{-1}=(2r-1)^{-n+1}(2r)^{-1}.$$
%By the Carath\'eodory extension theorem, this uniquely defines a Borel probability measure $\nu$ on $\partial \FF$.

There is a natural action of $\FF$ on $\partial \FF$ by 
$$(t_1\cdots t_n)\xi := (t_1,\ldots,t_{n-k},\xi_{k+1},\xi_{k+2}, \ldots)$$ where $t_1,\ldots, t_n \in S$,  $t_1\cdots t_n$ is in reduced form and $k$ is the largest number $\le n$ such that $\xi_i^{-1} = t_{n+1-i}$ for all $i\le k$.  Observe that if $g=t_1 \cdots t_n$ then the Radon-Nikodym derivative satisfies
$$\frac{d\nu \circ g}{d\nu}(\xi) = (2r-1)^{2k-n}.$$

%\subsection{The horospherical and synchronous tail relations}
\subsection{The horospherical relation and the fundamental cocycle}\label{sec:free group review}
Let $\cR$ be the equivalence relation on $\partial \FF$ given by $(\xi,\eta) \in \cR$ if and only when writing $\xi=(\xi_1,\xi_2,\ldots)$ and $\eta=(\eta_1,\eta_2,\ldots)$, there exists $n$ such that $\eta_i = \xi_i$ for all $i> n$. Thus $\eta\cR \xi$ if and only if $\eta$ and $\xi$ have the same synchronous tail, if and only if they differ by finitely many coordinates only.

 Let $\cR_n$ be the equivalence relation given by $(\xi,\eta)\in \cR_n$ if and only if $\xi_i=\eta_i~\forall i > n$. Then $\cR$ is the increasing union of the finite subequivalence relations $\cR_n$. Thus $\cR$ is a hyperfinite relation.

Consider the relation $\cR^\prime$ on $\partial \FF$  such that $\eta\cR^\prime \xi$  if and only if there is a $g \in \FF$ such that $g\xi = \eta$ and $\frac{d\nu \circ g}{d\nu}(\xi) = 1$.  Note that the level set of the Radon-Nikodym derivative, namely $\set{g\in \FF\,;\, \frac{d\nu \circ g}{d\nu} (\xi) = 1}$ is the horosphere in the Cayley tree based 
at $\xi$ and passing through the identity in $\FF$. 
%Thus it is natural to call $\cR^\prime$ the horospherical equivalence relation : $\eta\cR^\prime \xi$ if and only if $\eta =g\xi$, where $g^{-1}$ belongs to the horosphere based at $\xi$ passing through the identity. 
 Note that in that case $\xi=g^{-1}\eta$, and $\frac{d\nu\circ g^{-1}}{d\nu}(\eta)=\left(\frac{d\nu\circ g}{d\nu}(\xi)\right)^{-1}=1$, so that the relation is indeed symmetric. The transitivity of the horospherical relation $\cR^\prime$ follows from the cocycle identity which the Radon-Nikodym derivative satisfies. 
Thus $\cR^\prime$ is an equivalence relation, and by definition, the measure $\nu$ is $\cR^\prime$-invariant. 

The relation $\mathcal{R}^{\prime}$ on $\partial\mathbb{F}_{r}$ can also be defined more concretely by the condition that $\left(\xi,\eta\right)\in\mathcal{R}^{\prime}$ iff there
exists $k$ s.t. $\eta=g\xi$ and $g=\eta_{1}\cdot...\cdot\eta_{k}\cdot\xi_{k}^{-1}\cdot...\cdot\xi_{1}^{-1}$.
It follows that $\eta=g\xi$ has the same synchronous tail as $\xi$
from the $k+1$-th letter onwards. Equivalently, $g^{-1}$ belongs
to the horosphere based at $\xi$ and passing through the identity
in $\mathbb{F}_{r}$, namely the geodesic from $g^{-1}$ to $\xi$
and the geodesic from $e$ to $\xi$ meet at a point which is equidistant
from $e$ and $g^{-1}$. Thus it is natural to call $\mathcal{R}^{\prime}$ the \emph{horospherical
relation}
%: $\left(\xi,\eta\right)\in\mathcal{R}^{\prime}$ iff $\eta=g\xi,$
%where $g^{-1}$ belongs to the horosphere based at $\xi$ passing
%through the identity. Similarly, it is natural to call 
and the equivalence
class of $\xi$ under $\mathcal{R}_{n}$ the horospherical ball of
radius $n$ based at $\xi$. Since $\xi$ and $\eta=g\xi$ have the
same synchronous tail, $\mathcal{R}^{\prime}$ coincides with the
synchronous tail relation $\mathcal{R}$. 

The {\it fundamental cocycle} of the tail relation is the measurable map 
$\alpha : \cR \to \FF_r$ given, for $\eta=(\eta_1,\dots,\eta_k,\dots)$ 
and $\xi=(\xi_1,\dots,\xi_k,\dots)$ (with $(\eta,\xi)\in \cR_k$), by  
$$\alpha(\eta,\xi)=\eta_{1}\cdot...\cdot\eta_{k}\cdot\xi_{k}^{-1}\cdot...\cdot\xi_{1}^{-1}$$
so that $\alpha(\eta,\xi)\xi=\eta$. The cocycle takes values in the subgroup $\FF_r^{e}$ consisting of words of even length, and more precisely for each $k$ the set of values $\alpha(\eta, \xi)$ for fixed $\eta$ and $\xi\in\cR_k(\eta)$ coincides with the intersection of the word metric ball $B_{2k}(e)$ with the horoball  based at $\eta$ and passing through $e$. This set is called the horospherical ball of radius $2k$ determined by $\eta$ and denoted by $B_{2k}^\eta$. 

Note further that the set of cocycle values $\alpha(\xi, \eta)$ for $\xi\cR_k \eta$ with $\eta$ fixed, namely the set $\left(B_{2k}^\eta\right)^{-1}$, is given by 
$$\alpha(\xi,\eta)=\xi_{1}\cdot...\cdot\xi_{k}\cdot \eta^{-1}_{k}\cdot...\cdot\eta^{-1}_{1}$$
and this set
 contains words of length at most $2k$ whose first $(k-1)$ letters can be specified arbitrarily. 

Finally, consider the set of values of the cocycles $\alpha(\eta,\xi)$ for all points $\eta\neq \xi$ which are $\cR_k$ equivalent to another, i.e.\@ as we go over all pairs of distinct points in some equivalence class of the form 
$\cR_k(\zeta)$. This set of values clearly contains all the even words in a ball of radius $2k-2$, i.e. 
$B_{2k-2}(e)\cap\FF_r^e$.

The {\it finite order automorphisms} of $\cR$ is the subgroup $\Phi$ of $\cup_{n\in \NN} [\cR_n]$ generated by the transformations defined as follows.  Let $\pi_{n}:\partial\mathbb{F}_{r}\to S^{n}$ be the projection given by  $\pi_{n}\left(s_{1},s_{2},...\right)=\left(s_{1},s_{2},...,s_{n}\right)$.
We say that a map $\psi:\partial\mathbb{F}_{r}\to\partial\mathbb{F}_{r}$
has \emph{order $n$} if $\psi\left(\xi\right)=\psi\left(\xi^{\prime}\right)$
for any two boundary points $\xi,\xi^{\prime}\in\partial\mathbb{F}_{r}$
with $\pi_{n}\left(\xi\right)=\pi_{n}\left(\xi^{\prime}\right)$.

For any $\left(\xi,\xi^{\prime}\right)\in\mathcal{R}$ there exists
a map $\phi\in \Phi$ such that $\phi\left(\xi\right)=\xi^{\prime}$
and $\phi$ has order $n$ for some $n\in\mathbb{N}$, see \cite{BN13a}. 
Thus the group of finite order automorphisms clearly generates the synchronous tail (i.e.\@ the horospherical) relation.

{\it  The extended horospherical relation.} %{\bf NOTE : the product here in this argument is reversed !!!!!!!!!!!!!!!}
Let $\FF$ act by measure-preserving transformations on a probability space $(X,\lambda)$. Let $\cR_n^X$ be the equivalence relation on $ X\times \partial \FF$ defined by $((x,\xi),(x^\prime,\xi^\prime)) \in \cR_n^X$ if and only if there exists $g\in \FF$ with $(gx,g\xi)=(x^\prime,\xi^\prime)$ and $(\xi,\xi') \in \cR_n$ (i.e., if $\xi=(\xi_1,\ldots)\in S^\NN$ and $\xi'=(\xi'_1,\ldots)\in S^\NN$ then $\xi_i=\xi'_i$ for all $i\ge n$).

Inspecting the definitions, we see that the extended horospherical relation on $X\times \partial \FF_r$ coincides with the extension of the horospherical (i.e.\@ synchronous tail) relation $\cR$ on $\partial \FF_r$ via the fundamental cocycle $\alpha : \cR \to \FF_r$ defined above.

It is easy to see that for any $(y,\xi) \in  X\times \partial \FF$, 
$$\abs{\cR^X_n(y,\xi))} = \abs{\cR_n(\xi)} = (2r-1)^n.$$
The relation $\cR^X=\bigcup_{n \ge 1} \cR^X_n$ is thus a hyperfinite measurable equivalence relation, it preserves the measure $\nu\times \lambda$, and it is {\it uniform}, namely for each $n \ge 1$ almost every equivalence class of the relation $\cR_n^X$ has the same cardinality.

\subsection{Shannon-McMillan-Breiman theorem for the free groups}

Let $\mathcal{R}^{X}$ be the equivalence relation on $X\times \partial \FF_r$.
We may assume the action of $\mathbb{F}_{r}$ on $\left(X,\lambda\right)$
is ergodic, and we will use the following. 
\begin{Theorem}\cite{BN13a}\label{F^2-ergodic}. If $\mathbb{F}_{r}^{e}$ acts on $(X,\lambda)$
ergodically, then the diagonal action $\mathbb{F}_{r}^{e}\curvearrowright X\times \partial\mathbb{F}_{r}$
is ergodic. 
\end{Theorem}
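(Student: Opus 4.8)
The plan is to deduce the stated ergodicity from the weak mixing of the boundary action of $\FF_r^e$, exploiting the standard principle that the product of a weakly mixing action with an arbitrary ergodic p.m.p.\ action is again ergodic. Concretely, let $F \in L^\infty(X \times \partial\FF_r, \lambda \times \nu)$ be invariant under the diagonal $\FF_r^e$-action; the goal is to show that $F$ is $\lambda\times\nu$-a.e.\ constant. Since the fundamental cocycle $\alpha$ takes values in $\FF_r^e$, the synchronous-tail transformations of $\partial\FF_r$ are realized by even group elements, so invariance of $F$ under these already forces $F$ to be measurable with respect to the extended tail $\sigma$-algebra, i.e.\ to depend on the boundary point only through its synchronous tail. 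The remaining task is to eliminate this residual tail dependence and then invoke ergodicity of $\FF_r^e \curvearrowright (X,\lambda)$ to conclude that $F$ is constant.

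The mechanism I would use is the explicit tree structure of $(\partial\FF_r, \nu)$. For reduced words $u,v$ of equal length $n$, the element $g_{u,v} := v u^{-1} \in \FF_r^e$ has even reduced length and Radon--Nikodym derivative identically $1$ on the cylinder $C_u$, and it maps $C_u$ onto $C_v$ by replacing the prefix $u$ with $v$ while preserving the synchronous tail; thus the family $\{g_{u,v}\}$ acts measure-preservingly and transitively on prefixes within each tail-fibre. Combining these tail-preserving prefix swaps with a tail-changing (shift-type) even element, and using that the boundary Markov chain is mixing with trivial tail field, one removes first the prefix dependence and then the residual tail dependence of $F$, leaving an $\FF_r^e$-invariant function of $x$ alone, which is constant by hypothesis. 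Equivalently, this is precisely the assertion that $\FF_r^e \curvearrowright (\partial\FF_r, \nu)$ is weakly mixing, which I would derive from the double ergodicity with coefficients of the $\FF_r$-boundary action (cf.\ \cite{Ka03}, \cite{AL05}) passed to the index-two subgroup $\FF_r^e$.

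The main obstacle is that the boundary action is not measure preserving (it is of type $III$), so weak mixing cannot be extracted from a naive Koopman argument and must instead be handled through the Radon--Nikodym cocycle and double ergodicity with coefficients. A second, closely related difficulty is the passage from $\FF_r$ to the finite-index subgroup $\FF_r^e$: the length-parity homomorphism produces a $\ZZ/2$ obstruction, which is exactly why the base relation decomposes into two ergodic components and why the correct hypothesis is ergodicity of $\FF_r^e$ (rather than of all of $\FF_r$) on $X$. Keeping track of this parity while simultaneously exploiting the tail-preserving prefix swaps and the tail-changing shifts is the crux of the argument, and the detailed bookkeeping is the content carried out in \cite{BN13a}.
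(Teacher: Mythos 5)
Your high-level plan --- reduce the statement to weak mixing of $\FF_r^e \curvearrowright (\partial\FF_r,\nu)$, where weak mixing of a nonsingular action is understood as in Section 7 of the paper (its product with every ergodic p.m.p.\@ action is ergodic) --- is the same as the paper's, which obtains the theorem from \cite{BN13a} together with exactly this weak mixing property. However, both of your proposed derivations of that weak mixing contain genuine gaps. In the hands-on argument, invariance of $F$ under the diagonal action of the prefix swaps $g_{u,v}$ does not make $\xi \mapsto F(x,\xi)$ tail-measurable for fixed $x$; it only makes $F$ invariant under the extended relation $\cR^X$, i.e.\@ $F\big(\alpha(\eta,\xi)x,\eta\big)=F(x,\xi)$, in which the $X$-coordinate is twisted by the cocycle. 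Consequently tail triviality of the boundary Markov chain (a statement about functions of $\xi$ alone) cannot be used to ``remove the residual tail dependence''; that is precisely the step where all the content lies. A cocycle extension of an ergodic relation by an ergodic p.m.p.\@ action need not be ergodic --- coboundary-type obstructions are exactly what the length-parity phenomenon illustrates --- so ergodicity of the boundary relation plus ergodicity of $\FF_r^e$ on $X$ cannot formally yield ergodicity of $\cR^X$ or of the diagonal action. (Relatedly, your appeal to a trivial tail field is in tension with your claim that the base relation has two ergodic components: the invariant sets of the synchronous tail relation are exactly the tail events of the chain, so the two statements cannot both hold for the same relation; the two-component phenomenon concerns extensions, not the boundary relation itself.)

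The citation route has a more specific gap: weak mixing and double ergodicity with coefficients do not pass formally from $\FF_r$ to the index-two subgroup $\FF_r^e$; these properties are quantified over product actions (resp.\@ equivariant maps) for the acting group, and restriction to a finite-index subgroup can fail. The missing observation --- and it is exactly how the paper and \cite{BN13a} proceed --- is that $(\partial\FF_r,\nu)$ is the Poisson boundary of $\FF_r^e$ in its own right: it is naturally identified with $B(\FF_r^e,\mu_2)$, where $\mu_2$ is the step distribution, uniform on $S^2$, of the even-time random walk. Once this identification is made, the theorem of Aaronson and Lemanczyk \cite{AL05} (or double ergodicity with coefficients \cite{Ka03}, now applied directly to $\FF_r^e$ acting on its own Poisson boundary) gives weak mixing of the $\FF_r^e$-action, and the stated theorem follows at once. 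Your plan becomes correct after replacing ``pass double ergodicity to the subgroup'' by this re-identification of the boundary as an $\FF_r^e$-Poisson boundary.
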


\noindent{\bf The type of the boundary action.}
The type of the action $\FF_r\curvearrowright  (\partial \FF_r, \nu)$ is $III_\tau$ where $\tau=(2r-1)^{-1}$. It is follows from  \cite[Theorem 4.1]{BN13a} that the stable type of $\FF_r\curvearrowright  (\partial \FF_r, \nu)$ is $III_{\tau^2}$. In fact, if $\FF_r^e$ denotes the index 2 subgroup of $\FF$ consisting of all elements of even word length then $\FF_r^e\curvearrowright  (\partial \FF_r,\nu)$ is  of type $III_{\tau^2}$ and stable type $III_{\tau^2}$. It is also weakly mixing. Indeed, $(\partial \FF_r,\nu)$ is naturally identified with $B(\FF_r,\mu)$, the Poisson boundary of the random walk generated by the measure $\mu$ that is distributed uniformly on $S$.  By \cite{AL05}, the action of any countable group $\Gamma$ on the Poisson boundary $B(\Gamma,\kappa)$ is weakly mixing whenever the measure $\kappa$ is adapted. This shows that $\FF_r\curvearrowright  (\partial \FF_r,\nu)$ is weakly mixing. Moreover, if we denote $S^2 = \{st:~s, t \in S\}$, then $(\partial \FF_r,\nu)$ is naturally identified with the Poisson boundary $B(\FF_r^e, \mu_2)$ where $\mu_2$ is the uniform probability measure on $S^2$. So the action $\FF_r^e \curvearrowright  (\partial \FF,\nu)$ is also weakly mixing.

%Rather than form the Maharam extension of the action $\FF \cc (\partial \FF,\nu)$ it is more convenient and fruitful to consider the Maharam extension of the action $\FF_2 \cc (\partial \FF,\nu)$. 

Let $\FF_r$ act on $\partial \FF_r \times \ZZ$ by $g(b,t) = (gb, t-\log r_{\nu}(g,b))$, which gives the discrete Maharam extension in this case. 
%Although it is possible to use the previous Theorem to obtain an equivalence relation on $\partial \FF \times \{0,1\}$ and a cocycle, it is more fruitful to consider a slightly different construction. 
Let $\cR$ be the orbit-equivalence relation restricted to $\partial \FF_r \times \{0\}$, which we may, for convenience, identify with $\partial \FF_r$. In other words, $b \cR b'$ if and only if there is an element $g \in \FF_r$ such that $gb=b'$ and $\frac{d\nu \circ g}{d\nu}(b)=1$. As noted above, this is the same as the (synchronous) tail-equivalence relation on $\FF_r$. In other words, two elements $\xi=(\xi_1,\xi_2,\ldots)$, $\eta=(\eta_1,\eta_2,\ldots) \in \partial \FF_r$ are $\cR$-equivalent if and only if there is an $m$ such that $\xi_n=\eta_n$ for all $n \ge m$. But now note that if $b\cR (gb)$, then necessarily $g \in \FF_r^e$. So $\cR$ can also be r%So
%\begin{eqnarray*}
%| \cR^X_n(x,\xi) \setminus \cR^X_{n-1}(x,\xi)| &=& | \cR_n(\xi) \setminus \cR_{n-1}(\xi)|= (2r-1)^{n-1}(2r-2)\\
%  &=& \frac{2r-2}{2r-1} | \cR_n(\xi)| = \frac{2r-2}{2r-1} | \cR^X_n(x,\xi)|.
%  \end{eqnarray*}
%%  \end{lem}egarded as the orbit-equivalence relation for the $\FF_r^e$-action on $\partial \FF_r \times \ZZ$ restricted to $\partial \FF_r \times \{0\}$. 

Let $\alpha:\cR \to \FF_r^e$ be the cocycle $\alpha(gb,b)=g$ for $g\in \FF_r^e, b \in \partial \FF_r$. This is well-defined almost everywhere because the action of $\FF_r^e$ is essentially free. As noted above, given two $\cR_k$-equivalent points $\xi,\eta\in \partial \FF_r$ as above, the cocycle value  is given by 
$\alpha(\eta,\xi)=\eta_{1}\cdot...\cdot\eta_{k}\cdot\xi_{k}^{-1}\cdot...\cdot\xi_{1}^{-1}$. 

Because $\FF_r^e \curvearrowright  (\partial \FF_r,\nu)$ has type $III_{\tau^2}$ and stable type $III_{\tau^2}$, this cocycle is weakly mixing for $\FF_r^e$. In other words, if $\FF_r^e \curvearrowright (X,\mu)$ is any ergodic p.m.p action, then the equivalence relation $\cR^X$ defined on $X\times \partial \FF_r $ by the cocycle extension is ergodic. Since the relation is hyperfinite and the cocycle is injective, we conclude that the action of $\FF_r$ on any ergodic p.m.p.\@ space satisfies the Shannon-McMillan-Breiman theorem, provided only that $\FF_r^e$ acts ergodically on $X$. Applying Theorem  \ref{thm:MAIN_SMBhyp}, we conclude :   
\begin{Theorem}\label{free gps2} 
Let $(X,\mu)$ be any essentially free p.m.p. action of the free group $\FF_r$, $r\ge 2$. Assume that the action is ergodic under the index $2$ subgroup of even words $\FF_r^e$. Fix any finite partition $\cP$ of $X$. For a boundary point $\eta \in \partial \FF_r$, consider the sequence of partitions obtained from $\cP$ by refining it along the horospherical balls $B_{2k}^\eta$ determined by $\eta$ : 
$$\bigvee_{g\in B_{2k}^\eta}g \cP=\bigvee_{\xi\in \cR_k(\eta)}\alpha(\xi,\eta)^{-1}\cP\,.$$
Then the information functions of the refined partitions converge to the orbital entropy (of the partition $\cP$) for the action of $\FF_r$ on $X$, for $\mu$-almost every $x\in X$, for $\nu$-almost every $\eta \in \FF_r$. 
\end{Theorem}

\end{document}